\documentclass[final,12pt]{elsarticle}

\journal{Discrete Optimization}

\usepackage{amsmath,amssymb,amsfonts,amsthm}
\usepackage{graphicx}
\usepackage{bm}
\usepackage{enumitem,xcolor}
\usepackage[hypertexnames=false,colorlinks=true,breaklinks=true,bookmarks=true,urlcolor=blue,citecolor=blue,linkcolor=blue,bookmarksopen=false,draft=false]{hyperref}
\usepackage{mathrsfs}
\usepackage{tikz}
\usepackage{placeins}
\usepackage{booktabs}
\usepackage{capt-of}
\usepackage{xurl}
\DeclareMathOperator{\conv}{conv}
\DeclareMathOperator{\sign}{sign}

\DeclareMathOperator{\Vol}{Vol}
\DeclareMathOperator{\aff}{aff}
\DeclareMathOperator*{\argmax}{argmax}

\newcommand{\Po}{\mathcal{P}}
\newcommand{\D}{\mathcal{D}}

\theoremstyle{plain}
\newtheorem{theorem}{Theorem}
\newtheorem{lemma}[theorem]{Lemma}
\newtheorem{proposition}[theorem]{Proposition}
\newtheorem{corollary}[theorem]{Corollary}
\theoremstyle{definition}
\newtheorem{remark}[theorem]{Remark}
\newtheorem{example}[theorem]{Example}

\makeatletter
\renewcommand*{\top}{%
  {\mathpalette\@transpose{}}%
}
\newcommand*{\@transpose}[2]{%
  \raisebox{\depth}{$\m@th#1\scriptscriptstyle\mathsf{T}$}%
}
\makeatother

\begin{document}

\begin{frontmatter}

\title{More on Disjunctive Convex Hulls\\ of Generalized Cross Polytopes}

\author[umich]{Yushan Qu}
\author[umich]{Jon Lee\corref{cor1}}
\ead{jonxlee@umich.edu}
\cortext[cor1]{Corresponding author.}
\address[umich]{University of Michigan, Ann Arbor, MI, USA}

\begin{abstract}
We continue the study of the convex hull $\D\subset\mathbb{R}^{d+n}$ of a disjunction of
$n+1$ polytopes in $\mathbb{R}^d$, managed by $n$ binary indicator variables. Previously,
$\D$ had been characterized for arbitrary polytopes when $d\in\{1,2\}$, for polytopes
sharing a common constraint matrix (under a technical condition, including
hyper-rectangles), and for two generalized cross polytopes ($n=1$). Here we give a
complete description of $\D$ for arbitrary $n$ and $d$ when all $n+1$ polytopes are
generalized cross polytopes. Every vertical facet is described by a normalized support
inequality, and such an inequality is facet describing if and only if its equality
graph, a bipartite graph on the polytopes and the coordinates, is connected; all
vertical facets are generated explicitly from feasible spanning trees. We characterize when such a facet is one of the previously
known pairwise inequalities, and we show that pairwise inequalities suffice when
$d\leq3$, but not in general. We establish a bijection between the vertical facets and
the maximal cells of a regular subdivision of $\Delta_n\times\Delta_{d-1}$\,, which yields
the sharp bound $n+1+2^d\binom{n+d-1}{n}$ on the number of facets of $\D$, attained for
generic data. We also give a polynomial-time separation algorithm that returns a
violated facet, which is maximally violated among the normalized support inequalities. Computational experiments show that the facets that are not
pairwise inequalities close a large part of the gap left by the pairwise inequalities.
\end{abstract}

\begin{keyword}
mixed-integer optimization \sep disjunction \sep generalized cross polytope \sep convex hull \sep facet \sep spanning tree \sep regular subdivision \sep separation
\end{keyword}

\end{frontmatter}

\section{Introduction}
We continue the investigation, initiated by \cite{qu2026disjunctionconvexhullsgeneralized},
of the convex hull $\D$ of a disjunction of $n+1$ generalized cross polytopes in $\mathbb{R}^d$ managed by an indicator vector $z\in\mathbb{R}^n$.
\emph{Non-vertical facets} of $\D$ are
those described by linear inequalities that employ only $z$.
Other facets (which employ also $x\in \mathbb{R}^d$ in the describing inequalities)
are known as \emph{vertical facets}.

In \cite{qu2026disjunctionconvexhullsgeneralized}, we gave a
complete description of $\D$ for the case of $n=1$ (i.e., two
generalized cross polytopes). The vertical facets in that case are
referred to as ``pairwise inequalities''. We also demonstrated that
pairwise inequalities, while valid for $n>1$, do not give a complete description of $\D$ in that case. Our main goal in what follows is
to give a complete description for the case of $n>1$.
Additionally, we give a sharp upper bound on the number of facets of $\D$, by connecting our setting to that of regular subdivisions of  $\Delta_{n} \times \Delta_{d-1}$\,.

\medskip
\noindent {\bf Definitions and notation.}
We assume familiarity with some aspects of the theory of polytopes and of integer programming; see \cite{ziegler} and \cite{cornrio,CCZ_Book,NWbook,balas2018disjunctive}, respectively.
We say that $\alpha^\top x \leq \beta$ is
\emph{valid} for a polytope $\Po\subset \mathbb{R}^r$ if it is satisfied by
all points of $\Po$.
For a valid inequality $\alpha^\top x \leq \beta$  of $\Po$, the \emph{face described} is $\Po\cap \{ x \in \mathbb{R}^r ~:~ \alpha^\top x = \beta\}$.
The \emph{dimension} of $\Po$ is one less than the maximum number of affinely-independent points in $\Po$, and in particular, it is
\emph{full dimensional} if its dimension is
the same as the ambient dimension $r$.
A \emph{facet} of $\Po$ is a face having dimension one less than that of $\Po$.
A full-dimensional polytope has an essentially
unique minimal description (up to positive scaling) using
facet-describing inequalities.

Our setup is the same as in \cite{qu2026disjunctionconvexhullsgeneralized}, and we refer the
reader to \cite{QL_DAM2025} for many links to the literature on disjunctive optimization.
For a positive integer $d$, we let $[d]:=\{1,2,\ldots,d\}$.
For a positive integer $n$, we let $N_0 := \{0,1,...,n\}$ and  $N:= \{1,...,n\}$.
We define $n+1$ \emph{generalized cross polytopes}: For
$d \geq 1$ and $\bm{a}^k\in \mathbb{R}^d_{++}$\,, for
$k\in N_0$\,, we consider the generalized cross polytopes
$\Po_k:= \{x \in \mathbb{R}^d: \sum_{j \in [d]} s_{j} \bm{a}^k_{j} x_j \leq 1, \mbox{ for all }
s:=(s_1\,,\cdots,s_d) \in \{\pm1 \}^d\}$.
While each $\Po_k$ is defined via
$2^d$  linear inequalities,
we can also see it as
the solution set of the
single nonlinear inequality
$ \sum_{j \in [d]} \bm{a}^k_{j} |x_j| \leq 1$.

 We employ binary variables $z_k$\,, for
 $k\in N$.
The interpretation of $z_k=1$ is that $x\in\mathbb{R}^d$ is required to be in $\Po_k$\,. Further, if $z_k=0$ for all $k \in N$, then  $x$ is required to be in $\Po_0$\,.
We then require that
$x$ is enforced to be
in (at least) one of the $\Po_k$ using
$
\textstyle \sum_{k\in N} z_k \leq 1.
$
Because the $\Po_k$ may overlap, it can well be that $z_k=0$, but $x\in \Po_k$ for some $k\in N$. Likewise, we can have $z_k=1$ for
 some choice of $k\in N$, but $x\in \Po_0$ or other $\Po_j$\,.

Throughout, whenever we write $\genfrac(){0pt}{1}{x}{z}$,
it can be assumed that
$x\in\mathbb{R}^d$ and $z\in\mathbb{R}^n$.
For $k\in N$,  let $\mathbf{e}_k$ denote the $k$-th standard unit vector in $\mathbb{R}^n$,
and additionally for convenience, let  $\mathbf{e}_0$
denote the zero vector in $\mathbb{R}^n$.
For $k\in N_0$\,, let $\bar{\Po}_k:=\left\{ \genfrac(){0pt}{1}{\hat x}{\mathbf{e}_k}~:~ \hat x\in \Po_k\right\}$,
which is the polytope $\Po_k\subset \mathbb{R}^d$ lifted
to $\mathbb{R}^{d+n}$ by setting $z=\mathbf{e}_k$\,.
For $k\in N_0$\,, let $\mathcal{X}_k$ be the (finite) set of vertices of $\Po_k$\,,
and let $\bar{\mathcal{X}}_k:=\left\{\genfrac(){0pt}{1}{\hat x}{\mathbf{e}_k} ~:~ \hat x\in \mathcal{X}_k\right\}$, the (finite) set of vertices of
$\bar{\Po}_k$\,.
Finally, let
$
\D:= \conv
\left\{
\bar{\Po}_k ~:~ k\in N_0
\right\}
= \conv
\left\{
\bar{\mathcal{X}}_k ~:~ k\in N_0
\right\}.
$
The set of vertices of $\bar{\Po}_k$\,, for $k\in N_0$\,,  is
\[
\bar{\mathcal{X}}_k:=
 \left\{\left( \genfrac{}{}{0pt}{1}{\pm\hat{x}^{k,i}}{\mathbf{e}_k} \right)  ~:~  ~i \in [d]\right\}, \quad\mbox{where } \hat{x}^{k,i}:=\frac{1}{\bm{a}_i^k} \mathbf{e}_i\in\mathbb{R}^d,
\]
which serves to explicitly name the points.

Because each of our generalized cross polytopes is full dimensional,
so is $\D$ (using \cite[Theorem 2]{QL_DAM2025}), and further the (non-vertical) inequalities
$z_j\geq 0$, for $j\in N$, and $\sum_{j\in N} z_j \leq 1$ describe facets of $\D$ (using \cite[Theorems 4 and 5]{QL_DAM2025}).

We note that our generalized cross polytopes are symmetric in each coordinate, about the
origin. In applications, each polytope may have a different center, but for our theory it
suffices to consider the origin for all, via the following simple result.

\begin{proposition}[\cite{qu2026disjunctionconvexhullsgeneralized}]\label{prop:translations}
    Let $t^k \in \mathbb{R}^d$ be arbitrary,
    and let $\Po_k' := \Po_k + t^k$\, for
    $k\in N_0$\,. Let $\D' := {\rm conv}\left( \bar{\Po}'_0\,, \cdots, \bar{\Po}'_n \right)$. We consider the affine bijection $f: \mathbb{R}^{d+n}\rightarrow \mathbb{R}^{d+n}$ defined by $f\genfrac(){0pt}{1}{x}{z} := \genfrac(){0pt}{1}{x + \sum_{j\in N} z_j t^j + \left(1-\sum_{j\in N} z_j\right)t^0}{ z}$. Then, $\D' = f(\D)$.
\end{proposition}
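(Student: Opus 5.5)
The plan is to show that $f$ maps each lifted polytope $\bar{\Po}_k$ onto $\bar{\Po}'_k$\,, and then to use the fact that affine maps commute with taking convex hulls. First I record that $f$ is an affine bijection of $\mathbb{R}^{d+n}$. It has the form $f\genfrac(){0pt}{1}{x}{z} = \genfrac(){0pt}{1}{x + Tz + t^0}{z}$, where $T$ is the $d\times n$ matrix with columns $t^j - t^0$ for $j\in N$. Its linear part is block unit-triangular, so it is invertible, with inverse $\genfrac(){0pt}{1}{x}{z}\mapsto\genfrac(){0pt}{1}{x - Tz - t^0}{z}$.

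Next I fix $k\in N_0$ and evaluate $f$ on a point $\genfrac(){0pt}{1}{\hat x}{\mathbf{e}_k}$ with $\hat x\in\Po_k$\,.
\begin{itemize}
\item If $k\in N$, then $\sum_{j\in N} z_j = 1$ and the only nonzero term is $z_k t^k$.
\item If $k=0$, then $z=\mathbf{e}_0=0$, so the $t^0$ term carries coefficient $1$.
\end{itemize}
In both cases the convention $\mathbf{e}_0=0$ gives
\[
f\genfrac(){0pt}{1}{\hat x}{\mathbf{e}_k}=\genfrac(){0pt}{1}{\hat x + t^k}{\mathbf{e}_k}.
\]
As $\hat x$ ranges over $\Po_k$\,, the point $\hat x+t^k$ ranges over exactly $\Po_k'=\Po_k+t^k$. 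Hence $f(\bar{\Po}_k)=\bar{\Po}'_k$\,, where $\bar{\Po}'_k$ is defined by the same lifting as before.

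Finally, an affine map preserves convex combinations: $f(\sum\lambda_i p_i)=\sum \lambda_i f(p_i)$ whenever $\sum\lambda_i=1$. It follows that $f(\conv S)=\conv f(S)$ for any set $S$. Applying this to $S=\bigcup_{k\in N_0}\bar{\Po}_k$ gives $f(\D)=\conv\bigcup_k f(\bar{\Po}_k)=\conv\bigcup_k \bar{\Po}'_k=\D'$.

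No step is a real obstacle. The only point needing care is the bookkeeping for the $k=0$ case, which is exactly why the coefficient $1-\sum_{j\in N} z_j$ is attached to $t^0$.
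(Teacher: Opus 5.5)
Your argument is correct and complete: $f$ is affine, it maps each $\bar{\Po}_k$ onto $\bar{\Po}'_k$ (the case $k=0$ is handled correctly via $\mathbf{e}_0=0$), and affine maps commute with convex hulls, so $f(\D)=\D'$; bijectivity of $f$ is not even needed for this. The paper quotes this proposition from the earlier work without reproducing a proof, so there is nothing here to compare against, and yours is the standard argument one would expect.
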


\medskip
\noindent {\bf Organization and contributions.}
In \S\ref{sec:prior}, we summarize the results on pairwise inequalities that we build on.
In \S\ref{sec:spanning}, we show that a normalized support inequality is facet describing
if and only if its equality graph is connected (Theorem \ref{thm:connectedness}), we
generate all vertical facets explicitly from feasible spanning trees
(Corollary \ref{cor:treefacets}), and we obtain a complete description of $\D$ for general
$n$ and $d$ (Theorem \ref{thm:completedescrip}).
In \S\ref{sec:pairwise}, we characterize which facets are pairwise inequalities
(Theorem \ref{thm:conditionforpairwise}); in particular, the pairwise inequalities
describe $\D$ when $n=1$ (Corollary \ref{cor:n1}) or $d\leq3$ (Corollary \ref{cor:d3}),
but not in general (Example \ref{ex:nonpairwise}).
In \S\ref{sec:regular}, we establish a bijection between the vertical facets of $\D$ and
the maximal cells of a regular subdivision of $\Delta_n\times\Delta_{d-1}$
(Theorem \ref{facetcellbijection}), and we use it to prove a sharp upper bound on the
number of facets of $\D$, attained for generic data (Theorem \ref{thm:facetcount}); we
also describe how the facets change as the data cross a wall on which this subdivision is
not a triangulation (Theorem \ref{thm:flip}).
In \S\ref{sec:sep}, we give a polynomial-time separation algorithm that either certifies
that a point belongs to $\D$ or returns a violated facet-describing inequality, which is
maximally violated among the normalized support inequalities
(Theorem \ref{thm:separation}).
In \S\ref{sec:experiments}, we report computational experiments showing that the facets
of $\D$ that are not pairwise inequalities close a substantial part of the relaxation gap.

\section{Pairwise inequalities}\label{sec:prior}
In \cite{qu2026disjunctionconvexhullsgeneralized}, we introduced
``pairwise inequalities'' and used them for a complete description of $\D$ for $n=1$. We briefly summarize that here.

Consider $k_1\,,k_2 \in N_0$ and $k_1 < k_2$\,. For each $j \in [d]$, we define $r^{k_1\,,k_2}_j := \bm{a}^{k_1}_j /\bm{a}^{k_2}_j$\,,
we partition $[d]$, depending on the $r^{k_1\,,k_2}_j$\,, into the blocks:
\begin{align*}
 \mathcal{L}^{k_1\,,k_2}(j) &:= \left\{i \in [d]: r^{k_1\,,k_2}_i < r^{k_1\,,k_2}_j\right\}, \\
 \mathcal{E}^{k_1\,,k_2}(j) &:= \left\{i \in [d]: r^{k_1\,,k_2}_i = r^{k_1\,,k_2}_j\right\}, \\
 \mathcal{G}^{k_1\,,k_2}(j) &:= \left\{i \in [d]: r^{k_1\,,k_2}_i > r^{k_1\,,k_2}_j\right\}.
\end{align*}
 Next, for each $i\in[d]$, we define
\[
\alpha^{k_1\,,k_2}_i(j):=
\begin{cases}
\bm{a}_i^{k_1}=  r^{k_1\,,k_2}_j \bm{a}_i^{k_2}\,, & \mbox{for } i \in \mathcal{E}^{k_1\,,k_2}(j); \\
 \bm{a}_i^{k_1}\,, & \mbox{for } i \in \mathcal{L}^{k_1\,,k_2}(j); \\
 r^{k_1\,,k_2}_j \bm{a}_i^{k_2}\,, & \mbox{for } i \in \mathcal{G}^{k_1\,,k_2}(j).
\end{cases}
\]
Equivalently, $\alpha^{k_1\,,k_2}_i(j):= \min \{ \bm{a}_i^{k_1}\,,  r^{k_1\,,k_2}_j \bm{a}_i^{k_2}\}$. Finally, for each $k \in N_0$\,, we define $M^{k_1\,,k_2}_k(j) := \max_{ i \in [d]}\alpha^{k_1\,,k_2}_i(j)/\bm{a}^k_i $\,.

Now, for
$k_1\,,k_2 \in N_0$\,, with $k_1 < k_2$\,,  $j\in[d]$, and  $s\in \{\pm1 \}^d$,
we define the \emph{pairwise inequality}
\begin{equation}\tag{$F(k_1\,,k_2\,,j,s)$}\label{Fkalpha}
\sum_{i\in [d]} s_i \alpha^{k_1\,,k_2}_i(j) x_i + \sum_{k\in N} \left(M^{k_1\,,k_2}_0(j) - M^{k_1\,,k_2}_k(j) \right) z_k \leq M^{k_1\,,k_2}_0(j).
\end{equation}
We can also view these inequalities in the more compact nonlinear form
\begin{equation}\tag{$\bar{F}(k_1\,,k_2\,,j)$}\label{Fkalpha_bar}
\sum_{i\in [d]}  \alpha^{k_1\,,k_2}_i(j) |x_i| + \sum_{k\in N} \left(M^{k_1\,,k_2}_0(j) - M^{k_1\,,k_2}_k(j) \right) z_k \leq M^{k_1\,,k_2}_0(j),
\end{equation}
for all $j\in[d]$.

In general, the pairwise inequalities are necessary for describing $\D$, and they are sufficient, together with the inequalities that describe non-vertical facets, when $n=1$:

\begin{theorem}[\cite{qu2026disjunctionconvexhullsgeneralized}]\label{thm:facet}
For $k_1\,,k_2 \in N_0$ with $k_1 < k_2$\,, $j\in[d]$, and  $s\in \{\pm1 \}^d$,
the pairwise inequality \ref{Fkalpha} describes a facet of $\mathcal{D}$.
\end{theorem}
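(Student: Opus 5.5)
The plan is the standard two-step one: first show that \ref{Fkalpha} is valid for $\D$, then exhibit $d+n$ affinely independent vertices of $\D$ on which it is tight. Since $\D=\conv\{\bar{\mathcal{X}}_k : k\in N_0\}$, it suffices to check the inequality at the vertices $\left(\genfrac{}{}{0pt}{1}{\pm\hat x^{k,i}}{\mathbf{e}_k}\right)$. Fix $k\in N_0$ and set $z=\mathbf{e}_k$; the right-hand side minus the $z$-term then equals $M^{k_1,k_2}_k(j)$, with the convention $M_0-M_0=0$ when $k=0$. So validity amounts to
\[
s_i\,\alpha^{k_1,k_2}_i(j)\,(\pm 1/\bm{a}^k_i)\le M^{k_1,k_2}_k(j)
\]
for every $i$. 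This holds because $M^{k_1,k_2}_k(j)=\max_i \alpha^{k_1,k_2}_i(j)/\bm{a}^k_i$. The same computation gives the tight points: the vertex $\left(\genfrac{}{}{0pt}{1}{s_i\hat x^{k,i}}{\mathbf{e}_k}\right)$ is tight exactly when $i\in I_k:=\argmax_{i}\alpha^{k_1,k_2}_i(j)/\bm{a}^k_i$.

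Next I would identify the tight sets for $k_1$ and $k_2$. For $k_1$ we have $\alpha_i/\bm{a}^{k_1}_i=\min\{1,\,r_j/r_i\}$, where $r=r^{k_1,k_2}$. Its maximum value is $1$, attained exactly on $\mathcal{L}(j)\cup\mathcal{E}(j)$. For $k_2$ we have $\alpha_i/\bm{a}^{k_2}_i=\min\{r_i,r_j\}$, with maximum $r_j$ attained exactly on $\mathcal{E}(j)\cup\mathcal{G}(j)$. The two sets overlap in $\mathcal{E}(j)\ni j$, and together they cover all of $[d]$. For every other $k$, $I_k$ is nonempty, so at least one vertex of $\bar{\Po}_k$ is tight.

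Then I would build the affinely independent family. I take the $d$ points
\[
p_i:=\left(\genfrac{}{}{0pt}{1}{s_i\hat x^{k_{(i)},i}}{\mathbf{e}_{k_{(i)}}}\right),\qquad i\in[d],
\]
where $k_{(i)}=k_1$ if $i\in\mathcal{L}\cup\mathcal{E}$ and $k_{(i)}=k_2$ if $i\in\mathcal{G}$. To these I add the point $q:=\left(\genfrac{}{}{0pt}{1}{s_j\hat x^{k_2,j}}{\mathbf{e}_{k_2}}\right)$, and for each $k\in N_0\setminus\{k_1,k_2\}$ one tight point $\left(\genfrac{}{}{0pt}{1}{s_{i_k}\hat x^{k,i_k}}{\mathbf{e}_k}\right)$ with $i_k\in I_k$. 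This gives $d+1+(n-1)=d+n$ points.

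To prove affine independence, I would subtract a base point, say $p_j$ (which sits in block $k_1$), and show the difference vectors are linearly independent. The $z$-parts $\mathbf{e}_k-\mathbf{e}_{k_1}$, over the distinct values $k\neq k_1$ that occur, are linearly independent. So a triangular argument works: a vanishing linear combination must have, in each $z$-block $k\ne k_1$, coefficients whose sum is $0$. For the $n-1$ singleton blocks this forces those coefficients to vanish. What remains lives in blocks $k_1$ and $k_2$, where the $x$-parts are multiples of distinct or matched unit vectors.

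The delicate step, which I expect to be the main obstacle, is exactly this block-$k_1$/$k_2$ bookkeeping. Coordinate $j$ appears in both $p_j$ (block $k_1$) and $q$ (block $k_2$). The $x$-parts $s_i\hat x^{k,i}-s_j\hat x^{k_1,j}$ have to be checked to span an independent set together with the $z$-constraint on block $k_2$. Because $\hat x^{k_1,j}\ne\hat x^{k_2,j}$ in general (and even when $\bm{a}^{k_1}_j=\bm{a}^{k_2}_j$ the $z$-components differ), the vectors for block $k_2$ contribute one extra independent direction, namely the $z_{k_2}$ direction, beyond the $e_i$'s. I would verify this by writing the $(d+n)\times(d+n)$ matrix of difference vectors (or directly the points with a homogenizing $1$) in block form and computing that its determinant is a nonzero product of the $1/\bm{a}_i^k$'s. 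This shows the face has dimension $d+n-1$. Finally, since the $x$-coefficients $\alpha_i>0$ are nonzero, the face is not the whole of $\D$, so it is a facet.
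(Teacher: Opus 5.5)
Your proposal is correct and follows essentially the paper's route. You compute the same active sets $I_{k_1}=\mathcal{L}^{k_1\,,k_2}(j)\cup\mathcal{E}^{k_1\,,k_2}(j)$ and $I_{k_2}=\mathcal{E}^{k_1\,,k_2}(j)\cup\mathcal{G}^{k_1\,,k_2}(j)$ as in Proposition~\ref{prop:specialcase}, and your $d+n$ tight vertices are exactly one of the spanning trees of Remark~\ref{rem:pairwisetree}; the only difference is that the paper gets affine independence from Lemma~\ref{lem:forest} via Theorem~\ref{thm:connectedness}, while you check it directly. The one slip is the ``$(d+n)\times(d+n)$ determinant'': you have $d+n-1$ difference vectors in $\mathbb{R}^{d+n}$, so that matrix is not square. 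Your coefficient chasing already finishes the argument, though: once the singleton blocks vanish, each $x_i$ with $i\neq j$ appears in exactly one remaining difference vector, which kills its coefficient, and then the zero coefficient sum on block $k_2$ kills the coefficient of $q$.
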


\begin{theorem}[\cite{qu2026disjunctionconvexhullsgeneralized}]\label{thm:complete}
For $n=1$, the pairwise inequalities $F(0,1,j,s)$ for all $j\in[d]$ and $s\in \{\pm1 \}^d$,
together with the inequalities $ 0\leq z_1 \leq 1$
describe $\D$.
\end{theorem}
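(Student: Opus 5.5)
We prove Theorem \ref{thm:complete} for $n=1$ with the standard polarity argument. The first move is to reduce everything to describing the vertical facets. $\D$ is full dimensional, so it is the intersection of its facet-describing inequalities. Every non-vertical facet uses only $z_1$. The projection of $\D$ onto $z_1$ is $[0,1]$, so the non-vertical facets are exactly $0\le z_1\le 1$. It therefore suffices to show that every vertical facet of $\D$ is, up to positive scaling, some $F(0,1,j,s)$. By Theorem \ref{thm:facet}, each such inequality is indeed a facet, so the description is then complete and irredundant.

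Let $\alpha^\top x+\gamma z_1\le\beta$ describe a vertical facet, so $\alpha\neq 0$. We first fix the signs. By the coordinate symmetry $x_i\mapsto -x_i$ of both cross polytopes, we may choose $s$ with $s_i\alpha_i=|\alpha_i|$ and write the inequality as $\sum_i s_i c_i x_i+\gamma z_1\le\beta$ with $c\ge 0$. We then read validity off the vertices $\pm\hat x^{k,i}$.
\begin{itemize}
\item At $z_1=0$, validity says $\max_i c_i/\bm a^0_i\le\beta$.
\item At $z_1=1$, validity says $\max_i c_i/\bm a^1_i\le\beta-\gamma$.
\end{itemize}
Since the inequality is facet describing (hence tight), both hold with equality: $\beta=\max_i c_i/\bm a^0_i=:m_0$ and $\beta-\gamma=\max_i c_i/\bm a^1_i=:m_1$. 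Thus the inequality is determined by $c$ alone, as $\sum_i s_ic_ix_i+(m_0-m_1)z_1\le m_0$. The map $c\mapsto(m_0,m_1)$ is positively homogeneous, so we may normalize $m_0=1$. The tight vertices are then $s_i\hat x^{0,i}$ for $i\in T_0:=\{i:c_i=\bm a^0_i\}$ and $s_i\hat x^{1,i}$ for $i\in T_1:=\{i:c_i=m_1\bm a^1_i\}$. There are at most $|T_0|+|T_1|$ of them.

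The key step is a dimension count. The face must contain $d+1$ affinely independent tight points, but these are not automatically vertices of the stated form when some $c_i=0$. So we first show that $c_i>0$ for all $i$ at a facet. If $c_i=0$, then both $\pm\hat x^{k,i}$ are tight for that $k$ only if $\max$ equals $0$, which is impossible; so coordinate $i$ contributes no tight point. All tight points would then lie in the hyperplane $x_i=0$, and the face could not be a facet.

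Now view $c$ as a point of the polyhedron $Q:=\{c\ge 0:\ c_i\le\bm a^0_i,\ c_i\le m_1\bm a^1_i\ \forall i\}$. The facet condition, namely that tight points affinely span a hyperplane, forces the tight sets to determine $c$ uniquely: every $i$ must lie in $T_0\cup T_1$, and $T_0\cap T_1\neq\emptyset$ (the link between the two blocks). Concretely, suppose $T_0\cap T_1=\emptyset$. Then we can perturb $c$ by scaling the $T_1$-coordinates by $(1+\epsilon)$, which keeps every tight point tight; this contradicts the uniqueness of the facet normal. So there is a $j\in T_0\cap T_1$, which gives $m_1=\bm a^0_j/\bm a^1_j=r^{0,1}_j$. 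Validity then gives $c_i\le\min\{\bm a^0_i,r_j\bm a^1_i\}=\alpha^{0,1}_i(j)$. Since every $i$ lies in $T_0\cup T_1$, we have $c_i=\alpha^{0,1}_i(j)$. Finally $M_0(j)=1$ and $M_1(j)=r_j$, which recovers exactly $F(0,1,j,s)$.

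The main obstacle is making the ``tight sets determine $c$'' step rigorous. We would phrase it as follows: the face is a facet iff the linear system "$c_i=\bm a^0_i$ on $T_0$, $c_i=m_1\bm a^1_i$ on $T_1$", in the unknowns $(c,m_1)$, has a unique solution. This system has $d+1$ unknowns, and the connected-bipartite-graph argument above (coordinates linked to the two polytopes) shows that uniqueness forces every coordinate to be covered and the two polytopes to be linked.
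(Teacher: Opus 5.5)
Your proof is correct except for one step that needs a sentence (below), and it takes a different route from the paper. The paper obtains Theorem \ref{thm:complete} as Corollary \ref{cor:n1} of its general-$n$ theory:
\begin{itemize}
\item The proof of Theorem \ref{thm:completedescrip} brings every vertical facet to the form $F(\alpha,s)$. This matches your sign-fixing, normalization and $c_i>0$ steps.
\item Theorem \ref{thm:connectedness} then shows that $G(\alpha)$ is connected by a primal count via Lemma \ref{lem:forest}: the $n+d$ affinely independent tight vertices form a forest, hence a spanning tree.
\item For $n=1$, connectivity is exactly your condition $T_0\cup T_1=[d]$, $T_0\cap T_1\neq\emptyset$. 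Theorem \ref{thm:conditionforpairwise} then performs the identification $c=\alpha^{0,1}(j)$ that you carry out by hand.
\end{itemize}
You work instead in coefficient space. The face is a facet iff the hyperplane through its tight vertices is unique, and you exhibit a second such hyperplane by freeing an uncovered $c_i$ or by rescaling the $T_1$-block.

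That rescaling is the same move as Step~4 of the separation algorithm, and it works verbatim for any $n$: rescale $\alpha_i$ and $M_k$ on a component of $G(\alpha)$ avoiding $0$. So it proves the ``only if'' half of Theorem \ref{thm:connectedness} without Lemma \ref{lem:forest}. For $n=1$ that half suffices, since validity of $F(0,1,j,s)$ comes from Theorem \ref{thm:facet}. What the paper's route buys is the full equivalence (connected implies facet). That equivalence is what lets feasible spanning trees generate all vertical facets for general $n$, and it pinpoints when pairwise inequalities stop sufficing.

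The step to repair is ``facet describing (hence tight), both hold with equality.'' Being a facet does not by itself say the facet meets both layers $z_1=0$ and $z_1=1$. Add the following: if the facet contained no point with $z_1=0$, all its vertices would have $z_1=1$. It would then lie in, hence equal, the facet $\bar{\Po}_1=\D\cap\{z_1=1\}$. So the given inequality would be a positive multiple of $z_1\le1$, which has zero $x$-part, contradicting $\alpha\neq0$. The case $z_1=1$ is symmetric. This is exactly the argument in the proof of Theorem \ref{thm:completedescrip}.

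Two smaller points. First, say explicitly that the $T_1$-rescaling multiplies $m_1$, and hence the $z_1$-coefficient, by the same factor; otherwise the tight vertices with $z_1=1$ are not preserved. Second, your separate $c_i>0$ argument is subsumed by the covering step, since $m_0=1$ and $m_1>0$ put any $i$ with $c_i=0$ outside $T_0\cup T_1$.
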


\cite[Example 21]{qu2026disjunctionconvexhullsgeneralized}
demonstrates that
Theorem \ref{thm:complete} does not extend to $n>1$ (see also Example \ref{ex:nonpairwise}),
and our main goal is to give a complete description of $\D$
for general $n$. Of course our complete description
will have to include the inequalities describing non-vertical facets as
well as the pairwise inequalities.

\section{Equality graphs, spanning trees, and a complete description of \texorpdfstring{$\D$}{D}}\label{sec:spanning}

In this section, we give a complete description of $\D$ for arbitrary $n,d\geq1$
(Theorem \ref{thm:completedescrip}). We first introduce the normalized support
inequalities $F(\alpha,s)$, which are valid for $\D$, and identify the vertices of
$\D$ at which they are tight (Theorem \ref{thm:validity}). After a lemma relating
affine independence of tight vertices to forests (Lemma \ref{lem:forest}), we show
that $F(\alpha,s)$ is facet describing if and only if its equality graph $G(\alpha)$
is connected (Theorem \ref{thm:connectedness}). This criterion identifies facets,
but it does not produce their coefficients. We therefore construct, from each
feasible spanning tree $T$, explicit coefficients $\alpha^T$, and show that
$F(\alpha^T,s)$ is facet describing (Proposition \ref{prop:sptoF}). Conversely, every
spanning tree of the equality graph of a facet-describing $F(\alpha,s)$ is feasible
and recovers $F(\alpha,s)$ (Proposition \ref{sprecover}). Corollary
\ref{cor:treefacets} summarizes this correspondence, and
Theorem \ref{thm:completedescrip} shows that these facets, together with the
non-vertical facets, describe $\D$.

\medskip
\noindent {\bf Graph conventions.}
We regard $N_0$ and $[d]$ as disjoint node sets, and we regard each pair
$(k,i)\in N_0\times[d]$ as an arc joining $k\in N_0$ and $i\in[d]$. Thus, for
$H\subset N_0\times[d]$, the pair $(N_0\sqcup[d],H)$ is a bipartite graph. We say that
$T\subset N_0\times[d]$ is a \emph{spanning tree on} $N_0\sqcup[d]$ if the graph
$(N_0\sqcup[d],T)$ is connected and has no cycles.

\medskip
\noindent {\bf Normalized support inequalities.}
For $\alpha\in\mathbb{R}^d_{++}$ and $k\in N_0$\,, let
$M_k(\alpha):= \max_{i \in [d]} \frac{\alpha_i}{\bm{a}^k_i}$\,. Because each $M_k$ is
positively homogeneous, every $\alpha\in\mathbb{R}^d_{++}$ can be scaled so that
$M_0(\alpha)=1$; we always assume this normalization. For $s\in\{\pm1\}^d$ and
$\alpha\in\mathbb{R}^d_{++}$ with $M_0(\alpha)=1$, the \emph{normalized support
inequality} is
\begin{equation}\tag{$F(\alpha,s)$}\label{FmultiG}
        \sum_{i \in [d]} s_i \alpha_i x_i + \sum_{k \in N}(1 - M_k(\alpha))z_k \leq 1.
\end{equation}
For $k\in N_0$\,, the \emph{active set} is
$I_k(\alpha) := \{i \in [d]: \alpha_i = M_k(\alpha)\bm{a}^k_i\}$, which is nonempty. The
\emph{equality graph} $G(\alpha)$ is the bipartite graph having node bipartition
$N_0\sqcup[d]$ and arc set
\begin{align*}
E(\alpha)&:=\{(k,i)\in N_0 \times [d] ~:~ i\in I_k(\alpha)\}\\
&\phantom{:}=\{(k,i)\in N_0 \times [d] ~:~ M_k(\alpha) = \alpha_i/\bm{a}^k_i\}.
\end{align*}
Finally, for fixed $s \in \{\pm 1\}^d$, we write
\[
\hat{\xi}^{k,i}:=\genfrac(){0pt}{1}{s_i\hat{x}^{k,i}}{\mathbf{e}_k}, \quad\mbox{for } (k,i)\in N_0\times[d],
\]
for the vertex of $\bar{\Po}_k$ whose nonzero $x$-coordinate has sign $s_i$\,.

\begin{theorem}\label{thm:validity}
     For $s \in \{\pm 1\}^d$ and $\alpha \in \mathbb{R}^d_{++}$ with $M_0(\alpha)=1$, the normalized support inequality \ref{FmultiG} is valid for $\D$. Moreover, a vertex of $\D$ satisfies \ref{FmultiG} as an equation if and only if it is $\hat{\xi}^{k,i}$ for some $(k,i)\in E(\alpha)$.
\end{theorem}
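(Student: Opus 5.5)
Since $\D$ is the convex hull of the finite set $\bigcup_{k\in N_0}\bar{\mathcal{X}}_k$, I will check both claims of Theorem \ref{thm:validity} only at these points. Validity then follows by convexity. For the second claim, every vertex of $\D$ lies in this finite set, so it is enough to see which points of the set attain equality. I will also note that each such point that is tight is in fact a vertex of $\D$.

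Fix $k\in N_0$ and $i\in[d]$, and consider a point $\genfrac(){0pt}{1}{\sigma\hat{x}^{k,i}}{\mathbf{e}_k}$ with $\sigma\in\{\pm1\}$. Its $x$-part contributes $s_i\sigma\,\alpha_i/\bm{a}^k_i$ to the left-hand side. For $k\in N$, the $z$-part contributes $1-M_k(\alpha)$. For $k=0$ it contributes $0$, which is also $1-M_0(\alpha)$ because of the normalization $M_0(\alpha)=1$. The inequality therefore reads
\[
s_i\sigma\,\frac{\alpha_i}{\bm{a}^k_i}\le M_k(\alpha).
\]
This holds because $\alpha_i/\bm{a}^k_i\le M_k(\alpha)$ by the definition of $M_k$, and $s_i\sigma\,\alpha_i/\bm{a}^k_i\le \alpha_i/\bm{a}^k_i$ since $\alpha_i>0$.

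Equality requires both estimates to be tight. The first is tight exactly when $i\in I_k(\alpha)$, that is, when $(k,i)\in E(\alpha)$. The second is tight exactly when $s_i\sigma=1$, that is, $\sigma=s_i$. Here strict positivity of $\alpha$ matters, because it makes $s_i\sigma=-1$ strictly slack. So the tight points are exactly the $\hat{\xi}^{k,i}$ with $(k,i)\in E(\alpha)$.

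It remains to confirm that every $\hat{\xi}^{k,i}$ is genuinely a vertex of $\D$; this is the only step that is not purely mechanical. If $\hat{\xi}^{k,i}$ were a convex combination of other generating points, comparing $z$-coordinates forces all weight onto points of $\bar{\mathcal{X}}_k$. For $k\in N$ this is because $\mathbf{e}_k$ is a vertex of the simplex $\conv\{\mathbf{e}_0,\dots,\mathbf{e}_n\}$. For $k=0$ it is because $z=0$ is also a vertex of that simplex. The point $s_i\hat{x}^{k,i}$ is a vertex of the cross polytope $\Po_k$, so no such combination exists.

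I expect no real obstacle here. The main care points are the bookkeeping for $k=0$ with $\mathbf{e}_0=0$, and the use of $\alpha\in\mathbb{R}^d_{++}$ to exclude the opposite-sign vertices.
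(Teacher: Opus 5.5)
Your proposal is correct and follows essentially the same route as the paper. You evaluate the inequality at each generating point $\genfrac(){0pt}{1}{\pm\hat{x}^{k,i}}{\mathbf{e}_k}$, use $M_0(\alpha)=1$ to handle $k=0$, and read off that the slack $s_i\sigma\alpha_i/\bm{a}^k_i-M_k(\alpha)$ vanishes exactly when $\sigma=s_i$ and $(k,i)\in E(\alpha)$. Your extra check that each $\hat{\xi}^{k,i}$ is genuinely a vertex of $\D$ is also correct; it uses that the $z$-coordinates are vertices of a simplex and that $s_i\hat{x}^{k,i}$ is a vertex of $\Po_k$, and it fills in a point the paper leaves implicit.
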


\begin{proof}
  It suffices to show that the vertices of $\D$ satisfy \ref{FmultiG}.
For $k\in N_0$\,,
the vertices of $\bar{\Po}_k$ are the $2d$ points $\genfrac(){0pt}{1}{\pm\hat{x}^{k,i}}{\mathbf{e}_k}$
 for $ i \in [d]$. Observing that for $k\in N_0$\,, $M_k(\alpha)$ is simply the maximum objective value of
 $\sum_{i \in [d]} s_i \alpha_i x_i$ on the $2d$ vertices of $\Po_k$\,,
 and that $1-  \sum_{k \in N}(1 - M_k(\alpha))z_k = M_k(\alpha)$ when $z:=\mathbf{e}_k$ (using $M_0(\alpha)=1$ for $k=0$),
 validity follows.
 Moreover, at the vertex $\genfrac(){0pt}{1}{t\,\hat{x}^{k,i}}{\mathbf{e}_k}$, with $t\in\{\pm1\}$, the left-hand side minus the right-hand side of \ref{FmultiG} is $t\,s_i\,\alpha_i/\bm{a}^k_i-M_k(\alpha)$, which is zero if and only if $t=s_i$ and $\alpha_i/\bm{a}^k_i=M_k(\alpha)$, that is, if and only if the vertex is $\hat{\xi}^{k,i}$ and $(k,i)\in E(\alpha)$.
\end{proof}

\begin{lemma}\label{lem:forest}
    Fix $\alpha \in \mathbb{R}^d_{++}$ and $s \in \{ \pm 1\}^d$. Let $H \subset E(\alpha)$.
    Then, the vertices $\{\hat{\xi}^{k,i}: (k,i) \in H\}$
    are affinely independent if and only if $H$ is a forest.
\end{lemma}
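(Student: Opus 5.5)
Affine independence of $\{\hat\xi^{k,i}:(k,i)\in H\}$ is equivalent to linear independence of the lifted vectors $\binom{\hat\xi^{k,i}}{1}\in\mathbb{R}^{d+n+1}$, and I will work with these throughout. The lifted vector for $(k,i)$ is
\[
\Big( \tfrac{s_i}{\bm{a}^k_i}\mathbf{e}_i ,\ \mathbf{e}_k ,\ 1\Big).
\]
The last two blocks together encode $k\in N_0$ bijectively as a standard unit vector in $\mathbb{R}^{n+1}$: for $k\in N$ we may subtract the $z_k$ row from the homogenizing row, and for $k=0$ only the homogenizing row is nonzero. So after an invertible linear change of coordinates, the lifted vector becomes $\big(\tfrac{s_i}{\bm{a}^k_i}\mathbf{e}_i,\ \mathbf{f}_k\big)$, where $\{\mathbf{f}_k:k\in N_0\}$ is a standard basis of $\mathbb{R}^{n+1}$. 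Scaling the $i$-th $x$-coordinate by $s_i$ is also invertible. We are thus reduced to vectors $w^{k,i}=c_{k,i}\mathbf{e}_i+\mathbf{f}_k$ with $c_{k,i}=1/\bm{a}^k_i>0$. This is a weighted incidence vector of the bipartite arc $(k,i)$.

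\emph{If $H$ contains a cycle}, I exhibit a nontrivial dependency directly, and this is where $H\subset E(\alpha)$ is used. On $E(\alpha)$ we have $1/\bm{a}^k_i = M_k(\alpha)/\alpha_i$, so $w^{k,i}=\frac{M_k}{\alpha_i}\mathbf{e}_i+\mathbf{f}_k$. Rescale each $x$-coordinate $i$ by $\alpha_i$ and each $\mathbf{f}$-coordinate $k$ by $M_k$; both rescalings are invertible since $\alpha_i>0$ and $M_k>0$. Then $w^{k,i}$ becomes proportional to $\mathbf{e}_i+\mathbf{f}_k$, the ordinary incidence vector of the arc in a bipartite graph. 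Take a cycle $k_1 i_1 k_2 i_2\cdots k_m i_m k_1$ and alternate the signs $+1,-1$ along its arcs. The cycle has even length because the graph is bipartite, so every node receives one $+1$ and one $-1$, and the signed sum vanishes. Hence the points are affinely dependent.

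\emph{If $H$ is a forest}, I argue by induction on $|H|$ by stripping leaves. An acyclic $H$ with at least one arc has a leaf node $v$, incident to exactly one arc $h\in H$. The vector $w^h$ is the only vector in the family with a nonzero $v$-coordinate (either the $\mathbf{e}_v$ coordinate with $c>0$, or the $\mathbf{f}_v$ coordinate equal to $1$). So in any linear dependency the coefficient of $w^h$ is zero, and induction on $H\setminus\{h\}$ finishes. This direction does not need $H\subset E(\alpha)$ at all.

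The main care point is the cycle direction: for general weights $1/\bm{a}^k_i$ a cycle does not give a dependency, and it is precisely the equality structure $\alpha_i=M_k(\alpha)\bm{a}^k_i$ that makes the weights factor as a product of a node-$i$ scalar and a node-$k$ scalar. I will state the change of coordinates explicitly so that this rank-one factorization is transparent.
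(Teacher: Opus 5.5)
Your proof is correct and follows essentially the same route as the paper. Your homogenization and invertible coordinate changes (turning the $(z,1)$ block into a standard basis indexed by $N_0$, then rescaling by $\alpha_i$ and $M_k(\alpha)$) are the primal counterpart of the paper's node equations for all $k\in N_0$ and its substitution $\mu_{k,i}=M_k(\alpha)\lambda_{k,i}$. After that, both arguments strip leaves for forests and put alternating $\pm1$ weights on an even cycle.
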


\begin{proof}
    The vertices $\{\hat{\xi}^{k,i}: (k,i) \in H\}$ are affinely dependent if and
    only if there exist coefficients $\lambda_{k,i}$\,, not all zero, such that
    \begin{align}
    \sum_{(k,i)\in H} \lambda_{k,i}\hat{\xi}^{k,i}&=0, \label{eq:forest-comb}\\
    \sum_{(k,i)\in H} \lambda_{k,i}&=0. \label{eq:forest-aff}
    \end{align}
    For each $k\in N$, taking the $z_k$-coordinate in \eqref{eq:forest-comb} gives
    $\sum_{i:(k,i)\in H}\lambda_{k,i}=0$. Together with \eqref{eq:forest-aff},
    this implies the same for $k=0$, so
    \begin{equation}\label{eq:forest-rows}
    \textstyle\sum_{i:(k,i)\in H}\lambda_{k,i}=0, \quad\text{for every } k\in N_0\,.
    \end{equation}
    For each $i\in[d]$, taking the $x_i$-coordinate in \eqref{eq:forest-comb} and
    using $s_i\neq0$ gives
    \begin{equation}\label{eq:forest-cols}
    \textstyle\sum_{k:(k,i)\in H}\lambda_{k,i}/\bm{a}_i^k=0, \quad\text{for every } i\in[d].
    \end{equation}
    Because $H\subset E(\alpha)$, $\frac{\alpha_i}{\bm{a}_i^k}=M_k(\alpha)$ for every $(k,i)\in H$. Set $\mu_{k,i}:=M_k(\alpha)\lambda_{k,i}$\,. Because $M_k(\alpha)>0$, this change of variables is invertible.
    Multiplying \eqref{eq:forest-rows} by $M_k(\alpha)$, and, because $\alpha_i>0$ for all $i\in[d]$, using $\frac{1}{\bm{a}_i^k}=\frac{M_k(\alpha)}{\alpha_i}$ in \eqref{eq:forest-cols} and multiplying by $\alpha_i$\,, we obtain
    \[
    \sum_{i:(k,i)\in H}\mu_{k,i}=0,\ \forall k\in N_0\,, \qquad \sum_{k:(k,i)\in H}\mu_{k,i}=0,\ \forall i\in[d].
    \]
    We call a weighting of the arcs of $H$ \emph{balanced} if the incident weights sum to zero at every node. Therefore, the vertices are affinely independent if and only if the zero weighting is the only balanced weighting of $H$.

    Suppose first that \(H\) is a forest. If \(H\) has an arc, then it has a leaf, and the equation at that leaf forces the weight of its unique incident arc to be zero. Deleting this arc leaves a balanced weighting of the remaining forest. Repeating this argument shows that all arc weights are zero.

    Conversely, suppose that \(H\) contains a simple cycle. Because \(H\) is bipartite,
    the cycle is even. Assigning alternating weights \(1\) and \(-1\) to its arcs, and zero to all other arcs, gives a nonzero balanced weighting $\mu$, because each node of the cycle meets exactly two of its arcs, with opposite weights. Then $\lambda_{k,i}:=\mu_{k,i}/M_k(\alpha)$ is nonzero and satisfies \eqref{eq:forest-comb} and \eqref{eq:forest-aff}, so the vertices are affinely dependent. The result follows.
\end{proof}

\begin{theorem}\label{thm:connectedness}
    For $s \in \{\pm 1\}^d$ and $\alpha \in \mathbb{R}^d_{++}$ with $M_0(\alpha)=1$, the normalized support inequality \ref{FmultiG} is facet describing if and only if its equality graph $G(\alpha)$ is connected.
\end{theorem}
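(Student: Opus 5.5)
The plan is to count affinely independent tight vertices. By Theorem \ref{thm:validity}, \ref{FmultiG} is valid for $\D$, and the face it describes is the convex hull of the tight vertices $\{\hat{\xi}^{k,i}:(k,i)\in E(\alpha)\}$. Since $\D$ is full dimensional in $\mathbb{R}^{d+n}$, the inequality is facet describing exactly when this set contains $d+n$ affinely independent points. By Lemma \ref{lem:forest}, a subset $\{\hat{\xi}^{k,i}:(k,i)\in H\}$ with $H\subset E(\alpha)$ is affinely independent if and only if $H$ is a forest. The maximum number of affinely independent tight vertices therefore equals the maximum size of a forest contained in $E(\alpha)$. That maximum is $|N_0\sqcup[d]| - c = n+1+d-c$, where $c$ is the number of connected components of $G(\alpha)$; it is attained by a spanning forest consisting of one spanning tree per component.

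Hence the face has dimension $n+d-c$. This equals $n+d-1$ if and only if $c=1$, that is, if and only if $G(\alpha)$ is connected. Both directions follow at once:
\begin{itemize}[label={}]
\item \emph{Connected implies facet.} If $G(\alpha)$ is connected, a spanning tree $T\subset E(\alpha)$ has $n+d$ arcs, and Lemma \ref{lem:forest} gives $n+d$ affinely independent tight vertices.
\item \emph{Facet implies connected.} If $G(\alpha)$ is disconnected, every forest in $E(\alpha)$ has at most $n+d+1-c\le n+d-1$ arcs. By Lemma \ref{lem:forest} again, no $n+d$ tight vertices are affinely independent, so the face has dimension at most $n+d-2$ and is not a facet.
\end{itemize}

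The graph-theoretic fact used is standard: a forest on a vertex set $V$ has $|V|$ minus its number of components arcs. A forest inside a graph with $c$ components has at least $c$ components, and a maximal forest attains exactly $c$.

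One detail needs checking, though it is not a real obstacle. No isolated node of $G(\alpha)$ must be mishandled, and the face must be proper. Each $k\in N_0$ has nonempty $I_k(\alpha)$, so every polytope node has degree at least one. A coordinate node $i$ may be isolated, and in that case $G(\alpha)$ is disconnected (when $n+d\ge 2$), so the count above handles it correctly. The face is proper because the vertex $-s_i\hat{x}^{0,i}$ lifted is not tight. The only remaining care is that the dimension of the face equals the maximum number of affinely independent points of the face minus one. Since the face is the convex hull of the tight vertices, this maximum is attained by a subset of those vertices, which justifies restricting to the vertex sets $\{\hat{\xi}^{k,i}\}$.
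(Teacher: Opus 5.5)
Your proposal is correct and follows essentially the same route as the paper: you identify the tight vertices via Theorem~\ref{thm:validity}, use Lemma~\ref{lem:forest} to turn affine independence into acyclicity of arc sets in $E(\alpha)$, and count the arcs of a forest as nodes minus components. The only difference is packaging. You obtain both directions at once from the formula $n+d-c$ for the dimension of the face, whereas the paper proves the converse by taking $n+d$ affinely independent tight vertices and showing that their arc set must be a spanning tree.
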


\begin{proof}
By Theorem \ref{thm:validity}, the vertices of $\D$ satisfying $F(\alpha,s)$
as an equation are precisely $\hat{\xi}^{k,i}$, for $(k,i)\in E(\alpha)$.

Suppose first that $G(\alpha)$ is connected, and let $T$ be a spanning
tree of $G(\alpha)$. Because $G(\alpha)$ has $|N_0|+|[d]|=n+d+1$
nodes, $T$ has $n+d$ arcs. By Lemma \ref{lem:forest}, the
corresponding $n+d$ tight vertices $\hat{\xi}^{k,i}$, $(k,i)\in T$, are
affinely independent, so the face of $\D$ described by $F(\alpha,s)$ has
dimension at least $n+d-1$. Because $\D$ is full dimensional and
$\alpha\neq0$, this face is proper, so its dimension is exactly $n+d-1$,
and $F(\alpha,s)$ is facet describing.

Conversely, suppose that $F(\alpha,s)$ is facet describing. Choose
$n+d$ affinely independent vertices of the associated facet. By
Theorem \ref{thm:validity}, each is $\hat{\xi}^{k,i}$ for a distinct
$(k,i)\in E(\alpha)$; let $H\subset E(\alpha)$ be the set of these $n+d$ arcs.
By Lemma \ref{lem:forest}, $H$ is a forest. Let $V(H)$ denote the set of
nodes incident to the arcs of $H$, and let $c\ge1$ be the number of
connected components of $(V(H),H)$. Because $H$ is a forest,
$|H|=|V(H)|-c$, and because $V(H)\subset N_0\sqcup[d]$, $|V(H)|\le n+d+1$.
Combining these, $n+d=|H|=|V(H)|-c\le n+d+1-c$, so $c=1$ and
$|V(H)|=n+d+1$. Therefore, $H$ is a spanning tree on $N_0\sqcup[d]$, and
because $H\subset E(\alpha)$, the equality graph $G(\alpha)$ is connected.
\end{proof}

We next describe how a spanning tree gives a normalized support inequality.
Let $T$ be a spanning tree on $N_0\sqcup[d]$, rooted at the node $0\in N_0$\,. We
set $M_0^T:=1.$
For each $k\in N$, write the unique path from $0$ to $k$ in $T$ as
\[
    0=k_0-i_1-k_1-i_2-\cdots-i_m-k_m=k,
\]
where $k_0\,,\ldots,k_m\in N_0$ and $i_1\,,\ldots,i_m\in[d]$. We define
\[
    M_k^T
    :=\prod_{q=1}^m
      \frac{\bm{a}_{i_q}^{k_{q-1}}}{\bm{a}_{i_q}^{k_q}}.
\]
For every $q\in[m]$, this definition gives $
    M_{k_{q-1}}^T \bm{a}_{i_q}^{k_{q-1}}
    =M_{k_q}^T \bm{a}_{i_q}^{k_q}\,.$
Then, the path from $0$ to $k$ determines $M_k^T$ from the
normalization $M_0^T=1$.

For each $i\in[d]$, we next define
\[
    \alpha_i^T:=\min_{k\in N_0}M_k^T \bm{a}_i^k\,.
\]
We say that $T$ is a feasible spanning tree if every arc $(k,i)\in T$
attains the minimum defining $\alpha_i^T$\,, that is $\alpha_i^T=M_k^T \bm{a}_i^k\,,\,\text{ for every }(k,i)\in T.$
Equivalently, $T$ is feasible if $M_k^T \bm{a}_i^k\leq M_\ell^T \bm{a}_i^\ell\,, \text{ for every }(k,i)\in T \text{ and every }\ell\in N_0\,.$

\begin{proposition}\label{prop:sptoF}
    Let $T$ be a feasible spanning tree. Then $\alpha^T\in\mathbb{R}^d_{++}$\,,
    $M_k(\alpha^T) = M^T_k$ for $k \in N_0$\,, and $T \subset E(\alpha^T)$.
    Consequently, for every $s \in \{\pm 1\}^d$, $F(\alpha^T,s)$ is a normalized
    support inequality, and it is facet describing.
\end{proposition}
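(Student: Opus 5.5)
The plan is to verify the three claims directly from the definitions and then invoke Theorem \ref{thm:connectedness}.

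\textbf{Positivity of $\alpha^T$.} Each $M_k^T$ is a finite product of ratios of positive numbers $\bm{a}^{k}_{i}$, so $M_k^T>0$ for all $k\in N_0$ (with $M_0^T=1$). Hence $\alpha_i^T=\min_{k}M_k^T\bm{a}_i^k>0$ for every $i\in[d]$, which gives $\alpha^T\in\mathbb{R}^d_{++}$.

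\textbf{The identity $M_k(\alpha^T)=M_k^T$.} By the definition of $\alpha^T$ as a minimum, $\alpha_i^T\le M_k^T\bm{a}_i^k$ for every $k$ and $i$. Thus $\alpha_i^T/\bm{a}_i^k\le M_k^T$, and so $M_k(\alpha^T)\le M_k^T$. For the reverse inequality we need some $i$ attaining equality. Since $T$ is a spanning tree on $N_0\sqcup[d]$ with $d\ge1$, every node $k\in N_0$ is incident to at least one arc $(k,i)\in T$. Feasibility of $T$ gives $\alpha_i^T=M_k^T\bm{a}_i^k$ for that arc, so $\alpha_i^T/\bm{a}_i^k=M_k^T$ and therefore $M_k(\alpha^T)=M_k^T$. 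In particular $M_0(\alpha^T)=M_0^T=1$, so the normalization holds.

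\textbf{The inclusion $T\subset E(\alpha^T)$.} For each $(k,i)\in T$, feasibility gives $\alpha_i^T/\bm{a}_i^k=M_k^T=M_k(\alpha^T)$. This is exactly the defining condition for $(k,i)\in E(\alpha^T)$.

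\textbf{Conclusion.} Since $\alpha^T\in\mathbb{R}^d_{++}$ and $M_0(\alpha^T)=1$, $F(\alpha^T,s)$ is a normalized support inequality for every $s$, and it is valid by Theorem \ref{thm:validity}. The equality graph $G(\alpha^T)$ contains the spanning tree $T$, so it is connected, and Theorem \ref{thm:connectedness} yields that $F(\alpha^T,s)$ is facet describing.

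The only point requiring any care is the second step: showing that the maximum defining $M_k(\alpha^T)$ is actually attained at the value $M_k^T$. This needs every $k$ to have an incident tree arc, which holds because $T$ spans and every $k\in N_0$ has a neighbor in $[d]$. The path-product definition of $M^T$ is not otherwise needed beyond its positivity; the tree consistency relations are encoded in feasibility.
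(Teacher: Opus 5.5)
Your proposal is correct and follows the paper's proof essentially step for step. You get positivity from the path products, the upper bound $M_k(\alpha^T)\le M_k^T$ from the minimum defining $\alpha^T$, the matching lower bound from an incident tree arc together with feasibility, and then $T\subset E(\alpha^T)$, after which Theorems \ref{thm:validity} and \ref{thm:connectedness} finish the argument.
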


\begin{proof}
Each $M_k^T$ is a product of positive ratios, so $M_k^T>0$ for $k\in N_0$\,,
and hence $\alpha_i^T=\min_{\ell\in N_0} M_\ell^T \bm{a}_i^\ell>0$ for $i\in[d]$.
In particular, $\alpha_i^T\leq M_k^T \bm{a}_i^k$ for all $k\in N_0$ and $i\in[d]$,
so $M_k(\alpha^T)=\max_{i\in[d]}\alpha_i^T/\bm{a}_i^k\leq M_k^T$ for all $k\in N_0$\,.
Now fix $k\in N_0$\,. Because $T$ is spanning, $k$ is incident to some arc
$(k,i)\in T$, and feasibility of $T$ gives $\alpha_i^T=M_k^T \bm{a}_i^k$\,. Hence
$M_k(\alpha^T)\geq \alpha_i^T/\bm{a}_i^k=M_k^T$\,, and so $M_k(\alpha^T)=M_k^T$\,.
Moreover, for every $(k,i)\in T$, $\alpha_i^T=M_k^T \bm{a}_i^k=M_k(\alpha^T)\bm{a}_i^k$\,,
so $(k,i)\in E(\alpha^T)$; that is, $T\subset E(\alpha^T)$.

Because $M_0(\alpha^T)=M_0^T=1$, $F(\alpha^T,s)$ is a normalized support
inequality, valid for $\D$ by Theorem \ref{thm:validity}. Because $T$ is a
spanning tree on $N_0\sqcup[d]$ contained in $E(\alpha^T)$, the equality graph
$G(\alpha^T)$ is connected, so $F(\alpha^T,s)$ is facet describing by
Theorem \ref{thm:connectedness}.
\end{proof}

\begin{example}\label{ex:infeasible}
Feasibility cannot be dropped from Proposition~\ref{prop:sptoF}. Let $n=d=2$ and
$\bm{a}^0=(1,1)$, $\bm{a}^1=(1,2)$, $\bm{a}^2=(1,3)$. It helps to arrange the numbers
$M^T_k \bm{a}^k_i$ in a table with rows $k\in N_0$ and columns $i\in[d]$:
the minimum of column~$i$ is $\alpha^T_i$\,, and $T$ is feasible exactly
when every arc $(k,i)\in T$ sits at a minimum of its column.
Below, boxed entries are the arcs of $T$.

First take the path $0-1-1-2-2$, that is,
$T=\{(0,1),(1,1),(1,2),(2,2)\}$. Following the path from $0$ gives
$M^T_0=1$, $M^T_1=\bm{a}^0_1/\bm{a}^1_1=1$, and $M^T_2=M^T_1 \bm{a}^1_2/\bm{a}^2_2=\tfrac23$:
\[
\begin{array}{c|@{\hspace{1em}}c@{\hspace{1.4em}}c}
              & i=1 & i=2 \\ \hline
  \rule{0pt}{13pt}k=0 & \boxed{1} & 1 \\[6pt]
  k=1         & \boxed{1} & \boxed{2} \\[6pt]
  k=2         & \tfrac23  & \boxed{2} \\[5pt] \hline
  \rule{0pt}{13pt}\alpha^{T} & \tfrac23 & 1
\end{array}
\]

No boxed entry is a column minimum, so $T$ is not feasible. Here $M(\alpha^T)$
differs from $M^T$: $M(\alpha^T)=(1,\tfrac23,\tfrac23)$,
whereas $M^T=(1,1,\tfrac23)$. Replacing $M^T_1=1$ by $M_1(\alpha^T)=\tfrac23$ changes
row $k=1$ of the table to $(\tfrac23,\tfrac43)$, and the arcs of $E(\alpha^T)$ are the
column minima of this corrected table, namely
$E(\alpha^T)=\{(0,2),(1,1),(2,1)\}$. So $G(\alpha^T)$ is disconnected: the nodes $0$ and $2\in[d]$ are cut off from the rest. By Theorem~\ref{thm:connectedness},
$F(\alpha^T,s)$ is not facet describing for any $s\in\{\pm1\}^2$.

Now take the path $0-2-1-1-2$, that is,
$T'=\{(0,2),(1,2),(1,1),(2,1)\}$. Then
$M^{T'}=(1,\tfrac12,\tfrac12)$:
\vspace{-10pt}
\[
\begin{array}{c|@{\hspace{1em}}c@{\hspace{1.4em}}c}
              & i=1 & i=2 \\ \hline
  \rule{0pt}{13pt}k=0 & 1 & \boxed{1} \\[6pt]
  k=1         & \boxed{\tfrac12} & \boxed{1} \\[6pt]
  k=2         & \boxed{\tfrac12} & \tfrac32 \\[5pt] \hline
  \rule{0pt}{13pt}\alpha^{T'} & \tfrac12 & 1
\end{array}
\]
Every boxed entry is a column minimum, so $T'$ is feasible, and by
Proposition~\ref{prop:sptoF},
\[
\tfrac12 s_1x_1+s_2x_2+\tfrac12 z_1+\tfrac12 z_2\le 1
\]
is facet describing for every $s\in\{\pm1\}^2$. Because
$M_k(\alpha^{T'})=M^{T'}_k$\,, the arcs of $E(\alpha^{T'})$ are exactly
the column minima, which are exactly the boxed entries. Thus
$E(\alpha^{T'})=T'$, and $T'$ is the only spanning tree giving this
facet. \hfill$\clubsuit$
\end{example}

\begin{proposition}\label{sprecover}
    Let \ref{FmultiG} be a normalized support inequality whose equality graph
    $G(\alpha)$ is connected, and let $T \subset E(\alpha)$ be a spanning tree
    of $G(\alpha)$. Then $M^T_k = M_k(\alpha)$ for all $k \in N_0$\,, and
    $\alpha^T = \alpha$. Therefore, every spanning tree of $G(\alpha)$ is
    feasible and determines $F(\alpha,s)$.
\end{proposition}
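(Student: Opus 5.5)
The plan is to first show $M^T_k=M_k(\alpha)$ for all $k\in N_0$, by induction along the tree paths from the root $0$, and then to read off $\alpha^T=\alpha$ and feasibility from the definition of $M_k(\alpha)$.

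For the base case, $M^T_0=1=M_0(\alpha)$ by the normalization. For $k\in N$, write the unique path in $T$ from $0$ to $k$ as $0=k_0-i_1-k_1-\cdots-i_m-k_m=k$. Each arc $(k_{q-1},i_q)$ and $(k_q,i_q)$ lies in $T\subset E(\alpha)$. Hence
\[
\alpha_{i_q}=M_{k_{q-1}}(\alpha)\bm{a}^{k_{q-1}}_{i_q}=M_{k_q}(\alpha)\bm{a}^{k_q}_{i_q},
\]
so that $M_{k_q}(\alpha)=M_{k_{q-1}}(\alpha)\,\bm{a}^{k_{q-1}}_{i_q}/\bm{a}^{k_q}_{i_q}$. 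This is the same recursion that defines $M^T$ along the path. Inducting on $q$ from $M_{k_0}(\alpha)=1=M^T_{k_0}$ gives $M^T_{k_q}=M_{k_q}(\alpha)$ for all $q$, and in particular $M^T_k=M_k(\alpha)$. Because $T$ is spanning, this path exists for every $k$.

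Next, fix $i\in[d]$. By definition of $M_k(\alpha)$ as a maximum, we have $\alpha_i/\bm{a}^k_i\le M_k(\alpha)$, that is, $\alpha_i\le M_k(\alpha)\bm{a}^k_i=M^T_k\bm{a}^k_i$ for every $k\in N_0$. So $\alpha_i\le\alpha^T_i$. Because $T$ is spanning, node $i$ is incident to some arc $(k,i)\in T\subset E(\alpha)$. For that $k$, $\alpha_i=M_k(\alpha)\bm{a}^k_i=M^T_k\bm{a}^k_i\ge\alpha^T_i$. Hence $\alpha^T=\alpha$.

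Feasibility follows from the same inequalities. For any $(k,i)\in T$ we have $M^T_k\bm{a}^k_i=M_k(\alpha)\bm{a}^k_i=\alpha_i=\alpha^T_i$, so every arc of $T$ attains the column minimum, and $T$ is feasible. Since $\alpha^T=\alpha$, the inequality $F(\alpha^T,s)$ coincides with $F(\alpha,s)$. This holds for every $s$, because the coefficients $M_k(\alpha^T)=M_k(\alpha)$ agree.

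There is no real obstacle here. The only point needing care is that the arcs along the path are all in $E(\alpha)$, which is what makes the path recursion for $M^T$ coincide with the true values $M_k(\alpha)$. In the infeasible-tree situation of Example~\ref{ex:infeasible}, this identity fails precisely because the arcs of $T$ do not lie in $E(\alpha^T)$.
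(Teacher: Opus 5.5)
Your proof is correct and follows essentially the same route as the paper. You propagate $M^T_k=M_k(\alpha)$ along the tree path from $0$ using $T\subset E(\alpha)$, then get $\alpha^T=\alpha$ from $\alpha_i\le M^T_k\bm{a}^k_i$ for all $k$ together with equality at an arc of $T$ incident to $i$, after which feasibility and $F(\alpha^T,s)=F(\alpha,s)$ are immediate.
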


\begin{proof}
By definition, $M_0^T=1$, and by normalization, $M_0(\alpha)=1$. For $k\in N$,
write the unique path from $0$ to $k$ in $T$ as
$0=k_0-i_1-k_1-i_2-\cdots-i_m-k_m=k.$
For every $q\in[m]$, both $(k_{q-1}\,,i_q)$ and $(k_q\,,i_q)$ are arcs of $T$, and
hence belong to $E(\alpha)$. Then
\[
M_{k_{q-1}}(\alpha)\,\bm{a}_{i_q}^{k_{q-1}}=\alpha_{i_q}=M_{k_q}(\alpha)\,\bm{a}_{i_q}^{k_q}\,,
\]
and therefore
\[
M_{k_q}(\alpha)=M_{k_{q-1}}(\alpha)\,\frac{\bm{a}_{i_q}^{k_{q-1}}}{\bm{a}_{i_q}^{k_q}}\,.
\]
Iterating along the path, starting from $M_0(\alpha)=1$, gives
\[
M_k(\alpha)=\prod_{q=1}^{m}\frac{\bm{a}_{i_q}^{k_{q-1}}}{\bm{a}_{i_q}^{k_q}}=M_k^T\,.
\]

It remains to show that $\alpha^T=\alpha$. For every $i\in[d]$ and
$k\in N_0$\,, the definition of $M_k(\alpha)$ gives
$\alpha_i\leq M_k(\alpha)\bm{a}_i^k=M_k^T \bm{a}_i^k\,.$
Taking the minimum over $k\in N_0$ yields $\alpha_i\leq\alpha_i^T\,.$
Because $T$ is spanning, $i$ is incident to some arc $(k_i\,,i)\in T$.
Because $T\subset E(\alpha)$, $\alpha_i=M_{k_i}(\alpha)\bm{a}_i^{k_i}=M_{k_i}^T \bm{a}_i^{k_i}\,.$
Consequently, $\alpha_i^T=\min_{k\in N_0}M_k^T \bm{a}_i^k \leq M_{k_i}^T \bm{a}_i^{k_i}=\alpha_i\,.$
Thus, $\alpha_i^T=\alpha_i$ for every $i\in[d]$, and hence $\alpha^T=\alpha$.

Finally, for every $(k,i)\in T$,
$\alpha_i^T=\alpha_i=M_k(\alpha) \bm{a}_i^k=M_k^T \bm{a}_i^k\,.$
Therefore, $T$ is feasible, and $F(\alpha^T,s)=F(\alpha,s)$.
\end{proof}

\begin{example}\label{ex:spcalc}
    We illustrate Proposition \ref{sprecover}. Let $n=1$, $d=2$, $(\bm{a}_1^0\,,\bm{a}_2^0)=(1,3)$ and $(\bm{a}_1^1\,,\bm{a}_2^1)=(3,9)$, so that $\Po_1=\frac13\Po_0$\,. Also, let $\alpha=(1,3)$. Then $
    M_0(\alpha)=1$ and $M_1(\alpha) =\max\left\{\frac{\alpha_1}{\bm{a}_1^1}, \frac{\alpha_2}{\bm{a}_2^1}\right\}
    =\frac{1}{3}.$ Consequently,
    $\alpha_i=M_k(\alpha)\bm{a}_i^k\,,\text{ for every }(k,i)\in N_0\times[2].$ Then, $E(\alpha)=N_0\times[2]$ and $G(\alpha)=K_{2,2}$\,. Its four spanning trees are
\[
\begin{aligned}
    T_1&=\{(0,1),(1,1),(0,2)\},&
    T_2&=\{(0,2),(1,2),(0,1)\},\\
    T_3&=\{(0,2),(1,2),(1,1)\},&
    T_4&=\{(0,1),(1,1),(1,2)\}.
\end{aligned}
\]

For each $j\in[4]$, the unique path from $0$ to $1$ in $T_j$ passes
through either the node $1 \in [2]$ or the node $2 \in [2]$. Because $
    \frac{\bm{a}_1^0}{\bm{a}_1^1}
    =\frac{\bm{a}_2^0}{\bm{a}_2^1}
    =\frac{1}{3},$ we have $
    M_0^{T_j}=1,
    M_1^{T_j}=\frac{1}{3}.$ It follows that $
    \alpha_1^{T_j}
    =\min\left\{1,\frac{1}{3}\cdot 3\right\}=1,
    \alpha_2^{T_j}
    =\min\left\{3,\frac{1}{3}\cdot 9\right\}=3.$
Then, $\alpha^{T_j}=(1,3)$ for every $j$, and every arc of $T_j$ satisfies $
    \alpha_i^{T_j}=M_k^{T_j}\bm{a}_i^k$\,, which is the feasibility condition, so each $T_j$ is feasible.
Therefore, $T_1\,,T_2\,,T_3$\,, and $T_4$ are distinct feasible spanning
trees that define the same support inequality \ref{FmultiG}.
\hfill $\clubsuit$
\end{example}

\begin{corollary}\label{cor:treefacets}
    The facet-describing normalized support inequalities of $\D$ are exactly
    the inequalities $F(\alpha^T,s)$, for feasible spanning trees $T$ and
    $s\in\{\pm1\}^d$. Moreover, if $F(\alpha,s)$ is facet describing, then every
    spanning tree of $G(\alpha)$ is feasible and represents $F(\alpha,s)$.
\end{corollary}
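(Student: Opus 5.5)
The corollary follows from Theorem \ref{thm:connectedness} and Propositions \ref{prop:sptoF} and \ref{sprecover}. We prove the two inclusions of the first sentence, and the second sentence comes out along the way.

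\emph{Every tree inequality is a facet.} Let $T$ be a feasible spanning tree and $s\in\{\pm1\}^d$. Proposition \ref{prop:sptoF} gives $\alpha^T\in\mathbb{R}^d_{++}$ and $M_0(\alpha^T)=M_0^T=1$. So $F(\alpha^T,s)$ is a normalized support inequality, and the same proposition says it is facet describing.

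\emph{Every facet is a tree inequality.} Suppose the normalized support inequality $F(\alpha,s)$ describes a facet. By Theorem \ref{thm:connectedness}, $G(\alpha)$ is connected, so it has a spanning tree $T\subset E(\alpha)$. Since $G(\alpha)$ has node set $N_0\sqcup[d]$, $T$ is a spanning tree on $N_0\sqcup[d]$ in the sense of the graph conventions. Proposition \ref{sprecover} then shows that $T$ is feasible and $\alpha^T=\alpha$. Hence $F(\alpha,s)=F(\alpha^T,s)$ with the same $s$. This proves the reverse inclusion.

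\emph{The second sentence.} The argument above used an arbitrary spanning tree of $G(\alpha)$. So Proposition \ref{sprecover}, applied to each spanning tree $T$ of $G(\alpha)$, shows that every such $T$ is feasible and satisfies $F(\alpha^T,s)=F(\alpha,s)$.

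The only point needing care is the identification of inequalities. Two normalized support inequalities with the same $\alpha$ and $s$ coincide, because their $z$-coefficients $1-M_k(\alpha)$ are determined by $\alpha$. Thus $\alpha^T=\alpha$ is enough. Beyond this, the argument is bookkeeping.
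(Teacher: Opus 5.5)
Your proposal is correct and follows the same approach as the paper's proof. You use Proposition~\ref{prop:sptoF} for the forward inclusion, and Theorem~\ref{thm:connectedness} combined with Proposition~\ref{sprecover} for the converse and for the ``moreover'' statement.
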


\begin{proof}
By Proposition \ref{prop:sptoF}, each $F(\alpha^T,s)$ is facet describing.
Conversely, if $F(\alpha,s)$ is facet describing, then $G(\alpha)$ is
connected by Theorem \ref{thm:connectedness}, and the claim follows from
Proposition \ref{sprecover}.
\end{proof}

\begin{theorem}\label{thm:completedescrip}
    Let $\mathcal{T}$ denote the collection of feasible spanning trees. Then, the normalized support inequalities $F(\alpha^T, s)$ for $T \in \mathcal{T}$, $s \in \{\pm 1\}^d$, together with the non-vertical facet inequalities $\sum_{k \in N}z_k \leq 1, z_k \geq 0\,, \forall k \in N$, give a complete description of the convex hull $\D$.
\end{theorem}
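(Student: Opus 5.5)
Since $\D$ is full dimensional, it is described by its facet-describing inequalities. So it suffices to show that every facet of $\D$ is described either by one of the non-vertical inequalities $\sum_{k\in N}z_k\le1$, $z_k\ge0$, or by some $F(\alpha^T,s)$ with $T\in\mathcal T$. By Corollary \ref{cor:treefacets}, the second family is exactly the set of facet-describing normalized support inequalities. The plan is therefore: take an arbitrary facet-describing inequality $\beta^\top x+\gamma^\top z\le\delta$ of $\D$, and show that it is either non-vertical or a positive multiple of some $F(\alpha,s)$ with $\alpha\in\mathbb{R}^d_{++}$ and $M_0(\alpha)=1$.

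The first step is to handle zero coefficients in $\beta$. Suppose $\beta_i=0$ for some $i$ but $\beta\neq0$. I would use the coordinate symmetry $x_i\mapsto -x_i$ of $\D$ together with the fact that each $\Po_k$ is a cross polytope. The value of $\beta^\top x$ on $\bar{\Po}_k$ is maximized at the vertices $\pm\hat x^{k,j}$ with $|\beta_j|\bm{a}^k_j{}^{-1}$ maximal. Hence the vertices $\hat x^{k,i}$ with $\beta_i=0$ can be tight only when $\max_j|\beta_j|/\bm{a}^k_j=0$, which is impossible since $\beta\ne0$. So the tight vertices avoid both $\pm\hat x^{k,i}$ for every $k$.

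I would then count affinely independent tight vertices, with a reduction in the spirit of Lemma \ref{lem:forest}. Every tight vertex is still of the form $\genfrac(){0pt}{1}{\pm\hat x^{k,j}}{\mathbf e_k}$ with $j\neq i$. Its $x_i$-coordinate is therefore $0$, so all tight points lie in the hyperplane $x_i=0$. That hyperplane is not the facet hyperplane, since $\beta\neq0$ and the facet hyperplane is determined up to scaling. This contradicts the facet having dimension $n+d-1$. Hence either $\beta=0$ or $\beta_i\neq0$ for all $i$.

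If $\beta=0$, the inequality is non-vertical. It is a facet of the projection of $\D$ onto $z$, which is the simplex $\conv\{\mathbf e_k:k\in N_0\}$. Its facets are exactly $z_k\ge0$ and $\sum z_k\le1$.

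If every $\beta_i\ne0$, set $s_i:=\sign\beta_i$ and $\alpha':=(|\beta_i|)_i\in\mathbb{R}^d_{++}$. Maximizing $\beta^\top x$ over $\bar{\Po}_k$ gives $M_k(\alpha')$, so validity on the vertices yields $M_k(\alpha')+\gamma_k\le\delta$ for $k\in N$ and $M_0(\alpha')\le\delta$. Tightness must hold for some vertex in each $\bar{\Po}_k$, since otherwise the facet would lie in the face $z_k=0$ (or $\sum z=1$ for $k=0$). That would make two distinct facet hyperplanes coincide, which is impossible unless the inequality is itself that non-vertical one, and it isn't because $\beta\ne0$. Hence $\delta=M_0(\alpha')$ and $\gamma_k=M_0(\alpha')-M_k(\alpha')$. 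Dividing by $M_0(\alpha')>0$ gives exactly $F(\alpha,s)$ with $\alpha=\alpha'/M_0(\alpha')$, and Corollary \ref{cor:treefacets} finishes the proof.

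The main obstacle is the tightness-on-every-$\bar{\Po}_k$ argument. I would phrase it cleanly: if no vertex of $\bar{\Po}_k$ is tight, then every tight vertex satisfies the non-vertical equation ($z_k=0$, or $\sum_{k\in N}z_k=1$ when $k=0$). Thus the facet is contained in a proper face described by a different hyperplane, contradicting maximality. The zero-coefficient step is handled by the same containment argument.
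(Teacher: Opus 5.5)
Your proposal is correct and is essentially the paper's proof. It uses the same containment argument to show that a facet missing some $\bar{\Po}_k$ is $z_k\ge0$ or $\sum_{k\in N}z_k\le1$, the same $\{x_i=0\}$ argument to rule out a zero coordinate in a nonzero $\beta$, and tightness on every $\bar{\Po}_k$ to pin down $\delta$ and $\gamma_k$ before normalizing and invoking Corollary~\ref{cor:treefacets}. The only differences are the order of the steps and your treatment of $\beta=0$ by projecting onto the simplex, where the paper instead excludes $c=0$ directly from $h_0(c)=\delta$ and $h_k(c)+\gamma_k=\delta$. Your projection step needs one more line: the face has dimension at most $d$ plus that of its projection. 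The symmetry $x_i\mapsto -x_i$ and the Lemma~\ref{lem:forest}-style count are never used and can be dropped.
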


\begin{proof}
Let $\mathcal{Q}$ be the polyhedron defined by the inequalities in the statement.
Because all of these inequalities are valid for $\D$, we have $\D\subset\mathcal{Q}$.
Because $\D$ is full dimensional (see \cite[Theorem 2]{QL_DAM2025}), it is the
intersection of the halfspaces of its facet-describing inequalities, each of which is
unique up to positive scaling. So it suffices to show that every facet-describing
inequality of $\D$ appears, up to positive scaling, in the description of $\mathcal{Q}$.

Let $c^\top x+\sum_{k\in N}\gamma_kz_k\leq\delta$ describe a facet of $\D$. The facet is
the convex hull of its vertices, which lie in $\bigcup_{k\in N_0}\bar{\Po}_k$\,. If the
facet contains no point of $\bar{\Po}_k$ for some $k\in N$, then all of its vertices
satisfy $z_k=0$, so it is contained in, and hence equal to, the facet described by
$z_k\geq0$. Similarly, if it contains no point of $\bar{\Po}_0$\,, then it equals the
facet described by $\sum_{k\in N}z_k\leq1$. In either case, the inequality appears in
the description of $\mathcal{Q}$. So we may assume that the facet contains a point of
every $\bar{\Po}_k$\,, $k\in N_0$\,.

For $k\in N_0$\,, let $h_k(c):=\max_{x\in\Po_k}c^\top x=\max_{i\in[d]}|c_i|/\bm{a}_i^k$\,.
Validity and the assumption give $h_0(c)=\delta$ and $h_k(c)+\gamma_k=\delta$ for all
$k\in N$. If $c=0$, then $\delta=0$ and $\gamma=0$, which is impossible. So $c\neq0$, and
then $h_k(c)>0$ for every $k\in N_0$\,; in particular, $\delta=h_0(c)>0$.

We next show that $c_i\neq0$ for every $i\in[d]$. Suppose that $c_i=0$ for some
$i\in[d]$. At each of the two vertices of $\bar{\Po}_k$ with nonzero $x_i$-coordinate,
the left-hand side equals $\delta-h_k(c)<\delta$, so these vertices are not tight. Hence
every vertex of the facet satisfies $x_i=0$, and the facet lies in the intersection of
the hyperplanes $\{x_i=0\}$ and $\{c^\top x+\gamma^\top z=\delta\}$. These are distinct,
because $c\neq0$ and $c_i=0$, so the facet has dimension at most $n+d-2$, a
contradiction.

Define $\bar{c}:=c/\delta$, $\alpha_i:=|\bar{c}_i|$, and $s_i:=\sign(\bar{c}_i)$ for
$i\in[d]$. Then $\alpha\in\mathbb{R}^d_{++}$\,, $\bar{c}_i=s_i\alpha_i$\,, and
$M_k(\alpha)=h_k(c)/\delta$ for all $k\in N_0$\,; in particular, $M_0(\alpha)=1$. Also,
$\gamma_k/\delta=1-M_k(\alpha)$ for every $k\in N$. Therefore, dividing the
facet-describing inequality by $\delta$ gives $F(\alpha,s)$. By
Corollary \ref{cor:treefacets}, there is a feasible spanning tree $T$ with
$F(\alpha^T,s)=F(\alpha,s)$, which appears in the description of $\mathcal{Q}$.
\end{proof}

\section{Pairwise inequalities as support inequalities}\label{sec:pairwise}

We now relate the pairwise inequalities of Section \ref{sec:prior} to the normalized
support inequalities of Section \ref{sec:spanning}. Throughout, we use the ratios
$r^{k_1\,,k_2}_j$\,, the blocks $\mathcal{L}^{k_1\,,k_2}(j)$, $\mathcal{E}^{k_1\,,k_2}(j)$,
$\mathcal{G}^{k_1\,,k_2}(j)$, the coefficients $\alpha^{k_1\,,k_2}(j)$ and the quantities
$M^{k_1\,,k_2}_k(j)$ from Section \ref{sec:prior}, and the active sets $I_k(\alpha)$ and
the equality graph $G(\alpha)$ from Section \ref{sec:spanning}. By definition,
$M^{k_1\,,k_2}_k(j)=M_k(\alpha^{k_1\,,k_2}(j))$ for $k\in N_0$\,. We write
\[
\tilde{\alpha}^{k_1\,,k_2}(j):=\frac{\alpha^{k_1\,,k_2}(j)}{M^{k_1\,,k_2}_0(j)}
\]
for the normalized coefficient vector, so that $M_0(\tilde{\alpha}^{k_1\,,k_2}(j))=1$.
In Section \ref{sec:prior}, pairwise inequalities were defined for $k_1<k_2$\,, but the
same formulas make sense for any distinct $k_1\,,k_2\in N_0$\,. Because
$\alpha^{k_2\,,k_1}(j)=\min\{\bm{a}^{k_2},\,(\bm{a}^{k_2}_j/\bm{a}^{k_1}_j)\,\bm{a}^{k_1}\}
=\frac{1}{r^{k_1\,,k_2}_j}\,\alpha^{k_1\,,k_2}(j)$ (componentwise), interchanging $k_1$ and
$k_2$ does not change $\tilde{\alpha}^{k_1\,,k_2}(j)$, so we may allow any distinct
$k_1\,,k_2$\,. We say that a normalized support inequality $F(\alpha,s)$ is
\emph{pairwise} if it is a positive multiple of $F(k_1\,,k_2\,,j,s)$ for some distinct
$k_1\,,k_2\in N_0$ and $j\in[d]$.

\begin{proposition}\label{prop:specialcase}
    For distinct $k_1\,,k_2 \in N_0$\,, $j \in [d]$, and $s \in \{\pm 1\}^d$, let
    $\tilde{\alpha}:=\tilde{\alpha}^{k_1\,,k_2}(j)$. Then the pairwise inequality
    \ref{Fkalpha} is $M^{k_1\,,k_2}_0(j)$ times the normalized support inequality
    $F(\tilde{\alpha},s)$, and $G(\tilde{\alpha})$ is connected. In particular, both
    describe the same facet of $\D$.
\end{proposition}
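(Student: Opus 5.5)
The plan has three steps: check the scaling, show that $G(\tilde\alpha)$ is connected, and then invoke Theorem \ref{thm:connectedness}. Write $\alpha:=\alpha^{k_1,k_2}(j)$ and $M_k:=M^{k_1,k_2}_k(j)=M_k(\alpha)$.

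\textbf{Scaling.} Because each $M_k(\cdot)$ is positively homogeneous, $M_k(\tilde\alpha)=M_k/M_0$. Hence
\[
M_0\Bigl(\textstyle\sum_i s_i\tilde\alpha_i x_i+\sum_{k\in N}(1-M_k/M_0)z_k\Bigr)\le M_0
\]
is exactly \ref{Fkalpha}. Since $M_0>0$, the two inequalities describe the same face. Also $E(\tilde\alpha)=E(\alpha)$, because the active sets are invariant under positive scaling. So it suffices to show that $G(\alpha)$ is connected.

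\textbf{Active sets of $k_1$ and $k_2$.} Set $r:=r^{k_1,k_2}_j$, so that $\alpha_i=\min\{\bm a^{k_1}_i,\,r\bm a^{k_2}_i\}$.
\begin{itemize}
\item For $k_1$: we have $\alpha_i/\bm a^{k_1}_i\le 1$, with equality exactly when $\bm a^{k_1}_i\le r\bm a^{k_2}_i$, that is, when $i\in\mathcal L\cup\mathcal E$. This set contains $j$, so $M_{k_1}=1$ and $I_{k_1}(\alpha)=\mathcal L\cup\mathcal E$.
\item For $k_2$: symmetrically, $M_{k_2}=r$ and $I_{k_2}(\alpha)=\mathcal E\cup\mathcal G$.
\end{itemize}
Thus $k_1$ is adjacent in $G(\alpha)$ to every $i\in\mathcal L\cup\mathcal E$, and $k_2$ to every $i\in\mathcal E\cup\mathcal G$. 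Since $j\in\mathcal E$, both are adjacent to $j$. Therefore $\{k_1,k_2\}\cup[d]$ lies in a single component of $G(\alpha)$.

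\textbf{Remaining nodes $k\in N_0\setminus\{k_1,k_2\}$.} Each such $k$ has a nonempty active set $I_k(\alpha)\subset[d]$, so it is adjacent to some coordinate node. It therefore joins the same component, and $G(\alpha)$ is connected.

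\textbf{Conclusion.} By Theorem \ref{thm:connectedness}, $F(\tilde\alpha,s)$ is facet describing. Because \ref{Fkalpha} is the positive multiple $M_0$ of it, both describe the same facet, consistent with Theorem \ref{thm:facet}.

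The main thing to verify carefully is the computation $I_{k_1}=\mathcal L\cup\mathcal E$ and $I_{k_2}=\mathcal E\cup\mathcal G$, since this is what shows every coordinate node is joined to $k_1$ or $k_2$. The rest is bookkeeping about scaling invariance of $E(\cdot)$.
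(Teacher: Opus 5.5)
Your proposal is correct and follows essentially the same route as the paper's proof. Both divide by $M^{k_1,k_2}_0(j)$ using positive homogeneity, identify $I_{k_1}=\mathcal{L}\cup\mathcal{E}$ and $I_{k_2}=\mathcal{E}\cup\mathcal{G}$ with $k_1,k_2$ joined through $j$, attach each remaining $l$ via its nonempty active set, and then invoke Theorem \ref{thm:connectedness}.
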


\begin{proof}
Write $\alpha:=\alpha^{k_1\,,k_2}(j)$ and $r_j:=r^{k_1\,,k_2}_j$\,. Because $M_k$ is positively
homogeneous, $M_k(\tilde{\alpha})=M_k(\alpha)/M_0(\alpha)
=M^{k_1\,,k_2}_k(j)/M^{k_1\,,k_2}_0(j)$ for $k\in N_0$\,. Dividing \ref{Fkalpha} by
$M^{k_1\,,k_2}_0(j)>0$ therefore gives exactly $F(\tilde{\alpha},s)$.

For every $i \in [d]$, $\alpha_i \leq \bm{a}^{k_1}_i$ and $\alpha_i \leq r_j \bm{a}^{k_2}_i$\,, with
equality in both at $i=j$. Hence $M_{k_1}(\alpha) = 1$ and $M_{k_2}(\alpha) = r_j$\,, and
\[
I_{k_1}(\alpha) = \mathcal{L}^{k_1\,,k_2}(j)\cup\mathcal{E}^{k_1\,,k_2}(j), \qquad
I_{k_2}(\alpha) = \mathcal{E}^{k_1\,,k_2}(j)\cup\mathcal{G}^{k_1\,,k_2}(j).
\]
Scaling does not change which indices attain a maximum, so
$I_k(\tilde{\alpha})=I_k(\alpha)$ for $k\in N_0$ and $G(\tilde{\alpha})=G(\alpha)$. In
$G(\tilde{\alpha})$, the nodes $k_1$ and $k_2$ are joined through the node
$j\in\mathcal{E}^{k_1\,,k_2}(j)$, and every node $i\in[d]$ is adjacent to $k_1$ or to
$k_2$\,. Every node $l\in N_0\setminus\{k_1\,,k_2\}$ is adjacent to the nodes of the
nonempty set $I_l(\tilde{\alpha})\subset[d]$. Therefore, $G(\tilde{\alpha})$ is
connected, and $F(\tilde{\alpha},s)$ is facet describing by
Theorem \ref{thm:connectedness}.
\end{proof}

\begin{remark}
Proposition \ref{prop:specialcase}, together with Theorem \ref{thm:connectedness}, gives
a new proof of Theorem \ref{thm:facet}.
\end{remark}

\begin{remark}\label{rem:pairwisetree}
The proof of Proposition \ref{prop:specialcase} shows that
\begin{align*}
E(\tilde{\alpha})&=\{(k_1\,,i): i\in \mathcal{L}^{k_1\,,k_2}(j)\cup\mathcal{E}^{k_1\,,k_2}(j)\}\\
&\quad\cup\{(k_2\,,i): i\in \mathcal{E}^{k_1\,,k_2}(j)\cup\mathcal{G}^{k_1\,,k_2}(j)\}\\
&\quad\cup\bigcup_{l\in N_0\setminus\{k_1\,,k_2\}}\{(l,i): i\in I_l(\tilde{\alpha})\}.
\end{align*}
A spanning tree of $G(\tilde{\alpha})$ is obtained by taking the path $k_1-j-k_2$\,,
attaching each $i\in\mathcal{L}^{k_1\,,k_2}(j)$ to $k_1$\,, each
$i\in\mathcal{G}^{k_1\,,k_2}(j)$ to $k_2$\,, and each
$i\in\mathcal{E}^{k_1\,,k_2}(j)\setminus\{j\}$ to one of $k_1\,,k_2$\,, and attaching each
$l\in N_0\setminus\{k_1\,,k_2\}$ to one node $i_l\in I_l(\tilde{\alpha})$. By
Proposition \ref{sprecover}, every such tree represents the pairwise inequality; the
tree is not unique when $|\mathcal{E}^{k_1\,,k_2}(j)|>1$ or $|I_l(\tilde{\alpha})|>1$ for
some $l$.
\end{remark}

Proposition \ref{prop:specialcase} shows that every pairwise inequality is a normalized
support inequality with connected equality graph. The next result characterizes which
normalized support inequalities arise in this way.

\begin{theorem}\label{thm:conditionforpairwise}
Let $s\in\{\pm1\}^d$ and $\alpha\in\mathbb{R}^d_{++}$ with $M_0(\alpha)=1$. Then
$F(\alpha,s)$ is pairwise if and only if there exist two distinct nodes
$k_1\,,k_2\in N_0$ such that $I_{k_1}(\alpha) \cap I_{k_2}(\alpha) \neq \emptyset$ and
$I_{k_1}(\alpha) \cup I_{k_2}(\alpha) = [d]$.
\end{theorem}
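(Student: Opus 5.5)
The plan is to prove both directions by comparing $\alpha$ directly with the normalized pairwise vector $\tilde{\alpha}^{k_1,k_2}(j)$. Throughout I use that $F(\alpha,s)$ is pairwise exactly when $\alpha=\tilde{\alpha}^{k_1,k_2}(j)$ for some distinct $k_1,k_2$ and some $j$. The reason is that a normalized support inequality is determined by $\alpha$: the $x$-coefficients are $s_i\alpha_i$, the normalization $M_0=1$ fixes the right-hand side, and the $z$-coefficients are functions of $\alpha$. Proposition \ref{prop:specialcase} shows that the pairwise inequality is a positive multiple of $F(\tilde{\alpha}^{k_1,k_2}(j),s)$. So two normalized inequalities that are positive multiples of each other have equal right-hand sides, and hence are identical.

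For the forward direction, suppose $\alpha=\tilde{\alpha}^{k_1,k_2}(j)$. The computation in the proof of Proposition \ref{prop:specialcase} gives
\[
I_{k_1}(\alpha)=\mathcal{L}^{k_1,k_2}(j)\cup\mathcal{E}^{k_1,k_2}(j), \qquad I_{k_2}(\alpha)=\mathcal{E}^{k_1,k_2}(j)\cup\mathcal{G}^{k_1,k_2}(j).
\]
The intersection of these two sets contains $j$, and their union is $[d]$, because $\mathcal{L},\mathcal{E},\mathcal{G}$ partition $[d]$. This direction is therefore immediate.

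For the converse, suppose $k_1\neq k_2$ satisfy $j\in I_{k_1}(\alpha)\cap I_{k_2}(\alpha)$ and $I_{k_1}(\alpha)\cup I_{k_2}(\alpha)=[d]$. Write $M_1:=M_{k_1}(\alpha)$ and $M_2:=M_{k_2}(\alpha)$.

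\emph{Step 1.} By the definition of $M_k$, $\alpha_i\le M_1\bm{a}^{k_1}_i$ and $\alpha_i\le M_2\bm{a}^{k_2}_i$ for every $i$. Every $i$ attains equality in at least one of them, because $i$ lies in $I_{k_1}(\alpha)\cup I_{k_2}(\alpha)$. Hence
\[
\alpha_i=\min\{M_1\bm{a}^{k_1}_i,\;M_2\bm{a}^{k_2}_i\}\quad\text{for all } i\in[d].
\]

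\emph{Step 2.} Since $j$ is in both active sets, $M_1\bm{a}^{k_1}_j=M_2\bm{a}^{k_2}_j$, so $M_2/M_1=\bm{a}^{k_1}_j/\bm{a}^{k_2}_j=r^{k_1,k_2}_j$. Factoring out $M_1$ then gives
\[
\alpha=M_1\min\{\bm{a}^{k_1},\,r^{k_1,k_2}_j\bm{a}^{k_2}\}=M_1\,\alpha^{k_1,k_2}(j)
\]
componentwise.

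\emph{Step 3.} Both $\alpha$ and $\tilde{\alpha}^{k_1,k_2}(j)$ are positive multiples of $\alpha^{k_1,k_2}(j)$, and both satisfy $M_0=1$. Positive homogeneity of $M_0$ forces $\alpha=\tilde{\alpha}^{k_1,k_2}(j)$, so $F(\alpha,s)$ is $1/M^{k_1,k_2}_0(j)$ times $F(k_1,k_2,j,s)$, i.e., pairwise.

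I expect no serious obstacle. The only care needed is in the opening identification: one must use the normalization to pass from ``positive multiple of a pairwise inequality'' to equality of the coefficient vectors. One must also use the remark preceding the theorem, that $\tilde{\alpha}^{k_1,k_2}(j)$ is symmetric in $k_1,k_2$, so the ordering $k_1<k_2$ in the original definition can be ignored.
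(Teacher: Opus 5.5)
Your proposal is correct and follows essentially the same argument as the paper. The forward direction reads off $I_{k_1}(\alpha)$ and $I_{k_2}(\alpha)$ from the proof of Proposition~\ref{prop:specialcase}, and the converse shows $\alpha_i=\min\{M_{k_1}(\alpha)\bm{a}^{k_1}_i,\,M_{k_2}(\alpha)\bm{a}^{k_2}_i\}=M_{k_1}(\alpha)\,\alpha^{k_1,k_2}_i(j)$ via $M_{k_2}(\alpha)=r^{k_1,k_2}_j M_{k_1}(\alpha)$, then uses $M_0(\alpha)=1$ to get $\alpha=\tilde{\alpha}^{k_1,k_2}(j)$; your remark that two normalized inequalities differing by a positive factor coincide (both right-hand sides are $1$) just spells out the paper's ``comparing the coefficients of $x$ and using $M_0(\alpha)=1$.''
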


\begin{proof}
    $(\Rightarrow)$ If $F(\alpha,s)$ is a positive multiple of $F(k_1\,,k_2\,,j,s)$, then
    comparing the coefficients of $x$ and using $M_0(\alpha)=1$ gives
    $\alpha=\tilde{\alpha}^{k_1\,,k_2}(j)$. By the proof of
    Proposition \ref{prop:specialcase},
    $I_{k_1}(\alpha) \cup I_{k_2}(\alpha) = [d]$ and
    $j \in I_{k_1}(\alpha) \cap I_{k_2}(\alpha)$.

    $(\Leftarrow)$ Suppose that there exist two distinct nodes $k_1\,,k_2\in N_0$ such
    that $I_{k_1}(\alpha) \cap I_{k_2}(\alpha) \neq \emptyset$ and
    $I_{k_1}(\alpha) \cup I_{k_2}(\alpha) = [d]$. Choose
    $j \in I_{k_1}(\alpha) \cap I_{k_2}(\alpha)$, and let $r_j:=r^{k_1\,,k_2}_j$\,. Then
    $\alpha_j = M_{k_1}(\alpha) \bm{a}^{k_1}_j = M_{k_2}(\alpha) \bm{a}^{k_2}_j$\,, so
    $M_{k_2}(\alpha)=r_j M_{k_1}(\alpha)$. Now take any $i \in [d]$. By the definition of
    $M_{k_1}(\alpha)$ and $M_{k_2}(\alpha)$,
    $\alpha_i \leq \min \{ M_{k_1}(\alpha) \bm{a}^{k_1}_i\,,\, M_{k_2}(\alpha) \bm{a}^{k_2}_i\}$, and
    equality holds because $i \in I_{k_1}(\alpha) \cup I_{k_2}(\alpha)$. Hence
    \[
    \alpha_i = M_{k_1}(\alpha) \min \{ \bm{a}^{k_1}_i\,,\, r_j \bm{a}^{k_2}_i\}
    = M_{k_1}(\alpha)\,\alpha^{k_1\,,k_2}_i(j), \quad\mbox{for all } i\in[d].
    \]
    So $\alpha$ is a positive multiple of $\alpha^{k_1\,,k_2}(j)$, and because
    $M_0(\alpha)=1$, $\alpha=\tilde{\alpha}^{k_1\,,k_2}(j)$. By
    Proposition \ref{prop:specialcase}, $F(\alpha,s)$ is pairwise.
\end{proof}

\begin{corollary}\label{cor:n1}
For $n=1$, every facet-describing normalized support inequality is pairwise.
Consequently, Theorem \ref{thm:completedescrip} recovers Theorem \ref{thm:complete}.
\end{corollary}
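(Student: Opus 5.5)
For $n=1$ we have $N_0=\{0,1\}$, and the plan is to apply Theorem \ref{thm:conditionforpairwise} with $k_1=0$ and $k_2=1$. Two things must be checked for a facet-describing $F(\alpha,s)$: that $I_0(\alpha)\cap I_1(\alpha)\neq\emptyset$, and that $I_0(\alpha)\cup I_1(\alpha)=[d]$. Both should follow from connectivity of $G(\alpha)$, which holds by Theorem \ref{thm:connectedness}.

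For the union, every node $i\in[d]$ has at least one neighbor in the connected graph $G(\alpha)$, since the node set contains at least the two nodes $0,1$ together with $i$. The only possible neighbors of $i$ are $0$ and $1$, and $i$ is adjacent to $k$ exactly when $i\in I_k(\alpha)$. Hence $i\in I_0(\alpha)\cup I_1(\alpha)$.

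For the intersection, connectivity gives a path from node $0$ to node $1$. Because the graph is bipartite with only two nodes on the $N_0$ side, any shortest such path has the form $0-i-1$ for some $i\in[d]$. That $i$ lies in $I_0(\alpha)\cap I_1(\alpha)$.

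Theorem \ref{thm:conditionforpairwise} then shows that $F(\alpha,s)$ is a positive multiple of $F(0,1,j,s)$ for some $j$, which proves the first claim.

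For the consequence, Theorem \ref{thm:completedescrip} says $\D$ is described by the facet-describing inequalities $F(\alpha^T,s)$ together with $0\le z_1$ and $z_1\le 1$. Since $\sum_{k\in N}z_k=z_1$ when $n=1$, these last two are exactly the non-vertical inequalities of Theorem \ref{thm:complete}. By the first part, each $F(\alpha^T,s)$ is a positive multiple of some $F(0,1,j,s)$. Conversely, every $F(0,1,j,s)$ is valid for $\D$ by Theorem \ref{thm:facet}. So the system of Theorem \ref{thm:complete} defines the same polyhedron as the system of Theorem \ref{thm:completedescrip}, namely $\D$.

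There is no real obstacle here. The only point needing care is noting that in $F(0,1,j,s)$ the ordering $k_1<k_2$ is automatically $0<1$, so the inequality is literally one of those listed in Theorem \ref{thm:complete}.
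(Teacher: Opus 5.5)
Your proposal is correct and follows the paper's proof essentially verbatim. Connectivity of $G(\alpha)$ (Theorem \ref{thm:connectedness}) gives $I_0(\alpha)\cup I_1(\alpha)=[d]$, and a path $0-i-1$ witnesses $I_0(\alpha)\cap I_1(\alpha)\neq\emptyset$, after which Theorem \ref{thm:conditionforpairwise} applies. The only cosmetic difference is in reaching $F(0,1,j,s)$: the paper cites the $k_1\leftrightarrow k_2$ symmetry of $\tilde{\alpha}^{k_1\,,k_2}(j)$, whereas you take it directly from the $(\Leftarrow)$ construction with $k_1=0$, $k_2=1$ (the theorem's statement alone only gives ``pairwise'').
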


\begin{proof}
Let $F(\alpha,s)$ be facet describing, so that $G(\alpha)$ is connected by
Theorem \ref{thm:connectedness}. Because $N_0=\{0,1\}$, every node $i\in[d]$ is
adjacent to $0$ or $1$, so $I_0(\alpha)\cup I_1(\alpha)=[d]$. The nodes $0$ and $1$ are
joined by a path, which alternates between $N_0$ and $[d]$, so it has the form
$0-i-1$ for some $i\in I_0(\alpha)\cap I_1(\alpha)$. By
Theorem \ref{thm:conditionforpairwise}, $F(\alpha,s)$ is pairwise, and by the symmetry
noted above, it is a positive multiple of $F(0,1,j,s)$ for some $j\in[d]$. Therefore,
the description in Theorem \ref{thm:completedescrip} consists of the inequalities
$F(0,1,j,s)$ and $0\leq z_1\leq1$, which is Theorem \ref{thm:complete}.
\end{proof}

\begin{corollary}\label{cor:d3}
For $d\leq3$ and every $n\geq1$, every facet-describing normalized support inequality is
pairwise. Consequently, for $d\leq3$, the pairwise inequalities $F(k_1\,,k_2\,,j,s)$, for
$k_1<k_2$ in $N_0$\,, $j\in[d]$ and $s\in\{\pm1\}^d$, together with the non-vertical
inequalities $z_k\geq0$, for $k\in N$, and $\sum_{k\in N}z_k\leq1$, describe $\D$.
\end{corollary}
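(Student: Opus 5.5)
The plan is to reduce the statement to a purely combinatorial fact about the connected bipartite graph $G(\alpha)$, and then invoke Theorem \ref{thm:conditionforpairwise}. Let $F(\alpha,s)$ be facet describing. By Theorem \ref{thm:connectedness}, $G(\alpha)$ is connected, and every $I_k(\alpha)$ is nonempty. It then suffices to find distinct $k_1,k_2\in N_0$ with $I_{k_1}(\alpha)\cap I_{k_2}(\alpha)\neq\emptyset$ and $I_{k_1}(\alpha)\cup I_{k_2}(\alpha)=[d]$.

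The key observation is this. Any path in $G(\alpha)$ between two coordinate nodes $i\neq i'$ alternates sides, so consecutive coordinate nodes on it are joined through some $k\in N_0$ with both of them in $I_k(\alpha)$. Hence connectivity of $G(\alpha)$ implies that the auxiliary graph $\Gamma$ on $[d]$ is connected, where $\Gamma$ joins $i$ and $i'$ whenever some $I_k(\alpha)$ contains both. Only the $k$ with $|I_k(\alpha)|\ge2$ contribute edges to $\Gamma$. I will split into two cases.

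\emph{Case 1: some $I_k(\alpha)=[d]$.} Pair this $k$ with any other $k'\in N_0$; such a $k'$ exists since $n\ge1$. Then $I_{k'}(\alpha)$ is nonempty and contained in $[d]=I_k(\alpha)$, so the two sets intersect and their union is $[d]$.

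\emph{Case 2: no $I_k(\alpha)$ equals $[d]$.} This case covers all remaining situations, and each $d$ is handled as follows.
\begin{itemize}
\item For $d=1$, every $I_k(\alpha)=\{1\}$, so Case 1 always applies.
\item For $d=2$, connectivity of $\Gamma$ forces some $I_k(\alpha)\supseteq\{1,2\}$, so Case 1 again applies.
\item For $d=3$, every $I_k(\alpha)$ has size at most $2$. The edges of $\Gamma$ then come from the $2$-element sets $I_k(\alpha)$. A connected graph on three vertices has at least two distinct edges, and these share a vertex. Choose $k_1,k_2$ realizing two distinct such edges; they are distinct nodes since their sets differ. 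Then $I_{k_1}(\alpha)\cap I_{k_2}(\alpha)\neq\emptyset$ and $I_{k_1}(\alpha)\cup I_{k_2}(\alpha)=[3]$.
\end{itemize}
In all cases Theorem \ref{thm:conditionforpairwise} shows that $F(\alpha,s)$ is pairwise.

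For the consequence, combine this with Theorem \ref{thm:completedescrip}. By that theorem, $\D$ is described by the facet-describing normalized support inequalities together with the non-vertical ones. Each facet-describing normalized support inequality is a positive multiple of some $F(k_1,k_2,j,s)$. By the symmetry noted before Proposition \ref{prop:specialcase}, interchanging $k_1$ and $k_2$ does not change $\tilde\alpha^{k_1,k_2}(j)$, so we may take $k_1<k_2$. Conversely, all pairwise inequalities are valid facet inequalities by Theorem \ref{thm:facet}, so including all of them does not change the described set. The only step requiring care is the translation from connectivity of $G(\alpha)$ to connectivity of $\Gamma$, and the check that the two chosen $k$'s are distinct. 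Both are straightforward, so I expect no serious obstacle.
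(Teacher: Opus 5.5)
Your proof is correct and follows essentially the same route as the paper: use Theorem \ref{thm:connectedness} and Theorem \ref{thm:conditionforpairwise} to reduce to finding two overlapping active sets that cover $[d]$, dispose of the case $I_k(\alpha)=[d]$, and for $d=3$ use connectivity of $G(\alpha)$ to find two distinct two-element active sets that meet. Your auxiliary graph $\Gamma$ on $[d]$ is only a repackaging of the paper's degree argument (some $|I_k(\alpha)|\ge2$) and its path argument from $i_3$, and you derive the consequence exactly as the paper does, from Theorem \ref{thm:completedescrip} and the symmetry in $k_1,k_2$.
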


\begin{proof}
Let $F(\alpha,s)$ be facet describing, so that $G(\alpha)$ is connected by
Theorem \ref{thm:connectedness}. By Theorem \ref{thm:conditionforpairwise}, it suffices
to find distinct $k,k'\in N_0$ with $I_k(\alpha)\cap I_{k'}(\alpha)\neq\emptyset$ and
$I_k(\alpha)\cup I_{k'}(\alpha)=[d]$. If $d=1$, then $I_k(\alpha)=\{1\}$ for every
$k\in N_0$\,, and any two distinct $k,k'$ will do. Let $d\in\{2,3\}$. If
$|I_k(\alpha)|=1$ for every $k\in N_0$\,, then every node of $N_0$ has degree $1$ in
$G(\alpha)$, so each connected component contains at most one node of $[d]$,
contradicting connectedness. Choose $k$ with $|I_k(\alpha)|\geq2$. If
$I_k(\alpha)=[d]$, then any $k'\neq k$ will do, because $I_{k'}(\alpha)\neq\emptyset$.
Otherwise, $d=3$ and $I_k(\alpha)=\{i_1\,,i_2\}$, say. Consider a path in $G(\alpha)$ from
$i_3$ to $i_1$\,, and let $i_3-k'-i'$ be its first two arcs. Then $i'\neq i_3$\,, so
$i'\in I_k(\alpha)$, and $k'\neq k$, because $i_3\notin I_k(\alpha)$. Therefore
$i'\in I_k(\alpha)\cap I_{k'}(\alpha)$ and $I_k(\alpha)\cup I_{k'}(\alpha)=[3]$. The last
statement follows from Theorem \ref{thm:completedescrip} and the symmetry in $k_1\,,k_2$
noted at the beginning of this section.
\end{proof}

Example \ref{ex:nonpairwise} below shows that Corollary \ref{cor:d3} does not extend to
$d=4$.

For $n>1$, the conditions of Theorem \ref{thm:conditionforpairwise} can fail for a
facet, as the next example, which is \cite[Example 21]{qu2026disjunctionconvexhullsgeneralized},
shows.

\begin{example}\label{ex:nonpairwise}
    Let $n=2$, $d=4$, and
    \[\Po_0 = \{ x \in \mathbb{R}^4 ~: \pm x_1 \pm x_2 \pm 10 x_3 \pm 10 x_4 \leq 1\},\]
    \[\Po_1 = \{ x \in \mathbb{R}^4 ~: \pm 10 x_1 \pm \frac{1}{2} x_2 \pm \frac{1}{2} x_3 \pm 10 x_4 \leq 1\},\]
    \[\Po_2 = \{ x \in \mathbb{R}^4 ~: \pm 10 x_1 \pm 10 x_2 \pm \frac{1}{3} x_3 \pm \frac{1}{3} x_4 \leq 1\}.\]
    Choose $\alpha = (1,1,1,1)$ and $s=(1,1,1,1)$. Then
    \[
\renewcommand{\arraystretch}{1.2}
\begin{array}{c@{\qquad}c@{\qquad}c@{\qquad}c}
\toprule
k
  & \left(\alpha_i/\bm{a}_i^k\right)_{i=1}^4
  & M_k(\alpha)
  & I_k(\alpha)\\
\midrule
0 & (1,1,0.1,0.1) & 1 & \{1,2\}\\
1 & (0.1,2,2,0.1) & 2 & \{2,3\}\\
2 & (0.1,0.1,3,3) & 3 & \{3,4\}\\
\bottomrule
\end{array}
\]
    In particular, $M_0(\alpha)=1$, and $F(\alpha,s)$ is
    \[x_1+x_2+x_3+x_4-z_1-2z_2 \leq 1.\]
    Writing $\hat{i}$ for the node $i\in[4]$, the equality graph $G(\alpha)$ is the path
    $\hat{1} - 0 - \hat{2} - 1 - \hat{3} - 2 - \hat{4}$ shown in
    Figure~\ref{fig:nonpairwiseequalitygraphexample}. It is connected, so $F(\alpha,s)$ is
    facet describing by Theorem \ref{thm:connectedness}. However, no pair of active sets
    both intersects and covers $[4]$:
\begin{align*}
  I_0(\alpha)\cap I_1(\alpha)&=\{2\},
  & I_0(\alpha)\cup I_1(\alpha)&=\{1,2,3\}\neq[4],\\
  I_1(\alpha)\cap I_2(\alpha)&=\{3\},
  & I_1(\alpha)\cup I_2(\alpha)&=\{2,3,4\}\neq[4],\\
  I_0(\alpha)\cap I_2(\alpha)&=\varnothing,
  & I_0(\alpha)\cup I_2(\alpha)&=[4].
\end{align*}
    So, by Theorem \ref{thm:conditionforpairwise}, $F(\alpha,s)$ is not pairwise.
\hfill $\clubsuit$
\end{example}

\begin{figure}[htbp]
    \centering
    \begin{tikzpicture}[
        d/.style={
            circle,
            draw,
            minimum size=7mm,
            inner sep=0pt
        },
        N0/.style={
            rectangle,
            draw,
            fill=gray!15,
            minimum size=7mm,
            inner sep=0pt
        }
    ]
        \node[d] (i1) at (0,1.2)   {$\hat{1}$};
        \node[d] (i2) at (2,1.2)   {$\hat{2}$};
        \node[d] (i3) at (4,1.2)   {$\hat{3}$};
        \node[d] (i4) at (6,1.2)   {$\hat{4}$};

        \node[N0] (k0) at (1,0) {$0$};
        \node[N0] (k1) at (3,0) {$1$};
        \node[N0] (k2) at (5,0) {$2$};

        \draw
            (i1) -- (k0) -- (i2) -- (k1)
            -- (i3) -- (k2) -- (i4);
    \end{tikzpicture}
    \caption{The equality graph $G(\alpha)$ of Example~\ref{ex:nonpairwise}. White circles are the nodes $\hat{i}$, $i \in [4]$, and grey squares are the nodes $k \in N_0 = \{0,1,2\}$.}
    \label{fig:nonpairwiseequalitygraphexample}
\end{figure}

\section{Regular subdivisions and the number of facets}\label{sec:regular}

We next relate the normalized vertical facets of $\D$ to the maximal cells of a
regular subdivision of a product of simplices. We use this correspondence to give
a sharp upper bound on the number of facets of $\D$, attained for generic data.

\medskip
\noindent {\bf Logarithmic coordinates.}
Throughout this section, we set $w_{k,i}:=\log \bm{a}^k_i$\,, for $(k,i)\in N_0\times[d]$.
Let $\alpha\in\mathbb{R}^d_{++}$ with $M_0(\alpha)=1$, and set $\eta_i:=\log\alpha_i$ and
$u_k:=\log M_k(\alpha)$, so that $u_0=0$. Because $\alpha_i\leq M_k(\alpha)\bm{a}^k_i$ for all
$(k,i)$, with equality exactly for $(k,i)\in E(\alpha)$, we have
\begin{equation}\tag{$*$}\label{logrelation}
\eta_i - u_k \leq w_{k,i}\,,\quad \mbox{for all } (k,i) \in N_0 \times [d],
\end{equation}
with equality if and only if $(k,i)\in E(\alpha)$.
We will interpret \eqref{logrelation} in terms of the lower faces of a lifted product
of simplices.

\medskip
\noindent {\bf Regular subdivisions of a product of simplices.}
Define
\[
\Delta_n:=
\left\{
p \in \mathbb{R}^{N_0}_+:
\sum_{k\in N_0}p_k=1
\right\},
\qquad
\Delta_{d-1}:=
\left\{
q \in \mathbb{R}^d_+:
\sum_{i\in[d]}q_i=1
\right\}.
\]
For $k\in N_0$\,, let $p^k\in\mathbb{R}^{N_0}$ be the unit vector indexed by $k$, and for
$i\in[d]$, let $q^i\in\mathbb{R}^d$ be the $i$-th unit vector; these are the vertices of
$\Delta_n$ and $\Delta_{d-1}$\,, respectively. The product
$Q:=\Delta_n\times\Delta_{d-1}$ has dimension $n+d-1$ and vertex set
\[
V(Q)=
\left\{
v_{k,i}:=(p^k,q^i):
(k,i)\in N_0\times[d]
\right\}.
\]
We recall the lifting construction for regular subdivisions; see
\cite{DeloeraRambuSantos2010} for more details. A \emph{subdivision} of $Q$ is a finite
collection of polytopes, called \emph{cells}, that covers $Q$, is closed under taking
faces, and has the property that the intersection of any two cells is either empty or
a common face of both. A cell is \emph{maximal} if it is not properly contained in
another cell; the maximal cells are full dimensional in $Q$. A subdivision is a
\emph{triangulation} if every maximal cell is a simplex.

For heights $w\in\mathbb{R}^{N_0\times[d]}$, lift each vertex $v_{k,i}$ to
$(v_{k,i}\,,w_{k,i})$, and let
$\overline{Q}_w:=\conv\left\{(v_{k,i}\,,w_{k,i}):(k,i)\in N_0\times[d]\right\}$.
For an affine function $\ell:\aff(Q) \to \mathbb{R}$ satisfying
$\ell(v_{k,i})\le w_{k,i}$ for all $(k,i)\in N_0\times[d]$, the set
$\conv\{(v_{k,i}\,,w_{k,i}):\ell(v_{k,i})=w_{k,i}\}$ is a \emph{lower face} of
$\overline{Q}_w$\,, and its projection
$C_\ell:=\conv\{v_{k,i}:\ell(v_{k,i})=w_{k,i}\}$ is a cell. These cells, together with
their faces, form the \emph{regular subdivision} $\mathscr{S}(w)$ of $Q$ induced by $w$.
Its maximal cells are the projections of the lower facets of $\overline{Q}_w$\,. We write
$\mathscr{S}:=\mathscr{S}(w)$ when $w_{k,i}=\log \bm{a}^k_i$\,.

\begin{lemma}\label{lem:uniquerepre}
Every affine function $\ell:\aff(Q)\to \mathbb{R}$ can be represented as
$\ell(v_{k,i})=\eta_i-u_k$\,,
for some $u\in \mathbb{R}^{N_0}$ and $\eta\in \mathbb{R}^d$. With $u_0=0$, this representation is unique.
\end{lemma}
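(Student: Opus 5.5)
The plan is to work with the affine hull of $Q$ directly and to reduce both claims to linear algebra on the vertices $v_{k,i}$. First I will identify $\aff(Q)$. Since $Q=\Delta_n\times\Delta_{d-1}$, its affine hull is
\[
A:=\Bigl\{(p,q)\in\mathbb{R}^{N_0}\times\mathbb{R}^d:\ \textstyle\sum_{k}p_k=1,\ \sum_i q_i=1\Bigr\},
\]
which has dimension $n+d-1$. An affine function $\ell$ on $A$ is determined by its values on any affinely spanning set, and $V(Q)$ spans $A$ affinely. Every affine function on $A$ is the restriction of a linear function $(p,q)\mapsto \beta^\top p+\gamma^\top q$ on $\mathbb{R}^{N_0}\times\mathbb{R}^d$. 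Indeed, any affine function on the ambient space is linear plus a constant $c$, and on $A$ the constant equals $c\sum_i q_i$, which can be absorbed into $\gamma$. Evaluating at a vertex gives $\ell(v_{k,i})=\beta_k+\gamma_i$. Setting $u_k:=-\beta_k$ and $\eta_i:=\gamma_i$ then yields existence.

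To impose $u_0=0$, I will use the shift invariance: replacing $(u,\eta)$ by $(u+t\mathbf 1,\ \eta+t\mathbf 1)$ leaves every value $\eta_i-u_k$ unchanged. Choosing $t=-u_0$ gives a representation with $u_0=0$.

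For uniqueness, suppose two representations with $u_0=u'_0=0$ agree on all of $V(Q)$. Their difference $(\delta u,\delta\eta)$ satisfies $\delta\eta_i=\delta u_k$ for all $(k,i)$. Taking $k=0$ gives $\delta\eta_i=0$ for every $i$, and then $\delta u_k=\delta\eta_1=0$ for every $k$. Since an affine function on $A$ is determined by its values on $V(Q)$, which affinely spans $A$, the pair $(u,\eta)$ is unique.

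An alternative route is a dimension count. The map $(u,\eta)\mapsto(\eta_i-u_k)_{k,i}$, restricted to $u_0=0$, goes from $\mathbb{R}^{n+d}$ into the space of affine functions on $A$, which has dimension $\dim A+1=n+d$. The argument above shows this map is injective, hence it is bijective, which gives existence and uniqueness simultaneously. The only point needing any care is the verification that $\aff(Q)=A$ and that affine functions on $A$ take the form $\beta_k+\gamma_i$ on vertices. This is routine, so I do not expect a genuine obstacle.
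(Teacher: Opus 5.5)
Your proof is correct and follows essentially the same route as the paper. Both proofs write $\ell$ as a linear function on the ambient space by absorbing the constant through one of the identities $\sum_k p_k=1$ or $\sum_i q_i=1$, read off $\ell(v_{k,i})$ as a $k$-term plus an $i$-term, normalize to $u_0=0$ by a common shift, and get uniqueness by evaluating first at the $v_{0,i}$ and then at any $v_{k,i}$. The differences are minor: the paper absorbs the constant into the $p$-part and builds the shift directly into $u_k:=r_0-r_k$, $\eta_i:=r_0+t_i$, and your dimension-count alternative is an extra that is not needed.
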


\begin{proof}
We may write any affine function on $\aff(Q)$ as $\ell(p,q) = r^\top p + t^\top q + c$.
Because $\sum_{k\in N_0}p_k = 1$ on $\aff(Q)$, we may absorb $c$ into $r$, and so assume
that $\ell(p,q)=r^\top p+t^\top q$. Setting $u_k:=r_0-r_k$ and $\eta_i:=r_0+t_i$ gives
$u_0=0$ and $\ell(v_{k,i})=r_k+t_i=\eta_i-u_k$\,. For uniqueness, the values at $v_{0,i}$\,,
for $i \in [d]$, give $\eta_i = \ell(v_{0,i})$, and then, for each $k \in N_0$\,, the value
at any $v_{k,i}$ gives $u_k = \eta_i - \ell(v_{k,i})$.
\end{proof}

For a nonempty arc set $E \subset N_0\times[d]$, define $C_E:=\conv\{v_{k,i}:(k,i)\in E\}$.
Let $K_E:=\{k\in N_0:(k,i)\in E\text{ for some }i\in[d]\}$ and
$I_E:=\{i\in[d]:(k,i)\in E\text{ for some }k\in N_0\}$, let $G(E)$ be the bipartite graph
with node set $V(E):=K_E \sqcup I_E$ and arc set $E$, and let $c(G(E))$ denote its number
of connected components.

\begin{lemma}\label{lem:dimcell}
For every nonempty $E \subset N_0 \times [d]$, $\dim C_E=|K_E|+|I_E|-c(G(E))-1.$
In particular, $C_E$ is full dimensional in $Q$ if and only if $G(E)$ is connected
and has node set $N_0\sqcup[d]$.
\end{lemma}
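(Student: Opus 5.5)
The plan is to compute $\dim C_E$ as one less than the rank of the vectors $(v_{k,i},1)$, $(k,i)\in E$, in $\mathbb{R}^{N_0}\times\mathbb{R}^d\times\mathbb{R}$. Equivalently, it is the rank of the vectors $v_{k,i}=(p^k,q^i)$ themselves minus one. The homogenizing coordinate is redundant, because every $v_{k,i}$ lies in the hyperplane $\sum_k p_k=1$, which does not pass through the origin. The vector $(p^k,q^i)$ is exactly the column of the (unsigned) incidence matrix of the bipartite graph $G(E)$ corresponding to the arc $(k,i)$, so I will compute the rank of that incidence matrix.

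The key step is the standard fact that the unsigned incidence matrix of a bipartite graph with $|V|$ nodes and $c$ components has rank $|V|-c$. I will argue this through the left kernel, meaning vectors $y$ on the nodes with $y_k+y_i=0$ for every arc $(k,i)\in E$. On each component, fixing the value at one node determines all the others: the value is $t$ on the $N_0$-side and $-t$ on the $[d]$-side. Bipartiteness makes this assignment consistent, so the kernel restricted to the nodes $V(E)$ has dimension exactly $c(G(E))$. Rows indexed by nodes outside $V(E)$ are zero and do not affect the rank. Hence the rank is $|K_E|+|I_E|-c(G(E))$, and $\dim C_E=|K_E|+|I_E|-c(G(E))-1$.

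For the second claim, recall that $\dim Q=n+d-1$. Since $|K_E|\le n+1$, $|I_E|\le d$ and $c\ge1$, the dimension $\dim C_E$ equals $n+d-1$ if and only if $|K_E|=n+1$, $|I_E|=d$ and $c=1$. That is exactly the condition that $G(E)$ is connected with node set $N_0\sqcup[d]$.

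The only point needing care is the reduction from affine dimension to linear rank; I expect no real obstacle beyond it. This parallels the balanced-weighting argument in Lemma~\ref{lem:forest}.
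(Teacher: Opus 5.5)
Your proof is correct and follows the paper's argument. Both reduce $\dim C_E$ to the rank of the unoriented incidence matrix of $G(E)$ minus one, using that the $v_{k,i}$ lie on $\sum_k p_k=1$, and both compute that rank as $|K_E|+|I_E|-c(G(E))$ from the left kernel. The only cosmetic difference is that the paper first flips the signs of the $[d]$-rows to pass to the oriented incidence matrix, whose left kernel consists of vectors constant on components, whereas you solve $y_k+y_i=0$ directly with the alternating values $t,-t$, which is the same observation.
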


\begin{proof}
Let $A_E$ be the matrix whose columns are the vectors $v_{k,i}=(p^k,q^i)$, for $(k,i) \in E$,
restricted to the rows indexed by $K_E\sqcup I_E$ (the other rows are zero); $A_E$ is
the unoriented incidence matrix of $G(E)$. Orienting every arc from $k$ to $i$, let
$B_E$ be the oriented incidence matrix, whose column for $(k,i)$ is
$\genfrac(){0pt}{1}{p^k}{-q^i}$ restricted to the same rows. With
$R := \operatorname{diag}(I_{|K_E|}\,, -I_{|I_E|})$, we have $R A_E = B_E$\,, so
$\operatorname{rank}(A_E) = \operatorname{rank}(B_E^\top)$. For
$y \in \mathbb{R}^{K_E\sqcup I_E}$, the entry of $B_E^\top y$ for the arc $(k,i)$ is
$y_k - y_i$\,, so $B_E^\top y = 0$ if and only if $y$ is constant on each connected
component of $G(E)$. Therefore $\dim\ker(B_E^\top) = c(G(E))$, and
$\operatorname{rank}(A_E) = |K_E| + |I_E| - c(G(E))$. Because every $v_{k,i}$ lies in the
affine hyperplane $\sum_{k \in N_0} p_k = 1$, which does not contain the origin,
$\dim C_E=\operatorname{rank}(A_E)-1$. Finally, $\dim Q=n+d-1$, and
$|K_E|+|I_E|-c(G(E))-1=n+d-1$ holds if and only if $|K_E|+|I_E|=n+d+1$ and
$c(G(E))=1$.
\end{proof}

For a cell $C$ of $\mathscr{S}$, define $E(C):=\{(k,i)\in N_0\times[d]:v_{k,i}\in C\}$ and
$G(C):=G(E(C))$.

\begin{theorem}\label{facetcellbijection}
For each $s\in\{\pm1\}^d$, the map $F(\alpha,s) \longmapsto C(\alpha):= \conv\{v_{k,i}:(k,i)\in E(\alpha)\}$ is a bijection between the facet-describing normalized support inequalities of $\D$ with sign vector $s$ and the maximal cells of $\mathscr{S}$, and $G(C(\alpha))=G(\alpha)$. Moreover, the inverse is explicit: if $C$ is a maximal cell with lower supporting affine
function $\ell_C(v_{k,i})=\eta_i-u_k$\,, where $u_0=0$, then the corresponding facet is $F(\alpha,s)$ with $\alpha_i=e^{\eta_i}$, and $M_k(\alpha)=e^{u_k}$\,.
\end{theorem}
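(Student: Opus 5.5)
The plan is to transport everything through the logarithmic dictionary \eqref{logrelation} and Lemma \ref{lem:uniquerepre}. Fix $s$. Given a facet-describing $F(\alpha,s)$, set $\eta_i:=\log\alpha_i$ and $u_k:=\log M_k(\alpha)$, so $u_0=0$. Define the affine function $\ell$ on $\aff(Q)$ by $\ell(v_{k,i})=\eta_i-u_k$; it exists by Lemma \ref{lem:uniquerepre}. Then \eqref{logrelation} says exactly that $\ell(v_{k,i})\le w_{k,i}$ for all $(k,i)$, with equality precisely on $E(\alpha)$. Hence $\ell$ is a lower supporting function, and $C_\ell=\conv\{v_{k,i}:(k,i)\in E(\alpha)\}=C(\alpha)$ is a cell of $\mathscr{S}$.

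To show this cell is maximal, I will verify $E(C(\alpha))=E(\alpha)$. This holds because the vertices of $Q$ lying in $C_\ell$ are exactly those whose lifts are tight for $\ell$. One direction is immediate. For the other, any vertex $v_{k,i}$ of $Q$ in $C_\ell$ is a vertex of $Q$, hence not a convex combination of other points of $Q$, so it must be one of the generators. This also gives $G(C(\alpha))=G(\alpha)$. Since $G(\alpha)$ is connected on $N_0\sqcup[d]$ by Theorem \ref{thm:connectedness}, Lemma \ref{lem:dimcell} shows $C(\alpha)$ is full dimensional, hence a maximal cell.

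For the inverse, let $C$ be a maximal cell with lower supporting function $\ell_C$. Write $\ell_C(v_{k,i})=\eta_i-u_k$ with $u_0=0$, uniquely by Lemma \ref{lem:uniquerepre}. Set $\alpha_i:=e^{\eta_i}>0$. The inequality $\eta_i-u_k\le w_{k,i}$ gives $\alpha_i/\bm{a}^k_i\le e^{u_k}$. The main point is that equality is attained for each $k$. Since $C$ is full dimensional, Lemma \ref{lem:dimcell} gives that $G(E(C))$ has node set $N_0\sqcup[d]$, so every $k$ has some tight arc. Thus $M_k(\alpha)=e^{u_k}$, in particular $M_0(\alpha)=1$, and $E(\alpha)$ equals the tight set $E(C)$. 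Because $E(C)$ gives a connected graph, $F(\alpha,s)$ is facet describing by Theorem \ref{thm:connectedness}, and its image is $C$.

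It remains to check that the two maps compose to the identity. Going from cell to inequality to cell returns $\conv\{v_{k,i}:(k,i)\in E(C)\}=C$. Going from inequality to cell to inequality recovers $\alpha$, because the supporting function of a maximal cell is determined by its $n+d$ affinely spanning tight points; uniqueness in Lemma \ref{lem:uniquerepre} then fixes $(\eta,u)$. Injectivity also follows directly, since $C(\alpha)$ determines $E(\alpha)$, and a spanning tree in it determines $\alpha$ by Proposition \ref{sprecover}.

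The main obstacle I expect is the identification $E(C_\ell)=\{\text{tight arcs}\}$ together with uniqueness of the supporting function. Both rest on the vertex argument above and on full dimensionality, which forces any affine function agreeing with $w$ on $C$'s vertices to be unique.
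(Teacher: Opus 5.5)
Your proposal is correct and follows the paper's proof essentially step for step. You use the logarithmic dictionary \eqref{logrelation} to get a cell, Lemma \ref{lem:dimcell} for maximality, and Lemma \ref{lem:uniquerepre} together with full dimensionality for uniqueness of the lower supporting function. Your explicit vertex argument that $E(C_\ell)$ equals the tight set, and your alternative injectivity via Proposition \ref{sprecover}, fill in a step the paper leaves implicit. One small correction: the existence of an affine $\ell$ with $\ell(v_{k,i})=\eta_i-u_k$ is not what Lemma \ref{lem:uniquerepre} asserts. It is immediate instead because $(p,q)\mapsto\eta^\top q-u^\top p$ is linear, which is how the paper justifies it.
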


\begin{proof}
We construct two maps and show that they are mutually inverse.

\smallskip
\noindent \emph{From a facet to a cell.} Let $F(\alpha,s)$ be facet describing, and define $\eta$ and $u$ as in \eqref{logrelation}. Because $(p,q)\mapsto \eta^\top q-u^\top p$ is linear, $\ell_\alpha(v_{k,i}) := \eta_i - u_k$ extends to an affine function on $\aff(Q)$, and by \eqref{logrelation}, it lower-supports $\overline{Q}_w$\,, with equality precisely for $(k,i) \in E(\alpha)$. So $C(\alpha)$ is a cell of $\mathscr{S}$. By Theorem \ref{thm:connectedness}, $G(\alpha)$ is connected, and it has node set $N_0 \sqcup [d]$, so, by Lemma \ref{lem:dimcell}, $C(\alpha)$ is full dimensional, hence a maximal cell. This defines a map $\Phi : F(\alpha,s) \mapsto C(\alpha)$, and $G(C(\alpha))=G(E(\alpha))=G(\alpha)$.

\smallskip
\noindent \emph{From a cell to a facet.} Let $C$ be a maximal cell of $\mathscr{S}$. There is an affine function $\ell_C$ with $\ell_C(v_{k,i}) \le w_{k,i}$ for every $(k,i)$, and $v_{k,i} \in C$ if and only if $\ell_C(v_{k,i}) = w_{k,i}$\,. By Lemma \ref{lem:uniquerepre}, we may write $\ell_C(v_{k,i}) = \eta_i - u_k$\,, with $u_0 = 0$. Define $M_k := e^{u_k} > 0$ and $\alpha_i := e^{\eta_i} > 0$; exponentiating gives $\alpha_i \le M_k \bm{a}^k_i$ for every $(k,i)$, with equality if and only if $(k,i)\in E(C)$. Because $C$ is full dimensional, Lemma \ref{lem:dimcell} shows that $G(C)$ is connected with node set $N_0 \sqcup [d]$. In particular, every $k \in N_0$ is incident to some arc of $G(C)$, so $M_k = \max_{i \in [d]} \alpha_i/\bm{a}^k_i = M_k(\alpha)$, and $M_0(\alpha)=e^{u_0}=1$. Therefore $E(\alpha)=E(C)$, and $G(\alpha)=G(C)$ is connected, so $F(\alpha,s)$ is facet describing by Theorem \ref{thm:connectedness}. This defines a map $\Psi : C \mapsto F(\alpha,s)$.

\smallskip
\noindent \emph{$\Phi$ and $\Psi$ are inverse.} We have shown that $\Phi(\Psi(C))=C(\alpha)=\conv\{v_{k,i}:(k,i)\in E(C)\}=C$. Conversely, the lower supporting function of the maximal cell $C(\alpha)$ is uniquely determined, because $C(\alpha)$ is full dimensional, and by Lemma \ref{lem:uniquerepre} its normalized representation is $\eta_i-u_k$ with $\eta=\log\alpha$ and $u=\log M(\alpha)$. Hence $\Psi(\Phi(F(\alpha,s))) = F(\alpha,s)$.
\end{proof}

\begin{remark}
In the logarithmic coordinates of \eqref{logrelation}, the definitions of $M_k(\alpha)$ and
of $\alpha^T$ read $u_k=\max_i(\eta_i-w_{k,i})$ and $\eta_i=\min_k(u_k+w_{k,i})$; these are
min-plus (tropical) matrix--vector products with the matrix $w$. Regular subdivisions of
$\Delta_n\times\Delta_{d-1}$ are dual to arrangements of tropical hyperplanes and
describe the combinatorics of the tropical polytope determined by $w$; see
\cite{DevelinSturmfels2004}. Theorem \ref{facetcellbijection} thus identifies the
vertical facets of $\D$ with the maximal cells of this tropical picture.
\end{remark}

\medskip
\noindent {\bf Cycles.}
A \emph{simple cycle} is a cycle $C: k_1 - i_1 - k_2 - i_2 - \cdots - k_r - i_r - k_1$ of the
complete bipartite graph on $N_0\sqcup[d]$, with $r\geq2$, distinct
$k_1\,,\ldots,k_r\in N_0$ and distinct $i_1\,,\ldots,i_r\in[d]$. Setting $k_{r+1}:=k_1$\,, we
split its arcs into the two alternating sets
$C^+:=\{(k_t\,,i_t): t\in[r]\}$ and $C^-:=\{(k_{t+1}\,,i_t): t\in[r]\}$, and we define the
\emph{cycle sum}
\[
\delta_C(w):=\sum_{(k,i)\in C^+}w_{k,i}-\sum_{(k,i)\in C^-}w_{k,i}
=\sum_{t=1}^r \left(w_{k_t\,,i_t} - w_{k_{t+1}\,,i_t}\right).
\]
Reversing the orientation of $C$ interchanges $C^+$ and $C^-$, and negates $\delta_C$\,.
With $w_{k,i}=\log \bm{a}^k_i$\,, we have $\delta_C(w)=0$ if and only if
$\prod_{t=1}^r \bm{a}^{k_t}_{i_t}/\bm{a}^{k_{t+1}}_{i_t} = 1$.

\begin{theorem}\label{uniquesp}
Let $F(\alpha,s)$ be facet describing. If $G(\alpha)$ contains a simple cycle $C$, then
$\delta_C(w)=0$. Consequently, if $\delta_C(w)\neq0$ for every simple cycle $C$, then, for
every facet-describing $F(\alpha,s)$, the equality graph $G(\alpha)$ is a spanning tree,
which is the unique spanning tree representing $F(\alpha,s)$, and $\mathscr{S}$ is a
triangulation.
\end{theorem}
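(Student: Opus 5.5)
The first claim follows from telescoping the log relation \eqref{logrelation} around the cycle. Suppose $G(\alpha)$ contains the simple cycle $C: k_1 - i_1 - k_2 - \cdots - k_r - i_r - k_1$. Every arc of $C$ lies in $E(\alpha)$, so with $\eta_i=\log\alpha_i$ and $u_k=\log M_k(\alpha)$ we get equality $\eta_i-u_k=w_{k,i}$ on both $C^+$ and $C^-$. We then have
\[
\delta_C(w)=\sum_{t=1}^r\bigl((\eta_{i_t}-u_{k_t})-(\eta_{i_t}-u_{k_{t+1}})\bigr)=\sum_{t=1}^r (u_{k_{t+1}}-u_{k_t})=0,
\]
because the sum telescopes around the cycle ($k_{r+1}=k_1$). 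Equivalently, in the original coordinates, $\prod_t \bm{a}^{k_t}_{i_t}/\bm{a}^{k_{t+1}}_{i_t}=\prod_t M_{k_{t+1}}(\alpha)/M_{k_t}(\alpha)=1$. Connectedness of $G(\alpha)$ plays no role in this step; only $C\subset E(\alpha)$ is used.

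For the consequences, assume $\delta_C(w)\neq0$ for every simple cycle $C$, and let $F(\alpha,s)$ be facet describing. The first claim shows $G(\alpha)$ contains no simple cycle. Any cycle in a bipartite graph contains a simple cycle of the form above with $r\ge2$, since a shortest cycle has distinct nodes. Hence $G(\alpha)$ is a forest. By Theorem \ref{thm:connectedness}, $G(\alpha)$ is connected on the full node set $N_0\sqcup[d]$, so it is a spanning tree. By Proposition \ref{sprecover}, every spanning tree of $G(\alpha)$ represents $F(\alpha,s)$, and $G(\alpha)$ itself is the only spanning tree contained in $E(\alpha)$.

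For uniqueness in the stronger sense, we must exclude a feasible tree $T\not\subset E(\alpha)$ with $F(\alpha^T,s)=F(\alpha,s)$. Here I would invoke Proposition \ref{prop:sptoF}: it gives $T\subset E(\alpha^T)=E(\alpha)$, because $\alpha^T=\alpha$ determines $E$. Thus $T$ is a spanning tree inside the tree $G(\alpha)$, and therefore $T=G(\alpha)$.

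For the triangulation claim, use Theorem \ref{facetcellbijection}. Every maximal cell $C$ of $\mathscr{S}$ equals $C(\alpha)$ for some facet-describing $F(\alpha,s)$ (fix any $s$), and $G(C)=G(\alpha)$ is a spanning tree with $n+d$ arcs. So $C$ has exactly $n+d$ vertices $v_{k,i}$. Lemma \ref{lem:dimcell} gives $\dim C=n+d-1$; this also follows from the affine independence argument of Lemma \ref{lem:forest}. Hence $C$ is a simplex. Since all maximal cells are simplices, $\mathscr{S}$ is a triangulation.

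The main point needing care is the uniqueness statement, specifically checking that any representing tree must sit inside $E(\alpha)$; Proposition \ref{prop:sptoF} handles this cleanly. The rest is bookkeeping.
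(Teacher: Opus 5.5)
Your proof is correct and follows essentially the same route as the paper. You telescope the equalities of \eqref{logrelation} around the cycle, use Theorem~\ref{thm:connectedness} to get a spanning tree, and use Proposition~\ref{prop:sptoF} to force any representing tree into $E(\alpha^T)=E(\alpha)$. For the last claim you combine Theorem~\ref{facetcellbijection} with Lemma~\ref{lem:dimcell} to see that each maximal cell has $n+d$ vertices and full dimension. Your side remarks, that only $C\subset E(\alpha)$ is needed in the first step and that a shortest cycle yields a simple cycle with $r\ge2$, make explicit details the paper leaves implicit.
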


\begin{proof}
With $u=\log M(\alpha)$ as in \eqref{logrelation}, for each $t \in [r]$ both
$(k_t\,,i_t)$ and $(k_{t+1}\,,i_t)$ belong to $E(\alpha)$, so
$\log\alpha_{i_t}=u_{k_t}+w_{k_t\,,i_t}=u_{k_{t+1}}+w_{k_{t+1}\,,i_t}$\,, that is,
$u_{k_{t+1}}-u_{k_t}=w_{k_t\,,i_t}-w_{k_{t+1}\,,i_t}$\,. Summing over $t\in[r]$, the left-hand
side telescopes to $0$, and the right-hand side is $\delta_C(w)$.

Now suppose that $\delta_C(w)\neq0$ for every simple cycle $C$. Then $G(\alpha)$ is
connected (Theorem \ref{thm:connectedness}) and has no cycle, so it is a spanning tree,
and it is its own unique spanning tree. Any feasible spanning tree $T$ with
$F(\alpha^T,s)=F(\alpha,s)$ satisfies $T\subset E(\alpha)$ by Proposition \ref{prop:sptoF}, so
$T=G(\alpha)$; that is, $G(\alpha)$ is the unique spanning tree representing $F(\alpha,s)$. By Theorem \ref{facetcellbijection},
every maximal cell $C$ of $\mathscr{S}$ has $G(C)$ a spanning tree, so it has $n+d$
vertices and dimension $n+d-1$; hence it is a simplex.
\end{proof}

Let $f(\D)$ denote the number of facets of $\D$, and let $f_{\mathrm{v}}(\D)$ denote the
number of vertical facets of $\D$.

\begin{theorem}\label{thm:facetcount}
Let $m$ be the number of maximal cells of $\mathscr{S}$. Then $f_{\mathrm{v}}(\D) = 2^d m$
and $m \leq \binom{n+d-1}{n}$, with equality if and only if $\mathscr{S}$ is a
triangulation. Therefore,
\[
f(\D) \leq n+1+2^d \binom{n+d-1}{n}.
\]
Equality holds whenever $\delta_C(\log \bm{a})\neq0$ for every simple cycle $C$; in particular,
the bound is attained for all $\bm{a}$ in a dense open subset of
$\mathbb{R}^{N_0\times[d]}_{++}$\,.
\end{theorem}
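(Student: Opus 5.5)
The count of vertical facets comes straight from the bijection. Every vertical facet, after normalization, is some $F(\alpha,s)$ with $\alpha\in\mathbb{R}^d_{++}$; this was shown in the proof of Theorem \ref{thm:completedescrip}, where a facet not of the form $z_k\ge 0$ or $\sum z_k\le 1$ has $c_i\neq 0$ for all $i$. Distinct pairs $(\alpha,s)$ give non-proportional inequalities, since the $x$-coefficient vector $(s_i\alpha_i)$ recovers both $\alpha$ and $s$. By Theorem \ref{facetcellbijection}, for each of the $2^d$ sign vectors the facets with sign $s$ correspond bijectively to the $m$ maximal cells of $\mathscr{S}$. Hence $f_{\mathrm v}(\D)=2^d m$. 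The non-vertical facets are exactly the $n+1$ facets $z_k\ge0$ and $\sum_k z_k\le 1$, so $f(\D)=n+1+2^dm$.

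For $m\le\binom{n+d-1}{n}$, refine $\mathscr{S}$ to a triangulation. One option is a regular triangulation from a generic perturbation of $w$; another is to triangulate each cell without new vertices. Every maximal cell contains at least one maximal simplex of the refinement, and these simplices are disjoint for different cells. So $m$ is at most the number of maximal simplices of a triangulation of $\Delta_n\times\Delta_{d-1}$.

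Next I would invoke the standard fact that $\Delta_n\times\Delta_{d-1}$ is totally unimodular: every full-dimensional simplex with vertices in $V(Q)$ has the same normalized volume, namely $1$. This can be seen from Lemma \ref{lem:dimcell}, because such simplices are spanning trees and the incidence matrix of a spanning tree has determinant $\pm1$ after removing one row. The normalized volume of $Q$ is $\binom{n+d-1}{n}$, the classical staircase count. It follows that every triangulation has exactly $\binom{n+d-1}{n}$ maximal simplices, which gives the bound on $m$.

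The equality characterization follows from the same argument. If $\mathscr{S}$ is not a triangulation, some maximal cell $C$ is not a simplex. Its graph $G(C)$ is then connected with more than $n+d$ arcs, so it contains a cycle and has more than one spanning tree. Any triangulation of $C$ therefore uses at least two unimodular simplices, and the inequality is strict. If $\mathscr{S}$ is a triangulation, the count is exact.

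For the final claim, Theorem \ref{uniquesp} shows that $\delta_C(\log\bm a)\neq0$ for all simple cycles $C$ makes $\mathscr{S}$ a triangulation, and hence gives equality in the bound. The set of such $\bm a$ is the complement, in $\mathbb{R}^{N_0\times[d]}_{++}$, of finitely many hypersurfaces $\prod_t \bm a^{k_t}_{i_t}/\bm a^{k_{t+1}}_{i_t}=1$. Each of these is a proper analytic set, nonempty-interior-free because each $\delta_C$ is a nonzero linear form in $w$. The complement is therefore dense and open.

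The main obstacle is justifying unimodularity and the volume $\binom{n+d-1}{n}$. For the volume I would cite \cite{DeloeraRambuSantos2010}, or alternatively exhibit the staircase triangulation, whose maximal simplices correspond to lattice paths from $(0,1)$ to $(n,d)$ and number $\binom{n+d-1}{n}$.
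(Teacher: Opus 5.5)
Your proposal is correct and follows essentially the paper's proof. The bijection of Theorem \ref{facetcellbijection} gives $f_{\mathrm v}(\D)=2^dm$, unimodularity of $Q$ together with $\Vol_{\mathbb{Z}}(Q)=\binom{n+d-1}{n}$ bounds $m$, and Theorem \ref{uniquesp} gives equality for generic data. The paper sums the normalized volumes $t_C$ of the cells, which is your refinement count taken cell by cell.

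Two small corrections. A non-simplex cell $C$ needs at least two simplices because it has more than $n+d$ vertices, not because $G(C)$ has several spanning trees. And if you triangulate cells independently, count by volume rather than calling the union a triangulation of $Q$.
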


\begin{proof}
Each vertical facet of $\D$ is described by a unique normalized support inequality
$F(\alpha,s)$ (see the proof of Theorem \ref{thm:completedescrip}), and different pairs
$(\alpha,s)$ give different inequalities. For each of the $2^d$ sign vectors $s$,
Theorem \ref{facetcellbijection} gives exactly $m$ such facets, so $f_{\mathrm{v}}(\D)=2^dm$.

We use normalized volume $\Vol_{\mathbb{Z}}$ with respect to the lattice
$\aff(Q)\cap\mathbb{Z}^{N_0\times[d]}$. The vertices of $Q$ are the columns of the
incidence matrix of the complete bipartite graph on $N_0\sqcup[d]$, which is totally
unimodular. Consequently, every $(n+d-1)$-simplex with vertices in $V(Q)$ has
normalized volume $1$, and $\Vol_{\mathbb{Z}}(Q)=\binom{n+d-1}{n}$; see
\cite[Section 6.2]{DeloeraRambuSantos2010}. Each maximal cell $C$ can be triangulated
using its own vertices into some number $t_C\geq1$ of simplices, so
$\Vol_{\mathbb{Z}}(C)=t_C$\,, and $t_C=1$ if and only if $C$ is a simplex. Because the
maximal cells cover $Q$ and have pairwise disjoint interiors,
\[
m\ \leq\ \sum_{C}t_C\ =\ \sum_{C}\Vol_{\mathbb{Z}}(C)\ =\ \Vol_{\mathbb{Z}}(Q)\ =\ \binom{n+d-1}{n},
\]
with equality if and only if every maximal cell is a simplex. Together with the $n+1$
non-vertical facets (see \cite[Theorems 4 and 5]{QL_DAM2025}), this gives the bound on
$f(\D)$.

Finally, if $\delta_C(\log \bm{a})\neq0$ for every simple cycle $C$, then $\mathscr{S}$ is a
triangulation by Theorem \ref{uniquesp}, so equality holds. There are finitely many
simple cycles, and each $\delta_C$ is a nonzero linear function of $\log \bm{a}$, so this
condition holds on the complement of finitely many hypersurfaces in
$\mathbb{R}^{N_0\times[d]}_{++}$\,, which is dense and open.
\end{proof}

We next study how feasible spanning trees change with the heights. All of the results above apply to arbitrary heights $w$, by setting $\bm{a}^k_i:=e^{w_{k,i}}$\,. For a spanning tree
$T$ on $N_0\sqcup[d]$ and heights $w$, let $M^T(w)$ and $\alpha^T(w)$ be as in
Section \ref{sec:spanning}, computed with $\bm{a}^k_i=e^{w_{k,i}}$\,. Root $T$ at $0$. Each node
$i\in[d]$ has a parent $k\in N_0$ in $T$, and the path from $0$ to every child $k'$ of $i$
passes through $k-i-k'$, so the definition of $M^T$ gives
$M^T_k \bm{a}^k_i=M^T_{k'}\bm{a}^{k'}_i$\,. Therefore,
\[
\tau^T_i(w):=M^T_k(w)\,e^{w_{k,i}}\,,\quad\mbox{for any } (k,i)\in T,
\]
is well defined. We define the \emph{slack}
\[
\sigma^T_{l,i}(w):=\log M^T_l(w)+w_{l,i}-\log\tau^T_i(w)
= \log\frac{M^T_l(w)\,e^{w_{l,i}}}{\tau^T_i(w)},
  \qquad (l,i)\in N_0\times[d].
\]
By construction, $\sigma^T_{l,i}(w)=0$ for $(l,i)\in T$. Because $\alpha^T_i(w)=\min_{l\in N_0}M^T_l(w)e^{w_{l,i}}$ and the minimum includes the value $\tau^T_i(w)$, we always have $\alpha^T(w)\le\tau^T(w)$. Hence the following are equivalent:
(i) $T$ is feasible at $w$; (ii) $\sigma^T_{l,i}(w)\ge 0$ for every arc
$(l,i)\notin T$; (iii) $\alpha^T(w)=\tau^T(w)$. For $e\notin T$, the graph $T\cup\{e\}$ contains a unique simple cycle, the \emph{fundamental cycle} $C(T,e)$, which we orient so that $e\in C(T,e)^+$.

\begin{lemma}\label{lem:slack}
For every spanning tree $T$ on $N_0\sqcup[d]$, every $e\notin T$, and all heights $w$,
we have $\sigma^T_e(w)=\delta_{C(T,e)}(w)$. Consequently, $T$ is feasible at $w$ if and
only if $\delta_{C(T,e)}(w)\geq0$ for every $e\notin T$.
\end{lemma}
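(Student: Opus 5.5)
The plan is to work in logarithmic coordinates and telescope along the fundamental cycle. Write $\mu_k:=\log M^T_k(w)$ for $k\in N_0$ and $\theta_i:=\log\tau^T_i(w)$ for $i\in[d]$. By the definition of $\tau^T$, every tree arc $(k,i)\in T$ satisfies
\[
\theta_i=\mu_k+w_{k,i}\,.
\]
For an arbitrary arc $(l,i)$ the slack is $\sigma^T_{l,i}(w)=\mu_l+w_{l,i}-\theta_i$, and this quantity vanishes on $T$. The identity $\sigma^T_e(w)=\delta_{C(T,e)}(w)$ should then follow by summing these relations around the cycle.

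First I write the fundamental cycle with its orientation. Let $e=(l,i)\notin T$. Since $T$ is a spanning tree, it contains a unique path from $i$ to $l$. Closing this path with $e$ gives the cycle
\[
C(T,e):\ k_1-i_1-k_2-i_2-\cdots-k_r-i_r-k_1,
\]
where $k_1=l$, $i_1=i$, and $(k_1,i_1)=e\in C^+$. The remaining arcs are $(k_{t+1},i_t)$ for $t\in[r]$, which lie in $C^-$, and $(k_t,i_t)$ for $t\ge2$, which lie in $C^+$; all of these are arcs of $T$. Here $r\ge2$, because $e\notin T$ and $T$ is connected, so the path from $i$ to $l$ has length at least $3$ through $[d]$ and $N_0$.

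Next I use the tree relations to rewrite $\delta_C(w)=\sum_{t=1}^r(w_{k_t,i_t}-w_{k_{t+1},i_t})$. For every tree arc $(k,i)$ the height can be replaced by $w_{k,i}=\theta_i-\mu_k$. Apply this to all arcs except $e$, and write the term $w_{k_1,i_1}$ as $\sigma^T_e+\theta_{i_1}-\mu_{k_1}$. Then every term has the form $\theta_{i_t}-\mu_{k_t}$ or $-(\theta_{i_t}-\mu_{k_{t+1}})$, plus the single extra term $\sigma^T_e$. The total is
\[
\delta_C(w)=\sigma^T_e(w)+\sum_t(\mu_{k_{t+1}}-\mu_{k_t}),
\]
and the sum telescopes to zero because $k_{r+1}=k_1$. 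This gives $\sigma^T_e(w)=\delta_{C(T,e)}(w)$.

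The consequence then comes from the equivalence (i)$\Leftrightarrow$(ii) already established before the lemma: $T$ is feasible at $w$ exactly when $\sigma^T_e(w)\ge0$ for all $e\notin T$. By the identity just proved, this is the same as $\delta_{C(T,e)}(w)\ge0$ for all $e\notin T$.

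The only delicate point is the bookkeeping of orientation and signs, namely checking that $e$ lands in $C^+$ with a plus sign and that the cycle really has the alternating form required by the definition of a simple cycle. I would verify this directly on the indexing above rather than with any further machinery.
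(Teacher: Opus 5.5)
Your proof is correct and follows essentially the paper's route. Both write the fundamental cycle with $e=(k_1,i_1)\in C^+$ and use the tree relations on every other cycle arc, so that the $\log M^T$ terms telescope to give $\sigma^T_e(w)=\delta_{C(T,e)}(w)$; you name $\theta=\log\tau^T$ explicitly where the paper substitutes directly. The feasibility criterion then follows from the equivalence (i)$\Leftrightarrow$(ii), as you say.
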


\begin{proof}
Write $C(T,e): k_1 - i_1 - k_2 - \cdots - k_r - i_r - k_1$ with $e=(k_1\,,i_1)\in C(T,e)^+$, and
let $u:=\log M^T(w)$. Every arc of $C(T,e)$ other than $e$ lies in $T$, so, by the
definition of $\tau^T$,
\begin{align*}
\sigma^T_e(w)&=(u_{k_1}+w_{k_1\,,i_1})-(u_{k_2}+w_{k_2\,,i_1}),\\
0&=(u_{k_t}+w_{k_t\,,i_t})-(u_{k_{t+1}}+w_{k_{t+1}\,,i_t}),\quad t=2,\ldots,r.
\end{align*}
Summing these $r$ equations, the terms in $u$ telescope, and we obtain
$\sigma^T_e(w)=\delta_{C(T,e)}(w)$.
\end{proof}

\begin{theorem}\label{thm:flip}
Let $w^0$ be heights, and let $C^0$ be a maximal cell of $\mathscr{S}(w^0)$, with
associated facet $F(\alpha^0,s)$ (Theorem \ref{facetcellbijection}), such that $G(C^0)$
contains exactly one cycle $C: k_1 - i_1 - k_2 - \cdots - k_r - i_r - k_1$\,. Then
$|E(C^0)| = n+d+1$, and the spanning trees of $G(C^0)$ are the $2r$ trees
$T_e:=E(C^0)\setminus\{e\}$, for $e\in C^+\cup C^-$. Moreover, the following hold.
\begin{enumerate}
    \item At $w=w^0$, every $T_e$ is feasible and $F(\alpha^{T_e}(w^0),s)=F(\alpha^0,s)$.
    \item There is a neighborhood $U$ of $w^0$ such that, for every $w \in U$,
    \[
    T_e \text{ is feasible at } w \iff
    \begin{cases}
    \delta_C(w) \geq 0\,, & \mbox{if } e \in C^+; \\
    \delta_C(w) \leq 0\,, & \mbox{if } e \in C^-.
    \end{cases}
    \]
    \item For every $w\in U$ with $\delta_C(w)>0$, the $r$ simplices
    $C_{T_e}=\conv\{v_{k,i}:(k,i)\in T_e\}$, for $e\in C^+$, are maximal cells of
    $\mathscr{S}(w)$, and they cover $C^0$. The same holds with $C^-$ in place of $C^+$
    when $\delta_C(w)<0$.
    \item For every $w\in U$ with $\delta_C(w)>0$ and each $s\in\{\pm1\}^d$, the $r$ facets
    $F(\alpha^{T_e}(w),s)$, for $e\in C^+$, are pairwise distinct, and their coefficients
    converge to those of $F(\alpha^0,s)$ as $w\to w^0$. The same holds with $C^-$ in place
    of $C^+$ when $\delta_C(w)<0$.
\end{enumerate}
\end{theorem}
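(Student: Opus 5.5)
The plan is to reduce everything to Lemma \ref{lem:slack} and Theorem \ref{facetcellbijection}.

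\emph{Preliminaries.} By Lemma \ref{lem:dimcell}, $G(C^0)$ is connected on all $n+d+1$ nodes. A connected graph with exactly one cycle has as many arcs as nodes, so $|E(C^0)|=n+d+1$. Its spanning trees are obtained by deleting exactly one arc of the unique cycle. That cycle has $2r$ arcs, namely those of $C^+\cup C^-$, which gives the $2r$ trees $T_e$.

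\emph{Item 1.} This is Proposition \ref{sprecover} applied to $F(\alpha^0,s)$, using $G(\alpha^0)=G(C^0)$ from Theorem \ref{facetcellbijection}.

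\emph{Item 2.} Fix $e\in C^+\cup C^-$ and use Lemma \ref{lem:slack}: $T_e$ is feasible at $w$ if and only if $\delta_{C(T_e,f)}(w)\ge0$ for every $f\notin T_e$. There are two kinds of $f$.
\begin{enumerate}
\item \emph{The arc $f=e$.} The fundamental cycle $C(T_e,e)$ is $C$, oriented so that $e$ lies in the plus set. So the condition is $\delta_C(w)\ge0$ if $e\in C^+$, and $-\delta_C(w)\ge0$ if $e\in C^-$.
\item \emph{Arcs $f\notin E(C^0)$.} At $w^0$, item 1 gives feasibility, so the slack $\sigma^{T_e}_f(w^0)=\delta_{C(T_e,f)}(w^0)$ is $\ge0$. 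I claim it is strictly positive. If it were zero, then $f$ would be tight for $\alpha^{T_e}(w^0)=\alpha^0$, which would put $f$ in $E(\alpha^0)=E(C^0)$, a contradiction. Each $\delta$ is linear in $w$, so these finitely many strict inequalities persist on a neighborhood $U$; intersecting over all $e$ gives a single $U$.
\end{enumerate}
Combining the two kinds of arcs gives the stated equivalence on $U$.

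\emph{Item 3.} Take $w\in U$ with $\delta_C(w)>0$ and $e\in C^+$. Then $T_e$ is feasible at $w$. The trees $T_{e'}$ with $e'\in C^-$ are infeasible, and so is every other spanning tree $T$ not contained in $E(C^0)$. The reason is that such a $T$ is infeasible at $w^0$ (otherwise $T\subset E(\alpha^T(w^0))$ would be a facet graph, hence a maximal cell of $\mathscr S(w^0)$), and infeasibility is an open condition.

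By Proposition \ref{prop:sptoF} and Theorem \ref{uniquesp}, the graph $E(\alpha^{T_e}(w))$ equals $T_e$. The only other candidate arc is $e$, whose slack $\delta_C(w)>0$ is nonzero; all other slacks are positive by item 2. Hence $C_{T_e}$ is a maximal cell of $\mathscr S(w)$ by Theorem \ref{facetcellbijection}. These $r$ trees are distinct, so they give $r$ distinct cells.

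For the covering, the main obstacle, I would argue as follows.
\begin{enumerate}
\item Cells of $\mathscr S(w)$ lying inside $C^0$ are exactly those with vertex sets in $E(C^0)$. Cells of $\mathscr S(w^0)$ other than $C^0$ remain cells for $w\in U$, since their supporting functions have strict slack off their vertex sets.
\item Alternatively, use volume: $C^0$ has $n+d+1$ vertices in dimension $n+d-1$ and a single circuit, so it has exactly two triangulations, $\{C_{T_e}:e\in C^+\}$ and $\{C_{T_e}:e\in C^-\}$. By Theorem \ref{thm:facetcount}, each simplex has normalized volume $1$, so $\Vol_{\mathbb Z}(C^0)=r$.
\end{enumerate}
The $r$ simplices $C_{T_e}$, $e\in C^+$, are maximal cells of one subdivision, so they have disjoint interiors, each has volume $1$, and they lie in $C^0$. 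Their total volume is therefore $r=\Vol_{\mathbb Z}(C^0)$, so they cover $C^0$. The case $\delta_C(w)<0$ is symmetric.

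\emph{Item 4.} The facets are pairwise distinct because they correspond to distinct maximal cells under the bijection of Theorem \ref{facetcellbijection}. For convergence, $M^{T_e}(w)$ and $\alpha^{T_e}(w)$ are built from products of the $e^{w_{k,i}}$ and minima of these, hence are continuous in $w$. Their values at $w^0$ are the coefficients of $F(\alpha^0,s)$ by item 1.
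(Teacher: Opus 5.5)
Your preliminaries, items 1, 2 and 4, and the maximality part of item 3 follow the paper's proof. In item 2 you get persistence of the positive slacks from linearity of $\delta_{C(T_e,f)}$, while the paper uses continuity of $M^{T_e}$ and $\tau^{T_e}$; by Lemma~\ref{lem:slack} these are the same.

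The real difference is the covering claim in item 3. The paper proves it directly. Write $x\in C^0$ as $\sum_{(k,i)\in E(C^0)}\lambda_{k,i}v_{k,i}$ with $\lambda\ge0$ and $\sum\lambda_{k,i}=1$. Let $\theta:=\min_{(k,i)\in C^+}\lambda_{k,i}$, attained at some $e$. Subtract $\theta$ times the circuit relation $\sum_{(k,i)\in C^+}v_{k,i}-\sum_{(k,i)\in C^-}v_{k,i}=0$. This gives a convex combination with $\lambda_e=0$, so $x\in C_{T_e}$.

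You instead compare normalized volumes. But your value $\Vol_{\mathbb{Z}}(C^0)=r$ comes from the fact that a corank-one configuration has exactly the two circuit triangulations. That fact already says that $\{C_{T_e}:e\in C^+\}$ triangulates, hence covers, $C^0$. So once the lemma is granted, the volume comparison adds nothing. Without the lemma, your computation of $\Vol_{\mathbb{Z}}(C^0)$ presupposes the statement you are proving. Your route is valid as a citation of a standard result. The paper's two-line argument is a self-contained proof of exactly the half of that result that is needed.

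You should also drop three side remarks. They are false, although your final argument does not use them.

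First, a spanning tree not contained in $E(C^0)$ need not be infeasible at $w^0$. A spanning tree of the graph of any other maximal cell of $\mathscr{S}(w^0)$ is feasible there by Proposition~\ref{sprecover}. Your parenthetical only produces another maximal cell, not a contradiction.

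Second, cells of $\mathscr{S}(w^0)$ other than $C^0$ need not persist on $U$. Suppose another maximal cell has a cycle $C'$ in its graph. By Theorem~\ref{uniquesp} applied at $w$, it is not a cell of $\mathscr{S}(w)$ whenever $\delta_{C'}(w)\neq0$, and such $w$ occur arbitrarily close to $w^0$.

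Third, Theorem~\ref{uniquesp} cannot be used to obtain $E(\alpha^{T_e}(w))=T_e$, because its hypothesis that all cycle sums are nonzero may fail on $U$. The slack argument you give immediately afterwards is the correct justification.
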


\begin{proof}
By Theorem \ref{facetcellbijection}, $E(C^0)=E(\alpha^0)$, and $G(C^0)$ is connected
with node set $N_0\sqcup[d]$. A connected graph on $n+d+1$ nodes with exactly one cycle
has $n+d+1$ arcs, and its spanning trees are obtained by deleting exactly one arc of
the cycle. This gives the first two claims. Note also that $\delta_C(w^0)=0$, by
Theorem \ref{uniquesp}.

\emph{(1)}. Each $T_e$ is a spanning tree of $G(\alpha^0)$, so, by
Proposition \ref{sprecover}, it is feasible at $w^0$, with $\alpha^{T_e}(w^0)=\alpha^0$
and $M^{T_e}(w^0)=M(\alpha^0)$.

\emph{(2)}. Fix $e\in C^+\cup C^-$. The arcs not in $T_e$ are $e$ and the arcs
$f\notin E(\alpha^0)$. For such $f=(l,i)$, part (1) gives
$\sigma^{T_e}_f(w^0)=\log\bigl(M_l(\alpha^0)\bm{a}^l_i/\alpha^0_i\bigr)>0$, because
$f\notin E(\alpha^0)$. Because $M^{T_e}(w)$ and $\tau^{T_e}(w)$ are continuous (indeed,
products of exponentials of entries of $w$), there is a neighborhood $U_e$ of $w^0$ on
which $\sigma^{T_e}_f(w)>0$ for all $f\notin E(\alpha^0)$. Let $U:=\bigcap_e U_e$\,, a
finite intersection. For $w\in U$, Lemma \ref{lem:slack} shows that $T_e$ is feasible
at $w$ if and only if $\sigma^{T_e}_e(w)\geq0$. The fundamental cycle of $e$ in $T_e$
is $C$ itself, oriented so that $e$ is in its positive part; this is the given
orientation of $C$ if $e\in C^+$, and the reverse orientation if $e\in C^-$. By
Lemma \ref{lem:slack}, $\sigma^{T_e}_e(w)$ equals $\delta_C(w)$ if $e\in C^+$ and
$-\delta_C(w)$ if $e\in C^-$.

\emph{(3)}. Let $w\in U$ with $\delta_C(w)>0$, and let $e\in C^+$. By (2), $T_e$ is
feasible at $w$, and every arc not in $T_e$ has positive slack, so
$E(\alpha^{T_e}(w))=T_e$\,. By Proposition \ref{prop:sptoF} and
Theorem \ref{facetcellbijection}, $C_{T_e}$ is a maximal cell of $\mathscr{S}(w)$, and it
is a simplex, because $|T_e|=n+d$. To see that these cells cover $C^0$, note that
\[
\sum_{(k,i)\in C^+}v_{k,i}-\sum_{(k,i)\in C^-}v_{k,i}
=\sum_{t=1}^r\left((p^{k_t},q^{i_t})-(p^{k_{t+1}},q^{i_t})\right)=0,
\]
and the coefficients of this relation sum to zero. Let $x\in C^0$, and write
$x=\sum_{(k,i)\in E(C^0)}\lambda_{k,i}v_{k,i}$ with $\lambda\geq0$ and
$\sum\lambda_{k,i}=1$. Let $\theta:=\min\{\lambda_{k,i}:(k,i)\in C^+\}$, attained at
some $e\in C^+$. Subtracting $\theta$ times the relation above, that is, replacing
$\lambda_{k,i}$ by $\lambda_{k,i}-\theta$ on $C^+$ and by $\lambda_{k,i}+\theta$ on
$C^-$, gives a convex combination of the same point with $\lambda_e=0$, so
$x\in C_{T_e}$\,. The case $\delta_C(w)<0$ is symmetric, adding $\theta$ times the
relation with $\theta$ minimized over $C^-$.

\emph{(4)}. For distinct $e,e'\in C^+$, the maximal cells $C_{T_e}$ and $C_{T_{e'}}$ are
distinct, so, by Theorem \ref{facetcellbijection}, the facets
$F(\alpha^{T_e}(w),s)$ and $F(\alpha^{T_{e'}}(w),s)$ are distinct. The coefficients
$\alpha^{T_e}(w)$ and $M^{T_e}(w)$ depend continuously on $w$, and by (1), they equal
$\alpha^0$ and $M(\alpha^0)$ at $w=w^0$. The case $\delta_C(w)<0$ is symmetric.
\end{proof}

\begin{corollary}\label{cor:chambers}
Let $\mathscr{C}$ be the set of simple cycles, and for $C\in\mathscr{C}$, let
$\mathcal{H}_C := \{w \in \mathbb{R}^{N_0 \times [d]} : \delta_C(w) = 0\}$, a hyperplane.
Let $\mathcal{H} := \bigcup_{C \in \mathscr{C}} \mathcal{H}_C$\,, and for heights $w$,
let $\mathcal{T}(w)$ be the set of spanning trees that are feasible at $w$. If $w$ and $w'$
lie in the same connected component $\mathcal{R}$ of
$\mathbb{R}^{N_0 \times [d]} \setminus \mathcal{H}$, then
$\mathcal{T}(w) = \mathcal{T}(w')$ and $\mathscr{S}(w) = \mathscr{S}(w')$, and this
subdivision is a triangulation.
\end{corollary}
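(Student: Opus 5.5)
The plan is to work entirely through Lemma \ref{lem:slack}, which says that feasibility of a spanning tree $T$ at $w$ is governed by the signs of finitely many cycle sums. Then transfer the conclusion to the subdivision through the tree--cell correspondence of Corollary \ref{cor:treefacets} and Theorem \ref{facetcellbijection}.

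First fix a spanning tree $T$ and consider the finitely many linear functions $w\mapsto\delta_{C(T,e)}(w)$ for $e\notin T$. Each fundamental cycle $C(T,e)$ is a simple cycle, so $\mathcal{H}_{C(T,e)}\subset\mathcal{H}$. On the connected set $\mathcal{R}$, none of these linear functions vanishes. By continuity and connectedness, each therefore has constant sign on $\mathcal{R}$. Lemma \ref{lem:slack} then says $T$ is feasible at $w$ if and only if all these signs are nonnegative, equivalently positive on $\mathcal{R}$. That condition is the same at $w$ and $w'$, so $T\in\mathcal{T}(w)\iff T\in\mathcal{T}(w')$, and hence $\mathcal{T}(w)=\mathcal{T}(w')$.

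Next, deduce $\mathscr{S}(w)=\mathscr{S}(w')$. Since $w\notin\mathcal{H}$, every $\delta_C(w)\neq0$, so Theorem \ref{uniquesp} applies (with $\bm{a}^k_i:=e^{w_{k,i}}$). It gives that every facet-describing $F(\alpha,s)$ has $G(\alpha)$ equal to a spanning tree, and that $\mathscr{S}(w)$ is a triangulation. Combining Corollary \ref{cor:treefacets} with Theorem \ref{facetcellbijection}, the maximal cells of $\mathscr{S}(w)$ are exactly the sets $\conv\{v_{k,i}:(k,i)\in E(\alpha^T(w))\}$ for $T\in\mathcal{T}(w)$. Moreover $E(\alpha^T(w))=T$: the equality graph is a spanning tree containing $T$ (Proposition \ref{prop:sptoF}), so it equals $T$. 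Thus the maximal cells of $\mathscr{S}(w)$ are precisely the simplices $C_T$, $T\in\mathcal{T}(w)$. The same holds at $w'$, and $\mathcal{T}(w)=\mathcal{T}(w')$, so the two subdivisions have the same maximal cells. A subdivision is determined by its maximal cells together with their faces, so $\mathscr{S}(w)=\mathscr{S}(w')$, and it is a triangulation.

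The main point needing care is the identification of maximal cells with feasible trees, specifically that $E(\alpha^T(w))=T$ exactly rather than merely containing $T$. Off $\mathcal{H}$, an alternative route is to note that the slacks $\sigma^T_e(w)=\delta_{C(T,e)}(w)$ are strictly positive for $e\notin T$, which directly forces $E(\alpha^T(w))=T$ via condition (iii) of the slack characterization. Either argument suffices; I would use the slack one, since it avoids appealing to uniqueness in Theorem \ref{uniquesp} for the cell description.
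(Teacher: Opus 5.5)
Your proposal is correct and follows essentially the same route as the paper. Constant signs of the cycle sums on $\mathcal{R}$ together with Lemma~\ref{lem:slack} give $\mathcal{T}(w)=\mathcal{T}(w')$; then Theorem~\ref{uniquesp}, Corollary~\ref{cor:treefacets} and Theorem~\ref{facetcellbijection} identify the maximal cells with the simplices $C_T$, $T\in\mathcal{T}(w)$, and your explicit check that $E(\alpha^T(w))=T$ (by either argument) only spells out a step the paper leaves implicit.
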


\begin{proof}
For each $C\in\mathscr{C}$, the continuous function $\delta_C$ has no zero on the
connected set $\mathcal{R}$, so it has constant sign there. By Lemma \ref{lem:slack}, a
spanning tree $T$ is feasible at $w\in\mathcal{R}$ if and only if
$\delta_{C(T,e)}(w)>0$ for every $e\notin T$, which depends only on these signs. Hence
$\mathcal{T}(w)=\mathcal{T}(w')$. By Theorem \ref{uniquesp}, $\mathscr{S}(w)$ is a
triangulation, and the graph of each of its maximal cells is a spanning tree. By
Theorem \ref{facetcellbijection} and Corollary \ref{cor:treefacets}, the maximal cells
of $\mathscr{S}(w)$ are therefore exactly the simplices
$\conv\{v_{k,i}:(k,i)\in T\}$, for $T\in\mathcal{T}(w)$. The same holds at $w'$, so
$\mathscr{S}(w)=\mathscr{S}(w')$.
\end{proof}

\begin{remark}
     Although the feasible spanning trees and the regular triangulation are constant on $\mathcal{R}$, the numerical coefficients $\alpha^T,M^T$ are not: for $T \in \mathcal{T}(w)$, in general $(\alpha^T(w),M^T(w)) \neq (\alpha^T(w'),M^T(w'))$.
\end{remark}

\begin{example}\label{ex:square-perturbation}
We return to Example~\ref{ex:spcalc} and perturb one parameter.
Let $n=1$, $d=2$, and, for $t\in\mathbb{R}$, set
\[
  (\bm{a}_1^0\,,\bm{a}_2^0)=(e^t,3),\qquad
  (\bm{a}_1^1\,,\bm{a}_2^1)=(3,9).
\]
At $t=0$, the normalized coefficients are
$\alpha=(1,3)$ and $M_1=1/3$\,. All four arcs are tight, so the
equality graph is $K_{2,2}$ and its four spanning trees represent
the same facet. For $s=(1,1)$, this facet is
\[
  x_1+3x_2+\frac23z_1\le1.
\]
Choose the cycle $C: 0-1-1-2-0$ (alternating between $N_0$ and $[2]$), so that
\[
  C^+=\{(0,1),(1,2)\},\qquad
  C^-=\{(1,1),(0,2)\}.
\]
Its cycle sum is
\[
  \delta_C(w)=w_{0,1}-w_{1,1}+w_{1,2}-w_{0,2}
             =t-\log3+\log9-\log3=t.
\]

We draw $Q$ in lattice coordinates, with
$v_{0,1}=(0,0)$\,, $v_{1,1}=(1,0)$\,,
$v_{0,2}=(0,1)$\,, and $v_{1,2}=(1,1)$\,.
At $t=0$, the four lifted vertices lie on one affine plane, so
the only maximal cell is the whole square.
For $t<0$, the subdivision uses the diagonal
$\conv\{v_{0,1}\,,v_{1,2}\}$; for $t>0$, it uses
$\conv\{v_{1,1}\,,v_{0,2}\}$\,. Figure~\ref{fig:square-perturbation}
shows these three subdivisions.

\begin{figure}[htbp]
\centering
\begin{tikzpicture}[x=1.55cm,y=1.55cm,
  every node/.style={font=\scriptsize}]
  \begin{scope}
    \fill[black!5] (0,0) rectangle (1,1);
    \draw (0,0) rectangle (1,1);
    \draw (0,0)--(1,1);
    \foreach \x/\y in {0/0,1/0,0/1,1/1}
      \fill (\x,\y) circle (1.3pt);
    \node[below left] at (0,0) {$v_{0,1}$};
    \node[below right] at (1,0) {$v_{1,1}$};
    \node[above left] at (0,1) {$v_{0,2}$};
    \node[above right] at (1,1) {$v_{1,2}$};
    \node[below] at (.5,-.38) {$t<0$};
  \end{scope}
  \begin{scope}[shift={(2.3,0)}]
    \fill[black!5] (0,0) rectangle (1,1);
    \draw (0,0) rectangle (1,1);
    \foreach \x/\y in {0/0,1/0,0/1,1/1}
      \fill (\x,\y) circle (1.3pt);
    \node[below left] at (0,0) {$v_{0,1}$};
    \node[below right] at (1,0) {$v_{1,1}$};
    \node[above left] at (0,1) {$v_{0,2}$};
    \node[above right] at (1,1) {$v_{1,2}$};
    \node[below] at (.5,-.38) {$t=0$};
  \end{scope}
  \begin{scope}[shift={(4.6,0)}]
    \fill[black!5] (0,0) rectangle (1,1);
    \draw (0,0) rectangle (1,1);
    \draw (1,0)--(0,1);
    \foreach \x/\y in {0/0,1/0,0/1,1/1}
      \fill (\x,\y) circle (1.3pt);
    \node[below left] at (0,0) {$v_{0,1}$};
    \node[below right] at (1,0) {$v_{1,1}$};
    \node[above left] at (0,1) {$v_{0,2}$};
    \node[above right] at (1,1) {$v_{1,2}$};
    \node[below] at (.5,-.38) {$t>0$};
  \end{scope}
\end{tikzpicture}
\caption{The subdivisions of the square in
Example~\ref{ex:square-perturbation}.}
\label{fig:square-perturbation}
\end{figure}

Write $E_0=\{0,1\}\times\{1,2\}$ and
$T_{k,i}:=E_0\setminus\{(k,i)\}$\,. The feasible trees and their
normalized coefficients are as follows; in every row, $M_0=1$\,.
\begin{center}
\begin{tabular}{@{}cccc@{}}
\toprule
Parameter & Tree & $(\alpha_1\,,\alpha_2)$ & $M_1$\\
\midrule
$t>0$ & $T_{0,1}$ & $(1,3)$ & $1/3$\\
$t>0$ & $T_{1,2}$ & $(e^t,3)$ & $e^t/3$\\
$t<0$ & $T_{1,1}$ & $(e^t,3)$ & $1/3$\\
$t<0$ & $T_{0,2}$ & $(e^t,3e^t)$ & $e^t/3$\\
\bottomrule
\end{tabular}
\end{center}
For example, the equalities on $T_{1,2}$ give
$\alpha_1=e^t=3M_1$ and $\alpha_2=3$\,.
The omitted arc $(1,2)$ requires that $3 = \alpha_2 \leq M_1 \bm{a}^1_2 = 9M_1=3e^t$\,, which holds
exactly when $t\ge0$. The other rows follow in the same way.
For $s=(1,1)$, each row gives the facet inequality
\[
  \alpha_1x_1+\alpha_2x_2+(1-M_1)z_1\le1.
\]
In agreement with Theorem \ref{thm:flip}, for $t>0$ the feasible trees are $T_e$ with $e\in C^+$, and for $t<0$ they are $T_e$ with $e\in C^-$. On either side of zero, the two rows give distinct facets. As
$t\to0$, all four coefficient vectors converge to
$(\alpha_1\,,\alpha_2\,,M_1)=(1,3,1/3)$\,.
\end{example}

\section{Separation}\label{sec:sep}

Membership in $\D$ by itself is easy to decide: writing each $\Po_k$ as
$\{x: \exists\, y\in\mathbb{R}^d,\ -y\le x\le y,\ \sum_{i\in[d]}\bm{a}^k_iy_i\le1\}$, the
disjunctive-programming formulation of \cite{BalasDP} expresses $\D$ as the projection of
a polytope with $O(nd)$ variables and constraints, so membership reduces to a single
linear program of polynomial size. What that approach does not provide is a facet of
$\D$. We now give a polynomial-time algorithm that either certifies membership or
returns a violated \emph{facet-describing} inequality of $\D$, which, when vertical, is
a most violated one among the normalized support inequalities. Such inequalities can
be used as cutting planes in larger mixed-integer models in which $\D$ appears as a
substructure. Note that the algorithm is polynomial in both $n$ and $d$, whereas $\D$
can have as many as $n+1+2^d\binom{n+d-1}{n}$ facets (Theorem \ref{thm:facetcount}).

Let $(\bar x,\bar z)\in\mathbb{R}^{d+n}$, set $c_i:=|\bar x_i|$ for $i\in[d]$, and define
the polyhedron
\begin{equation}\label{eq:separation-polyhedron}
\mathcal L:=\left\{(\alpha,M)\in\mathbb{R}^d\times\mathbb{R}^{N_0}:
\begin{array}{ll}
\alpha_i\le M_k \bm{a}_i^k & (k\in N_0\,,\ i\in[d]),\\
M_0=1,\quad \alpha_i\ge0 & (i\in[d]),\\
M_k\ge0 & (k\in N)
\end{array}\right\}
\end{equation}
and the linear function
\[
\phi(\alpha,M):=
\sum_{i\in[d]}c_i\alpha_i+
\sum_{k\in N}(1-M_k)\bar z_k-1 .
\]
For every normalized support inequality $F(\alpha,s)$, the point $(\alpha,M(\alpha))$
lies in $\mathcal L$, and the violation of $F(\alpha,s)$ at $(\bar x,\bar z)$ is at most
$\phi(\alpha,M(\alpha))$, with equality when $s_i\bar x_i=|\bar x_i|$ for all $i$.

\medskip
\noindent {\bf Algorithm.}
\begin{enumerate}[label=\arabic*.]
\item If $(\bar x,\bar z)$ violates one of the non-vertical inequalities $z_k\geq0$, for
$k\in N$, or $\sum_{k\in N}z_k\leq1$, return it.
\item \emph{Maximize the violation.} Compute
$\theta:=\max\{\phi(\alpha,M):(\alpha,M)\in\mathcal L\}$ and a basic optimal solution
$(\alpha^1,M^1)$. If $\theta\le0$, return ``$(\bar x,\bar z)\in\D$''.
\item \emph{Saturate.} Set $\alpha_i:=\min_{k\in N_0}M^1_k \bm{a}^k_i$ for $i\in[d]$, and then
$M_0:=1$ and $M_k:=\max_{i\in[d]}\alpha_i/\bm{a}^k_i$ for $k\in N$.
\item \emph{Connect.} While $G(\alpha)$ is not connected: let $C_0$ be the node set of
the connected component of $G(\alpha)$ containing the node $0$, let
\[
\lambda:=\min\left\{\frac{M_k \bm{a}^k_i}{\alpha_i}~:~k\in C_0\cap N_0\,,\ i\in[d]\setminus C_0\right\},
\]
and multiply $\alpha_i$ by $\lambda$ for every $i\in[d]\setminus C_0$ and $M_k$ by $\lambda$ for
every $k\in N_0\setminus C_0$\,.
\item Let $s_i:=1$ if $\bar x_i\ge0$ and $s_i:=-1$ otherwise, for $i\in[d]$, and return
$F(\alpha,s)$.
\end{enumerate}

Informally, Step 2 finds a most violated normalized support inequality, Step 3 makes
each coefficient as large (for $\alpha$) or as small (for $M$) as the others permit, and
Step 4 moves along directions in which the violation is constant until the equality
graph becomes connected, so that the output is facet describing.

\begin{theorem}\label{thm:separation}
Suppose that the data $\bm{a}_i^k$ and the point $(\bar x,\bar z)$ are rational. In time
polynomial in the binary encoding length of the input, the algorithm either certifies
that $(\bar x,\bar z)\in\D$ or returns a facet-describing inequality of $\D$ violated by
$(\bar x,\bar z)$. In the latter case, if the returned inequality is vertical, then it
is a normalized support inequality $F(\alpha,s)$, and its violation is maximum
among all normalized support inequalities.
\end{theorem}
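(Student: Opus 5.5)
The plan is to verify the algorithm step by step. First, it returns a correct answer when it stops at Step~1 or Step~2. Second, Steps~3 and~4 preserve optimality of the linear program while making $G(\alpha)$ connected. Third, everything runs in polynomial time. Step~1 is immediate: the non-vertical inequalities describe facets of $\D$ by \cite[Theorems 4 and 5]{QL_DAM2025}.

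After Step~1 we have $\bar z\ge0$, and the LP is well posed. The constraints $\alpha_i\le M_0\bm{a}^0_i=\bm{a}^0_i$ bound $\alpha$, and the coefficient $-\bar z_k$ of each $M_k$ is nonpositive, so $\phi$ is bounded above on $\mathcal L$. Hence $\theta$ and a basic optimum of polynomial encoding size exist and can be computed in polynomial time. If $\theta\le0$, the observation preceding the algorithm gives $\phi(\alpha,M(\alpha))\le\theta\le0$ for every normalized support inequality. Thus $(\bar x,\bar z)$ satisfies every $F(\alpha,s)$ and every non-vertical inequality, and Theorem~\ref{thm:completedescrip} yields $(\bar x,\bar z)\in\D$.

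Now assume $\theta>0$. For Step~3, I would first show $M^1_k>0$ for all $k$. If some $M^1_k=0$, then every $\alpha_i=0$, and so $\phi\le\sum_{k\in N}\bar z_k-1\le0$, a contradiction. Hence the saturated $\alpha$ lies in $\mathbb{R}^d_{++}$. Raising $\alpha$ to $\min_k M^1_k\bm{a}^k_i$ does not decrease $\phi$, because $c\ge0$. Lowering each $M_k$ with $k\in N$ to $\max_i\alpha_i/\bm{a}^k_i$ does not decrease $\phi$ either, because $-\bar z_k\le0$. Both operations keep the point in $\mathcal L$, so the point is still optimal with value $\theta$. Afterwards $M_k=M_k(\alpha)$ for $k\in N$, and $M_0(\alpha)\le1=M_0$; the node $0$ may be isolated from $[d]$ at this stage, and Step~4 will repair that.

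Step~4 is the main obstacle. Here I would establish an invariant, preserved by each iteration: $(\alpha,M)\in\mathcal L$, $\phi(\alpha,M)=\theta$, $M_0=1$, and $M_k=M_k(\alpha)$ for every $k\in N_0$ that is incident to an arc of $G(\alpha)$ (read with $M$ in place of $M(\alpha)$ in the arc definition).

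Consider the one-parameter family in which everything outside $C_0$ is scaled by a factor $\mu$. Arcs inside $C_0$, and arcs inside the complement of $C_0$, are unaffected. The binding constraints across the cut are of two kinds. Pairs with $k\in C_0$ and $i\notin C_0$ give the upper limit $\mu\le\lambda$. Pairs with $k\notin C_0$ and $i\in C_0$ give a lower limit $\mu_{\min}$. There are no tight arcs across the cut, since $C_0$ is a component, so $\mu_{\min}<1<\lambda$, and $\lambda$ is finite because $[d]\setminus C_0\neq\emptyset$ when the graph is disconnected. Note that $0\in C_0$ is never scaled, so $M_0=1$ is preserved. Also $\lambda>1$ even when $C_0=\{0\}$.

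Since $\phi$ is linear in $\mu$ and the optimum $\theta$ is attained at the interior parameter $\mu=1$ of a feasible interval, $\phi$ is constant on $[\mu_{\min},\lambda]$. At $\mu=\lambda$ a new arc becomes tight and joins $C_0$ to another component, so there are at most $n+d$ iterations.

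At termination, $G(\alpha)$ is connected and every $k$ is incident to an arc, so $M=M(\alpha)$ and $M_0(\alpha)=1$. Hence $F(\alpha,s)$ is a normalized support inequality. It is facet describing by Theorem~\ref{thm:connectedness}. With the chosen $s$, its violation equals $\phi(\alpha,M(\alpha))=\theta>0$, which is an upper bound on the violation of every normalized support inequality, so it is maximum.

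For polynomiality, I would bound the encoding length of the output via Proposition~\ref{sprecover}. The final $(\alpha,M)$ equals $(\alpha^T,M^T)$ for a spanning tree $T$ of $G(\alpha)$, so each entry is a product of at most $n+d$ data ratios. The same argument applies to intermediate iterates, componentwise: on $C_0$ the values are such products. On the other components they are LP values times such products, and since there are at most $n+d$ iterations, the number of such factors stays polynomially bounded. This keeps every intermediate encoding polynomial, and each iteration costs polynomially many arithmetic operations.
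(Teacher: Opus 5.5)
Your correctness argument is sound and follows the paper's proof closely. The LP is bounded once $\bar z\ge0$. The case $\theta\le0$ gives membership via Theorem~\ref{thm:completedescrip}. Step~3 keeps an optimal point. In Step~4 the violation is affine in $\mu$ with an interior maximizer at $\mu=1$, so it stays equal to $\theta$ while $C_0$ grows.

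\textbf{The gap: encoding length of the intermediate iterates in Step~4.} The theorem claims time polynomial in the binary encoding length, so these sizes must be controlled. You assert that coordinates outside $C_0$ are ``LP values times such products.'' You then argue that there are at most $n+d$ iterations, so the number of factors stays bounded. Counting iterations does not control bit sizes here. Each factor $\lambda=M_k\bm{a}^k_i/\alpha_i$ divides by the \emph{current} $\alpha_i$, which already contains every earlier factor. So if $P_t$ is the cumulative factor and $L$ bounds the sizes of the data and of the Step-3 values, the naive estimate is $\operatorname{size}(P_t)\le 2\operatorname{size}(P_{t-1})+O(L)$, which is exponential in $n+d$. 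As stated, your structural claim is also not quite accurate: the reciprocal of a Step-3 value enters.

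\textbf{The missing idea is the telescoping identity the paper uses.} Let $\alpha'$ be the vector after Step~3. Every coordinate outside $C_0$ is its value after Step~3 times a common factor $P$. After an iteration with $\lambda=M_k\bm{a}^k_i/\alpha_i$, one has $P=M_k\bm{a}^k_i/\alpha'_i$. Because $k\in C_0$, the numerator is a product of at most $n+d$ data ratios times a data entry. Hence every number is a Step-3 value times such a product, divided by a Step-3 value, no matter how many iterations have occurred.

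\textbf{A smaller point: node $0$ after Step~3.} You allow node $0$ to be isolated and read the arcs of $G(\alpha)$ through the stored $M$. The algorithm defines $G(\alpha)$ through $M(\alpha)$, and the two readings agree only if $M_0(\alpha')=1$. The isolated case is in fact vacuous. Scaling $\alpha'$ and all $M'_k$ with $k\in N$ by $1+\varepsilon$ is exactly your Step-4 move with $C_0=\{0\}$. It stays feasible and raises $\phi$ by $\varepsilon(\theta+1-\sum_{k\in N}\bar z_k)>0$, contradicting optimality. This is how the paper shows the point is already saturated after Step~3. You should record this rather than let Step~4 ``repair'' it.

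Your positivity argument is a valid alternative to the paper's: $M^1_k=0$ would force $\alpha^1=0$ and hence $\theta\le0$.
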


\begin{proof}
The non-vertical inequalities describe facets of $\D$ (see \cite[Theorems 4 and 5]{QL_DAM2025}),
so we may assume that $\bar z\ge0$ and $\sum_{k\in N}\bar z_k\le1$.

\emph{Step 2 is well defined.} The polyhedron $\mathcal L$ is nonempty, because it
contains the point with $\alpha=0$, $M_0=1$, and $M_k=0$ for $k\in N$. Every feasible
point satisfies $0\le\alpha_i\le \bm{a}_i^0$\,, and, because $\bar z\ge0$,
$\phi(\alpha,M)\le\sum_{i\in[d]}c_i \bm{a}_i^0+\sum_{k\in N}\bar z_k-1$. Moreover, $\mathcal L$
contains no line, because all of its coordinates are nonnegative. Hence the maximum is
attained at a vertex of $\mathcal L$.

\emph{The case $\theta\le0$.} By the remark preceding the algorithm, every normalized
support inequality, and hence every vertical facet inequality, is satisfied by
$(\bar x,\bar z)$. The non-vertical inequalities are satisfied as well, so
$(\bar x,\bar z)\in\D$ by Theorem~\ref{thm:completedescrip}.

From now on, suppose that $\theta>0$. We say that $(\alpha,M)\in\mathcal L$ is
\emph{saturated} if
\begin{equation}\label{eq:separation-saturation}
\alpha_i=\min_{k\in N_0}M_k \bm{a}_i^k\quad(i\in[d]),
\qquad
M_k = \max_{i\in[d]}\frac{\alpha_i}{\bm{a}_i^k}\quad(k\in N_0).
\end{equation}
For a saturated point, $M=M(\alpha)$, and the constraints $\alpha_i\le M_k \bm{a}^k_i$ that are
tight are exactly those with $(k,i)\in E(\alpha)$.

\emph{Step 3 gives an optimal saturated point.} Let $\alpha'$ and $M'$
denote the vectors computed in Step~3.

\emph{Raising $\alpha$.} Because $(\alpha^1,M^1)\in\mathcal L$, we have
$\alpha^1_i\le M^1_k \bm{a}^k_i$ for all $k$, so
$\alpha'_i=\min_kM^1_k \bm{a}^k_i\ge\alpha^1_i$\,. 
Thus $\alpha'\ge\alpha^1\ge0$, so $(\alpha',M^1)\in\mathcal L$, and, because $c\ge0$,
$\phi(\alpha',M^1)-\phi(\alpha^1,M^1)=\sum_i c_i(\alpha'_i-\alpha^1_i)\ge0$;
hence $(\alpha',M^1)$ is optimal.

\emph{Lowering $M$.} For $k\in N$, taking $k$ in the minimum defining
$\alpha'_i$ gives $\alpha'_i\le M^1_k \bm{a}^k_i$\,, so $M'_k =\max_{i\in[d]}\frac{\alpha'_i}{\bm{a}_i^k}\leq M^1_k$\,. The
point $(\alpha',M')$ is feasible: the constraints with $k\in N$ hold by
the definition of $M'_k$\,, and those with $k=0$ hold because
$\alpha'_i\le M^1_0 \bm{a}^0_i=\bm{a}^0_i$\,. Because $\bar z\geq0$ and $\phi\left(\alpha^{\prime}, M^{\prime}\right)-\phi\left(\alpha^{\prime}, M^1\right)=\sum_{k \in N}\left(M_k^1-M_k^{\prime}\right) \bar{z}_k \geq 0$, it is optimal as
well. In particular, the constraints with $k=0$ give $M_0(\alpha') = \max_i\alpha'_i/\bm{a}^0_i \leq 1$.

\emph{Normalization and the second identity.} We claim that $M_0(\alpha')=1$. Otherwise every
constraint $\alpha'_i\le \bm{a}^0_i$ is strict. Then, for sufficiently small
$\varepsilon>0$, multiplying $\alpha'$ and all $M'_k$ with $k\in N$ by
$1+\varepsilon$, while keeping $M_0=1$, gives a feasible point: the
constraints with $k\in N$ are scaled on both sides, and those with
$k=0$ survive by strictness. Because $\phi(\alpha',M')=\theta$, we have
$\sum_ic_i\alpha'_i-\sum_{k\in N}M'_k\bar z_k
=\theta+1-\sum_{k\in N}\bar z_k$\,, so this change increases $\phi$ by
\[
  \varepsilon\Bigl(\theta+1-\sum_{k\in N}\bar z_k\Bigr)>0,
\]
because $\theta>0$ and $\sum_{k\in N}\bar z_k\le1$. This contradicts
optimality. Hence $M'_k=\max_i\alpha'_i/\bm{a}^k_i$ for every $k\in N_0$\,,
which is the second identity in~\eqref{eq:separation-saturation}.

\emph{The first identity.} Fix $i\in[d]$ and choose $k^*$ attaining
$\alpha'_i=M^1_{k^*}\bm{a}^{k^*}_i$\,. Then
\[
  M^1_{k^*}\;\ge\;M'_{k^*}\;\ge\;\alpha'_i/\bm{a}^{k^*}_i\;=\;M^1_{k^*}\,,
\]
by $M'\le M^1$ (if $k^*=0$, both sides equal $1$) and by the second
identity. Hence $M'_{k^*}=M^1_{k^*}$\,, and
$\alpha'_i=M'_{k^*}\bm{a}^{k^*}_i\ge\min_kM'_k \bm{a}^k_i\ge\alpha'_i$\,, the last
inequality by feasibility. Thus $(\alpha',M')$ is an optimal saturated
point.

\emph{Positivity.} Because $M_0(\alpha')=1$, some $\alpha'_i>0$, so every
$M'_k=\max_i\alpha'_i/\bm{a}^k_i>0$, and then the first identity gives
$\alpha'>0$.

\emph{Step~4 preserves optimality and saturation, and terminates.}
Consider an iteration of Step~4 at an optimal saturated point $(\alpha,M)$
with $\alpha>0$. Because Step~4 is executed, $G(\alpha)$ is disconnected.
For $\mu>0$, let $(\alpha(\mu),M(\mu))$ be obtained by multiplying
$\alpha_i$ for $i\in[d]\setminus C_0$ and $M_k$ for $k\in N_0\setminus C_0$
by $\mu$. Because $0\in C_0$\,, the normalization $M_0=1$ is unchanged.

\emph{Feasibility.} Consider a constraint $\alpha_i\le M_k \bm{a}^k_i$\,. If $k$
and $i$ both lie in $C_0$\,, or both lie outside, the constraint is
unaffected or scaled on both sides. Otherwise $(k,i)\notin E(\alpha)$,
because an arc of $E(\alpha)$ cannot join $C_0$ to another component, so the
constraint is strict at $(\alpha,M)$. If $k\notin C_0$ and $i\in C_0$\,, its
right-hand side is multiplied by $\mu$, so it holds for all $\mu\ge1$;
strictness gives $\alpha_i/(M_k \bm{a}^k_i)<1$, so it also holds for $\mu$
sufficiently close to $1$ from below. If $k\in C_0$ and $i\notin C_0$\,, it
holds if and only if $\mu\le M_k \bm{a}^k_i/\alpha_i$\,, and this bound exceeds
$1$ by strictness. Pairs of the latter kind exist: $0\in C_0$\,, and every
component other than $C_0$ contains a node of $[d]$, because by saturation
every $k\in N_0$ is incident to an arc of $E(\alpha)$. Hence $\lambda>1$,
and $(\alpha(\mu),M(\mu))\in\mathcal{L}$ for all
$\mu\in[1-\varepsilon,\lambda]$ for some $0<\varepsilon<1$.

\emph{Optimality.} Each coordinate of $(\alpha(\mu),M(\mu))$ is either
constant or proportional to $\mu$, so $\phi(\alpha(\mu),M(\mu))$ is affine
in $\mu$. Because this segment lies in $\mathcal{L}$ and $(\alpha,M)$ is
optimal, $\phi$ attains its maximum over $[1-\varepsilon,\lambda]$ at the
interior point $\mu=1$. Hence it is constant, and $(\alpha(\lambda),M(\lambda))$
is optimal.

\emph{Saturation.} Every arc of $E(\alpha)$ joins two nodes on the same
side of $C_0$\,, so its constraint is scaled on both sides and remains tight.
Together with feasibility at $\mu=\lambda$, this gives equality in both
identities of~\eqref{eq:separation-saturation}, so the new point is
saturated, and $\alpha(\lambda)>0$.

\emph{Progress.} Because $E(\alpha)\subset E(\alpha(\lambda))$, and an arc
$(k,i)$ with $k\in C_0$ and $i\notin C_0$ attaining $\lambda$ becomes tight,
the component containing $0$ contains $C_0$ together with the component of
$i$. Thus it gains at least one node in each iteration. Because $G(\alpha)$
has $n+d+1$ nodes, Step~4 terminates after at most $n+d$ iterations.

\emph{The output is facet describing and maximally violated.} At the end of Step 4, the
point $(\alpha,M)$ is optimal and saturated, with $\alpha>0$, $M=M(\alpha)$, and $G(\alpha)$
connected. So $F(\alpha,s)$ is a normalized support inequality, and it is facet describing
by Theorem~\ref{thm:connectedness}. Because $s_i\bar x_i=|\bar x_i|$, its violation is
$\phi(\alpha,M)=\theta>0$, and, by the remark preceding the algorithm, every normalized
support inequality has violation at most $\theta$.

\emph{Running time.} Step 2 is one linear program with $d+n+1$ variables and $O(nd)$
constraints, and a basic optimal solution can be computed in polynomial time, with
polynomial encoding length. Steps 3 and 4 use $O((n+d)\,nd)$ arithmetic operations. It
remains to bound the encoding lengths in Step 4. During Step 4, the coordinates indexed
by the nodes of $C_0$ are determined by a spanning tree of the component, through
$M_0=1$ and the relations $\alpha_i=M_k \bm{a}^k_i$ on its arcs, exactly as in the definition
of $M^T$ in Section \ref{sec:spanning}; so each is a product of at most $n+d$ ratios of
entries of $\bm{a}$, of polynomial encoding length. Each coordinate outside $C_0$ equals its
value after Step 3 times the product of the factors $\lambda$ used so far, and, after an
iteration in which $\lambda=M_k \bm{a}^k_i/\alpha_i$\,, this product equals the ratio of the
value $M_k \bm{a}^k_i$ (indexed by $C_0$) to the value of $\alpha_i$ after Step 3. By induction,
all numbers have polynomial encoding length.
\end{proof}

\section{Computational experiments}\label{sec:experiments}

We measure how much of the relaxation gap is closed by the facets of $\D$ that are not
pairwise inequalities, extending the experiments of
\cite[Section 5]{qu2026disjunctionconvexhullsgeneralized}.

\medskip
\noindent {\bf Instances.}
For each $(d,n)$ with $3\leq d\leq12$ and $2\leq n\leq20$, we generate five random
instances, $950$ in total. For $k\in N_0$\,, the entries $\bm{a}^k_i$ are drawn independently
and uniformly from $[1,4]$ (so the ratios $\bm{a}^{k_1}_i/\bm{a}^{k_2}_i$ are distinct with
probability one), the center $t^k$ is drawn uniformly from the unit sphere in
$\mathbb{R}^d$, and
$\Po_k:=\{x\in\mathbb{R}^d:\sum_{i\in[d]}\bm{a}^k_i|x_i-t^k_i|\leq1\}$; by
Proposition \ref{prop:translations}, our results apply to these translated polytopes. We
draw $c\in\mathbb{Z}^d$ uniformly from $[-d,d]^d$ (with at least two nonzero entries),
let $v_k:=\max\{c^\top x:x\in\Po_k\}=c^\top t^k+\max_{i\in[d]}|c_i|/\bm{a}^k_i$\,, and set
$g_k:=v_0-v_k$ for $k\in N$, so that the objectives $g_k+c^\top x$ of the $n+1$ modes
compete. As side constraints, we trim from each $\Po_k$ its best vertex and the reflection
of that vertex: with $i_k\in\argmax_{i\in[d]}|c_i|/\bm{a}^k_i$ and $\beta_k:=1/(2 \bm{a}^k_{i_k})$, let
$\Po^+_k:=\Po_k\cap\{x:|x_{i_k}-t^k_{i_k}|\leq\beta_k\}$. The target value is
\[
\mathfrak{z}(\mathcal{P}):=\max_{k\in N_0}\ \bigl\{g_k+\max\{c^\top x:x\in\Po^+_k\}\bigr\},
\]
with $g_0:=0$, which is available in closed form.

\medskip
\noindent {\bf Relaxations.}
We compare the optimal values of $\max\{c^\top x+g^\top z\}$ over the following
relaxations, each of which includes the non-vertical inequalities $z_k\ge0$ and
$\sum_{k\in N}z_k\le1$.
\begin{itemize}
\item $\mathcal{L}$: the pairwise inequalities with $r^{k_1\,,k_2}_j$ equal to
$\min_i r^{k_1\,,k_2}_i$ or $\max_i r^{k_1\,,k_2}_i$\,, that is, the full optimal big-M
liftings of the facet-describing inequalities of the $\Po_k$ (see \cite{qu2026disjunctionconvexhullsgeneralized});
\item $\mathcal{F}$: all pairwise inequalities (Section \ref{sec:prior});
\item $\mathcal{D}$: the complete description of Theorem \ref{thm:completedescrip}.
\end{itemize}
We also consider $\mathcal{L}^{\mathrm s}$, $\mathcal{F}^{\mathrm s}$ and
$\mathcal{D}^{\mathrm s}$, obtained by adding lifted versions of the trimming constraints.
For $k,l\in N_0$\,, let $\gamma_{k,l}:=\max\{|x_{i_k}-t^k_{i_k}|:x\in\Po^+_l\}$, which
equals $|t^l_{i_k}-t^k_{i_k}|+\rho_{l}$\,, where $\rho_l:=1/\bm{a}^l_{i_k}$\,, halved if
$i_k=i_l$\,; in particular, $\gamma_{k,k}=\beta_k$\,. The lifted constraint
\[
|x_{i_k}-t^k_{i_k}|\ \leq\ \gamma_{k,0}+\sum_{l\in N}\left(\gamma_{k,l}-\gamma_{k,0}\right)z_l
\]
is valid for $\conv\bigcup_{l\in N_0}\bar{\Po}^+_l$ and reduces to the trimming
constraint when $z=\mathbf{e}_k$\,. Following \cite{qu2026disjunctionconvexhullsgeneralized},
we report, for each relaxation $X$, the fraction of the gap left by $\mathcal{L}$ that $X$
closes,
\[
1-\frac{\mathfrak{z}(X)-\mathfrak{z}(\mathcal{P})}{\mathfrak{z}(\mathcal{L})-\mathfrak{z}(\mathcal{P})}\,.
\]

\medskip
\noindent {\bf Implementation.}
All inequalities are written in the original (translated) coordinates using the affine
map of Proposition \ref{prop:translations}, and absolute values are handled with
auxiliary variables $w_i\geq|x_i-t^0_i-\sum_{k\in N}z_k(t^k_i-t^0_i)|$, so that each
inequality $\sum_{i}\alpha_i w_i+\sum_{k\in N}(1-M_k)z_k\leq1$ represents all $2^d$ sign
vectors at once. The relaxations $\mathcal{D}$ and $\mathcal{D}^{\mathrm s}$ are computed
by cutting planes: starting from $\mathcal{F}$ (respectively $\mathcal{F}^{\mathrm s}$), we
repeatedly solve the linear program, apply the separation algorithm of
Section \ref{sec:sep} to its optimal solution, and add the returned inequality, until no
inequality is violated by more than $10^{-7}$ or the bound is within a relative
$10^{-6}$ of the exact value (computed with the extended formulation described below).
The linear program in Step 2 of the separation algorithm is solved by the simplex method,
which returns a basic optimal solution, and Steps 3 and 4 are carried out exactly as
stated. As a check, we also solved each instance
over Balas's extended formulation of $\D$ (with and without the side constraints): the
cutting-plane values agree with it to within a relative $6\cdot10^{-7}$, and every one
of the $12{,}964$ inequalities returned by the separation algorithm had a connected
equality graph. The experiments were implemented in Python using HiGHS~1.15.1 (simplex
method) and run on a single core of an Intel Xeon processor at 2.80\,GHz.

\medskip
\noindent {\bf Results.}
Figure \ref{fig:gapclosed} shows the mean fraction of gap closed over the five instances
in each cell. We make the following observations.
\begin{itemize}
\item The non-pairwise facets matter. Averaged over all instances, $\mathcal{F}$ closes
$68.6\%$ of the gap left by $\mathcal{L}$, whereas $\mathcal{D}$ closes $90.6\%$ (median
$95.9\%$). Equivalently, the facets that are not pairwise close on average $67\%$ (median
$84\%$) of the gap left by $\mathcal{F}$. The value of $\mathcal{D}$ is strictly better
than that of $\mathcal{F}$ on $92\%$ of the instances with $d\geq4$.
\item For $d=3$, the values of $\mathcal{F}$ and $\mathcal{D}$ coincide on every instance,
as predicted by Corollary \ref{cor:d3}.
\item The benefit of the complete description grows with both $d$ and $n$. The mean
difference between the gap closed by $\mathcal{D}$ and by $\mathcal{F}$ increases from
$0.11$ for $d=4$ to $0.32$ for $d=12$, and from $0.09$ for $n=2$ to about $0.25$ for
$n\geq10$. While $\mathcal{F}$ deteriorates as $d$ grows, $\mathcal{D}$ does not.
\item The picture is unchanged in the presence of side constraints: $\mathcal{L}^{\mathrm s}$
closes only $12\%$ of the gap, $\mathcal{F}^{\mathrm s}$ closes $71\%$, and
$\mathcal{D}^{\mathrm s}$ closes $92\%$ (median $97\%$).
\item Few facets are needed. Table \ref{tab:cuts} shows that the cutting-plane loop adds
at most $51$ inequalities (on average at most $14$ for each $d$) and takes well under a
second, although $\D$ can have up to $n+1+2^d\binom{n+d-1}{n}$ facets
(Theorem \ref{thm:facetcount}), about $3.5\cdot10^{11}$ for $d=12$ and $n=20$.
\end{itemize}
The gap that remains after $\mathcal{D}^{\mathrm s}$ is due to the trimming: closing it
would require the convex hull of the union of the trimmed polytopes $\Po^+_k$\,, which are
no longer generalized cross polytopes.

\begin{figure}[p]
\centering
\includegraphics[width=\textwidth]{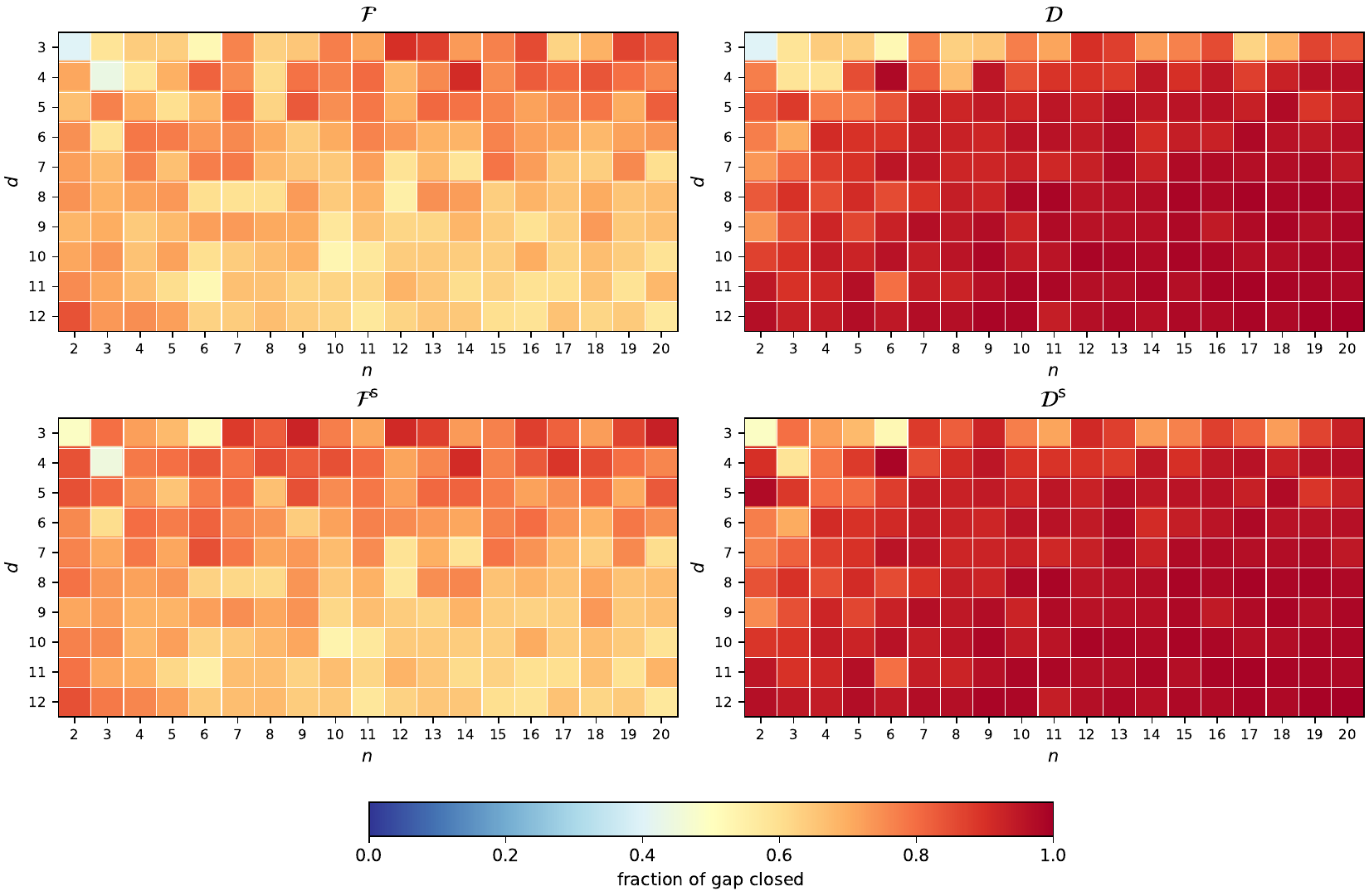}
\caption{Mean fraction of the gap left by $\mathcal{L}$ that is closed by $\mathcal{F}$,
$\mathcal{D}$, $\mathcal{F}^{\mathrm s}$ and $\mathcal{D}^{\mathrm s}$, over five
instances for each $(d,n)$.}
\label{fig:gapclosed}

\vspace{1.5\baselineskip}
\captionof{table}{Cutting-plane statistics by $d$, over all $n$ and instances; $|\mathcal{F}|$ is the
average number of pairwise inequalities, and the times are for the loop for $\mathcal{D}$.}
\label{tab:cuts}
\begin{tabular}{@{}rrrrrrrr@{}}
\toprule
 & & \multicolumn{2}{c}{cuts for $\mathcal{D}$} & \multicolumn{2}{c}{cuts for $\mathcal{D}^{\mathrm s}$} & \multicolumn{2}{c}{time (s)}\\
\cmidrule(lr){3-4}\cmidrule(lr){5-6}\cmidrule(l){7-8}
$d$ & $|\mathcal{F}|$ & avg & max & avg & max & avg & max\\
\midrule
3 & 93 & 0.0 & 0 & 0.0 & 0 & 0.004 & 0.012\\
4 & 174 & 1.6 & 11 & 1.6 & 13 & 0.011 & 0.093\\
5 & 255 & 3.3 & 10 & 3.1 & 11 & 0.019 & 0.062\\
6 & 336 & 4.4 & 14 & 4.2 & 15 & 0.029 & 0.164\\
7 & 417 & 6.4 & 20 & 6.1 & 21 & 0.041 & 0.164\\
8 & 498 & 8.0 & 28 & 8.1 & 34 & 0.055 & 0.235\\
9 & 579 & 8.5 & 27 & 8.4 & 30 & 0.065 & 0.268\\
10 & 660 & 10.8 & 34 & 10.6 & 34 & 0.081 & 0.324\\
11 & 741 & 12.1 & 42 & 12.0 & 37 & 0.108 & 0.436\\
12 & 822 & 13.7 & 51 & 13.6 & 48 & 0.126 & 0.543\\
\bottomrule
\end{tabular}
\end{figure}

\FloatBarrier

\medskip
\noindent {\bf Declaration of generative AI and AI-assisted technologies in the manuscript preparation process.}
During the preparation of this work, the authors used Claude to assist in developing and checking
the work. The authors take full responsibility for the content of the article.

\setlength{\bibsep}{0pt}
\bibliographystyle{elsarticle-num-names}
\bibliography{Bib}
\end{document}